\theoremstyle{plain}
\newtheorem{prop}{Proposition}[section]
\newtheorem{lemm}[prop]{Lemma}
\newtheorem{theo}[prop]{Theorem}
\newtheorem{coro}[prop]{Corollary}
\newtheorem{folk}[prop]{Folklore}
\theoremstyle{definition}
\newtheorem{definition}[prop]{Definition}
\newcommand{\Ord}{\mathsf{Ord}}
\newcommand{\LOrd}{\mathsf{LOrd}}
\newcommand{\COrd}{\mathsf{COrd}}
\newcommand{\UCF}{\mathtt{U}}
\newcommand{\Ns}{\mathscr{N}}
\newcommand{\ncplus}{\boxplus}
\newcommand{\IAL}{\mathbf{ACT}_{\omega}}
\newcommand{\EIAL}{{!} \IAL}
\newcommand{\EIALminus}{\EIAL^{-}}
\newcommand{\EIALm}{\EIAL^{\mathrm{m}}}
\newcommand{\EIALne}{\EIAL^{\mathrm{ne}}}
\newcommand{\GEIALminus}{{!} \underline{\mathbf{ACT}}_{\omega}^-}
\newcommand{\dEIALm}{{!}_{\mathrm{d}} \IAL^{\mathrm{m}}}
\newcommand{\dEIALne}{{!}_{\mathrm{d}} \IAL^{\mathrm{ne}}}
\newcommand{\GdEIALne}{{!}_{\mathrm{d}} \underline{\mathbf{ACT}}_\omega^{\mathrm{ne}}}
\newcommand{\BS}{\mathbin{\backslash}}
\newcommand{\SL}{\mathbin{/}}
\newcommand{\mconj}{\cdot}
\newcommand{\aconj}{\mathbin{\&}}
\newcommand{\adisj}{\oplus}
\newcommand{\yields}{\Rightarrow}
\newcommand{\U}{1}
\newcommand{\Z}{0}
\newcommand{\Hc}{\mathcal{H}}
\newcommand{\KA}{\mathbf{K}}
\newcommand{\AL}{\mathbf{A}}
\newcommand{\Mf}{\mathcal{M}}
\newcommand{\ID}{(\mathrm{Id})}
\newcommand{\CUT}{(\mathrm{Cut})}
\newcommand{\Var}{\mathrm{Var}}
\newcommand{\dom}{\mathop{\mathrm{dom}}}
\DeclareMathOperator*{\bigdoublewedge}{\bigwedge\mkern-18mu\bigwedge}
\DeclareMathOperator*{\bigdoublevee}{\bigvee\mkern-18mu\bigvee}
\newcommand{\VARlt}{\mathit{lt}}
\newcommand{\VARrt}{\mathit{rt}}
\newcommand{\VARok}{\mathit{ok}}
\newcommand{\VARen}{\mathit{en}}
\newcommand{\VARfn}{\mathit{fn}}
\newcommand{\VARgo}{\mathit{go}}
\newcommand{\VARex}{\mathit{ex}}
\newcommand{\VARfl}{fl}
\newcommand{\VARSigma}{p_{\exists}}
\newcommand{\VARPi}{p_{\forall}}
\newcommand{\VARsym}{p}
\newcommand{\FOROk}{\mathrm{Ok}}
\newcommand{\FORBrk}{\mathrm{Brk}}
\newcommand{\FOREn}{\mathrm{En}}
\newcommand{\FORCmp}{\mathrm{Cmp}}
\newcommand{\TM}{\mathtt{M}}
\newcommand{\qbgn}{q^{(0)}}
\newcommand{\qacc}{q^{(a)}}
\newcommand{\SR}{\mathtt{S}}
\newcommand{\fun}{\mathfrak{f}}
\newcommand{\bsc}{\mathrm{bsc}}
\newcommand{\IALbsc}{\IAL^{\bsc}(\SR)}
\newcommand{\SFm}{\mathrm{SFm}}
\newcommand{\ips}{\mathrm{ips}}
\begin{document}

\title[Reasoning from hypotheses in $\ast$-continuous action lattices]{Reasoning from hypotheses\\ in $\ast$-continuous action lattices}

\author[S.\,L.\ Kuznetsov]{Stepan L.\ Kuznetsov}
\revauthor{Kuznetsov, Stepan L.}

\author[T.\ Pshenitsyn]{Tikhon Pshenitsyn}
\revauthor{Pshenitsyn, Tikhon}

\author[S.\,O.\ Speranski]{Stanislav O.\ Speranski}
\revauthor{Speranski, Stanislav O.}

\address{Steklov Mathematical Institute\\ of Russian Academy of Sciences,\\ Moscow, Russia}

\email{sk@mi-ras.ru, tpshenitsyn@mi-ras.ru, katze.tail@gmail.com}

\date{\today}

%%%

\begin{abstract}
The class of all $\ast$-continuous Kleene algebras, whose description includes an infinitary condition on the iteration operator, plays an important role in computer science. The complexity of reasoning in such algebras --- ranging from the equational theory to the Horn one, with restricted fragments of the latter in between --- was analyzed by Kozen (2002). This paper deals with similar problems for $\ast$-continuous residuated Kleene lattices, also called $\ast$-con\-ti\-nu\-o\-us action lattices, where the product operation is augmented by  residuals. We prove that, in the presence of residuals, the fragment of the corresponding Horn theory with $\ast$-free hypotheses has the same complexity as the $\omega^\omega$ iteration of the halting problem, and hence is properly hyperarithmetical. We also prove that if only commutativity conditions are allowed as hypotheses, then the complexity drops down to $\Pi^0_1$ (i.e.\ the complement of the halting problem), which is the same as that for $\ast$-continuous Kleene algebras. In fact, we get stronger upper bound results: the fragments under consideration are translated into suitable fragments of infinitary action logic with exponentiation, and our upper bounds are obtained for the latter ones.
\end{abstract}

\keywords{Kleene algebra, action lattice, infinitary action logic, exponentiation, complexity, hyperarithmetical hierarchy}

\subjclass{03B47, 03B70, 03D35, 03F07, 03F52}

\maketitle

%%%

\section{Introduction}

The notion of Kleene algebra is one of the fundamental notions in computer science. It arose in various forms in formal language theory, automata theory, program semantics, relational algebra, etc. The formal definition was given by Kozen~\cite{Kozen1994}. Kleene algebras and their variations are widely used in reasoning about program correctness, as an efficient tool for specification and verification. 

The language of Kleene algebras includes two binary operations: $\mconj$ (product) and $\adisj$ (join, or sum), two constants: $\Z$ (zero) and $\U$ (unit), and, most importantly, the unary operation ${}^{\ast}$ of iteration, also called \emph{Kleene star}. The latter goes back to the seminal work of Kleene~\cite{Kleene1956}.

\begin{definition}\label{Df:KA}
A {\em Kleene algebra} is a structure $\KA = (K; \mconj, \adisj, {}^*, 0, 1)$
such that:
\begin{itemize}
\item $(K; \adisj, \mconj, 0, 1)$ is a semiring;
\item $a \adisj a = a$ for all $a \in K$ (idempotency);
\item for any $a,b \in K$,
\begin{align*}
    & 1 \adisj a \mconj a^* \leqslant a^* ; && 1 \adisj a^* \mconj a \leqslant a^*; \\
    & \mbox{if } a \mconj b \leqslant b,\mbox{ then } a^* \mconj b \leqslant b;  &&
    \mbox{if } b \mconj a \leqslant b,\mbox{ then } b \mconj a^* \leqslant b.
\end{align*}
\end{itemize}
Here $\leqslant$ denotes the partial ordering of $K$ given by: $a \leqslant b$ if{f} $a \adisj b = b$.
\end{definition}

Obviously, the class of all Kleene algebras is finitely axiomatizable in first-or\-der logic, and hence its first-order theory is computably enumerable --- or, in other words, $\Sigma^0_1$.

The {\em equational} theory of Kleene algebras, i.e.\ the set of all equations $A = B$ (where $A$ and $B$ are terms) that are true in all Kleene algebras, is known to be algorithmically decidable. More precisely, two terms are equivalent in all Kleene algebras if{f} the corresponding regular expressions describe the same language~\cite{Kozen1994}, and the latter problem is well-known to be $\mathsf{PSPACE}$-comp\-lete.

However, already a very simple fragment of the Horn theory of Kleene algebras turns out to be undecidable. Namely, as was proved in \cite{Kuzn2023ICTAC}, there exists a finite set $S$ of four {commutativity  conditions} such that the set of equations entailed by $S$  is $\Sigma^0_1$-complete, i.e., is equivalent to the halting problem.\footnote{By
\emph{commutativity conditions} we mean equations of the form $x \cdot y = y \cdot x$ where $x$ and $y$ are variables.}
Independently, a similar result was obtained in \cite{AzevedoAmorim2024}. Therefore, the Horn theory and the first-order theory of Kleene algebras are both $\Sigma^0_1$-complete.

A much more interesting complexity landscape opens up for a narrower class of Kleene~algebras, which is characterized by a natural infinitary condition:

\begin{definition}\label{Df:contKA}
A Kleene algebra is {\em $*$-con\-ti\-nu\-ous} if{f} for any $a, b, c \in K$ we have
\[
{b \cdot a^* \cdot c}\ =\
{\sup_{\leqslant} \{ b \cdot a^n \cdot c \mid n \geqslant 0 \}} .
\]
\end{definition}

The class of all $\ast$-continuous Kleene algebras, unlike that of all Kleene algebras, cannot be axiomatized in first-order logic. So its first-order theory may be far from $\Sigma^0_1$.
Still, as shown in \cite{Kozen1994}, the equational theory of $*$-con\-ti\-nu\-ous Kleene algebras coincides with that of arbitrary Kleene algebras, and hence is decidable. As for Horn theories (entailment from hypotheses), we have the following complexity results from \cite{Kozen2002}:

\begin{center}
\renewcommand{\arraystretch}{1.2}
\begin{tabular}{|c|c|} \hline
{\bfseries Hypotheses} & {\bfseries Complexity} \\ \hline\hline
none (equational theory) & $\mathsf{PSPACE}$-comp\-lete \\ \hline
commutativity conditions ($x \mconj y = y \mconj x$) & \vphantom{\Large A} $\Pi^0_1$-complete \\ \hline
$*$-free ($A = B$, where $A$ and $B$ do not include ${}^*$) & \vphantom{\Large A} $\Pi^0_2$-complete \\\hline
arbitrary & \vphantom{\Large A} $\Pi^1_1$-complete \\\hline
\end{tabular}
\end{center}
Note that the results stated in the last two lines also hold for commutative $*$-con\-ti\-nu\-ous Kleene algebras; see \cite{Kuzn2024Izv}.

In this article, we consider theories of {residuated Kleene lattices}, also called \emph{action lattices}. These structures arise in the works of Pratt~\cite{Pratt1991} and Kozen~\cite{Kozen1994act}. Briefly, they are obtained from Kleene algebras by adding meet and two residuals for the product operation.

\begin{definition}\label{Df:action}
An \emph{action lattice}, or a \emph{residuated Kleene lattice}, is a structure $\AL = (A; \mconj, \adisj, \aconj, {\BS}, {\SL}, {}^*, \Z, \U)$ such that:
\begin{enumerate}
\item $(A; \cdot, \adisj, {}^*, \Z, \U)$ is a Kleene algebra;
\item $(A; \adisj, \aconj)$ is a lattice (where $\adisj$ is join and $\aconj$ is meet); 
\item $\BS$ and $\SL$ are residuals of the product operation w.r.t.\ the partial ordering:
\[
{b \leqslant a \BS c}\ \iff\ {a \mconj b \leqslant c}\ \iff\ {a \leqslant c \SL b} .
\]
\end{enumerate}
\end{definition}

It should be noted that residuals are division operations associated with the partial ordering rather than equality. The idea of residuation in algebra and logic goes back to the works of Krull \cite{Krull1924}, Ward and Dilworth \cite{WardDilworth1939}, and Lambek \cite{Lambek1958}. For more information about the connections between residuated algebraic structures and substructural logics, one may consult \cite{Galatos}.

The class of all action lattices is finitely axiomatizable in first-order, and hence its first-order theory belongs to $\Sigma^0_1$. This bound is precise, since the equational theory of action lattices is $\Sigma^0_1$-hard \cite{Kuzn2021TOCL}.

\begin{definition}\label{Df:action_cont}
An action lattice $\AL = (A; \mconj, \adisj, \aconj, \BS, \SL, {}^*, \Z, \U)$ is
\emph{$*$-con\-ti\-nu\-ous} if{f} so is the underlying Kleene algebra $\KA = (A; \mconj, \adisj, {}^*, \Z, \U)$.
\end{definition}

It is known that the equational theory of $*$-con\-ti\-nu\-ous action lattices is $\Pi^0_1$-comp\-lete, i.e., is equivalent to the complement of the halting problem; see \cite{Buszkowski2007} and \cite{Palka2007}. On the other hand, the corresponding Horn theory, with no restrictions on hypotheses, is $\Pi^1_1$-comp\-lete: the lower bound here is inherited from $*$-con\-ti\-nu\-ous Kleene algebras, and the upper one is established by a very general infinitary proof-theoretic argument presented in~\cite{KuznetsovSperanski2022}.

We are going to focus on restricted fragments of the Horn theory of $*$-con\-ti\-nu\-ous action lattices: the fragment with $*$-free hypotheses and its smaller subfragments in which hypotheses are required to be \emph{monoidal inequations}. Recall that the latter are expressions of the form
\[
{b_1 \mconj \ldots \mconj b_n}\ \leqslant\ {c_1 \mconj \ldots \mconj c_m},
\]
where the $b_i$'s and the $c_j$'s are variables.\footnote{Here
we assume that the empty product is identified with $1$.}
A \emph{non-expanding monoidal inequation} is one where $m \leqslant n$; we also call them \emph{non-expanding hypotheses}. Obviously, any commutativity condition $b \mconj c = c \mconj b$ can be represented as two non-expanding hypotheses, viz.\
\[
{b \mconj c} \leqslant {c \mconj b}
\quad \text{and} \quad
{c \mconj b} \leqslant {b \mconj c} .
\]
We shall prove that reasoning from non-expanding hypotheses --- and, in particular, from commutativity conditions --- for $\ast$-con\-ti\-nu\-ous action lattices has the same complexity as in the case of $\ast$-con\-ti\-nu\-ous Kleene algebras, i.e., is $\Pi^0_1$-complete. The lower bound here is inherited either from the equational theory of $\ast$-con\-ti\-nu\-ous action lattices or from reasoning from com\-mu\-ta\-ti\-vi\-ty conditions for $\ast$-con\-ti\-nu\-ous Kleene algebras (see above). However, the upper bound result here is new. In sharp contrast, if we allow arbitrary monoidal inequations, the complexity becomes hyperarithmetical, as we shall see below.

In fact, we obtain the $\Pi^0_1$-bounded\-ness for a richer system. To be more precise, let us extend {\em infinitary action logic}, i.e.\ the logic of $\ast$-con\-ti\-nu\-ous action lattices, denoted by $\IAL$, with the exponential modality ${!}$ from Girard's linear logic (see \cite{Girard1987}). This leads to {\em infinitary action logic with exponentiation}, introduced in \cite{KuznetsovSperanski2022} and denoted by $\EIAL$. Next, ${!}$ gives us a modalised version of deduction theorem, which  reduces reasoning from hypotheses in $\IAL$ to deriving sequents in $\EIAL$. Then we show the $\Pi^0_1$-boun\-ded\-ness of the restricted derivability problem for $\EIAL$, where ${!}$ can be applied only to formulae of the form
\[
(b_1 \mconj \ldots \mconj b_n) \BS (c_1 \mconj \ldots \mconj c_m)
\]
with $m \leqslant n$ --- naturally, they correspond to non-expanding hypotheses. A somewhat similar result was obtained earlier in~\cite{Kuzn2020ScFest}, where the ${!}$-for\-mu\-lae were required to be of the same form, but with the condition $m \leqslant n$ replaced by $m = 1$. However, this does~not suffice for our purposes, since commutativity conditions are not covered by such ${!}$-for\-mu\-lae.

Further, reasoning from $*$-free hypotheses in $\ast$-con\-ti\-nu\-ous action lattices corresponds to the fragment of $\EIAL$ where ${}^{\ast}$ may~not occur in the scope of ${!}$. It was proved earlier in \cite{Kuzn2021Tableaux} that this fragment is $\Pi^0_2$-hard and $\Delta^1_1$-bounded. We are going to obtain much stronger results, which allow us to pinpoint the exact complexity bound. More precisely, we prove that the derivability problem for the fragment in question is equivalent to the $\omega^\omega$ transfinite iteration of the halting problem, and hence is properly hyperarithmetical. In what follows, we shall refer to this level of complexity as \emph{$\Sigma^0_{\omega^\omega}$-comp\-lete\-ness}. Our usage here is not entirely standard, since the limit levels of the hyperarithmetical hierarchy are often defined using an auxiliary application of the Turing jump operator, which is related to the properties of Kleene's system of notation for constructive ordinals. As far as only ordinals below $\omega^\omega$ are concerned, a much simpler system of notation can be utilised, which leads to a more natural definition of the limit levels. The details are given in Section~\ref{subsec-ha}.

To be more precise, we prove the $\Sigma^0_{\omega^{\omega}}$-bounded\-ness for the fragment of $\EIAL$ where ${!}$ is applied only to $\ast$-free formulae, while the $\Sigma^0_{\omega^{\omega}}$-hard\-ness is obtained for reasoning from mo\-no\-i\-dal equations in $\ast$-con\-ti\-nu\-ous action lattices. This yields the same exact complexity bound for stronger theories: reasoning from arbitrary $\ast$-free hypotheses and deriving sequents in the corresponding fragment of $\EIAL$.

To sum up, the situation with $\ast$-con\-ti\-nu\-ous action lattices is significantly different from that with $\ast$-con\-ti\-nu\-ous Kleene algebras, where reasoning from $\ast$-free hypotheses is $\Pi^0_2$-comp\-le\-te in terms of the arithmetical hierarchy (see \cite{Kozen2002}). Moreover, as was proved in \cite{KuznetsovSperanski2022}, the derivability problem for $\EIAL$ without any restrictions is $\Pi^1_1$-comp\-lete --- like for the full Horn theory of $\ast$-con\-ti\-nu\-ous action lattices. So here is the residuated version of Kozen's table:
%\begin{center}
\renewcommand{\arraystretch}{1.2}
\begin{longtable}{|c|c|} \hline
{\bfseries Hypotheses} & {\bfseries Complexity} \\ \hline\hline
none (equational theory) &  $\Pi^0_1$-complete \\ \hline
non-expanding hypotheses, &  $\Pi^0_1$-complete \\[-2pt]
including commutativity conditions & \\ \hline
monoidal inequations & $\Sigma^0_{\omega^\omega}$-complete \\ \hline
$\ast$-free ($A = B$ where $A$ and $B$ do~not contain ${}^{\ast}$) & \vphantom{\Large A} $\Sigma^0_{\omega^\omega}$-complete \\\hline
arbitrary & $\Pi^1_1$-complete \\\hline
\end{longtable}
%\end{center}
In addition, the same complexity results hold for the corresponding fragments of $\EIAL$.

The rest of the paper is organised as follows. Section~\ref{S:prelim} contains some preliminary material: in Section~\ref{S:IAL}, we describe infinitary action logic $\IAL$ and its exponential extension $\EIAL$; Section~\ref{S:dyadic} presents an auxiliary dyadic calculus, which turns out to be useful for a systematic analysis of derivations; in Section~\ref{S:inductive}, we give necessary information on inductive de\-fi\-ni\-tions, which is needed for ordinal analysis of derivability  operators that arise in our study; Section~\ref{subsec-ha} describes the hyperarithmetical hierarchy up to $\omega^\omega$. In Section~\ref{sec-dyadic}, we prove some properties of the dyadic system introduced in Section~\ref{S:dyadic}.

The main technical results are presented in Sections~\ref{S:Tikhon}, \ref{S:Stanislav}, and~\ref{S:Stepan}. In Section~\ref{S:Tikhon} we show the $\Sigma^0_{\omega^\omega}$-hard\-ness of reasoning from monoidal equations in $\ast$-con\-ti\-nu\-ous action lattices. Next, in Section~\ref{S:Stanislav}, we prove that the restricted derivability problem for $\EIAL$ where ${!}$ can be applied only to $\ast$-free formulae is $\Sigma^0_{\omega^\omega}$-boun\-ded. So $\Sigma^0_{\omega^\omega}$ is indeed the exact complexity bound. This also leads to some natural results about closure ordinals. Finally, in Section~\ref{S:Stepan}, we establish the $\Pi^0_1$-boun\-ded\-ness of deriving sequents in the fragment of $\EIAL$ where ${!}$ can be applied only to formulae that represent non-expanding hypotheses. 

%The article is also equipped with an Appendix, which accurately presents the $\Sigma^0_{\omega^\omega}$-complete problem the reduction to which is used in Section~\ref{S:Tikhon} for proving the hyperarithmetical lower bound --- namely, the validity problem for computable infinitary propositional formulae of rank less than $\omega^\omega$.

%%%

\section{Preliminaries}\label{S:prelim}

\subsection{Infinitary action logic with exponentiation}\label{S:IAL}

Let us recall infinitary action logic $\IAL$ as formulated by Palka~\cite{Palka2007}. Formulae of $\IAL$, which are actually terms in the language of action lattices, are built from a countable set of variables $\Var$ and constants $\Z$ and $\U$ using five binary connectives: $\BS$, $\SL$, $\mconj$, $\adisj$, $\aconj$, and one unary connective ${}^*$ (written in the postfix form: $A^*$). Formulae are denoted by letters $A,B,C$, possibly with subscripts.  Greek letters $\Gamma, \Delta, \Pi, \Xi, \Upsilon$ (maybe with subscripts) are used for finite sequences of formulae (including the empty one).
{\em Sequents} are expressions of the form $\Pi \yields B$, where $B$ is a formula and $\Pi$ is a sequence of formulae. The rules of $\IAL$ are as follows.
\[
\infer[\ID]
{A \yields A}
{}
\]
\[
\infer[({\BS} L)]{\Gamma, \Pi, A \BS B, \Delta \yields C}
{\Pi \yields A & \Gamma, B, \Delta \yields C}
\qquad
\infer[({\BS} R)]{\Pi  \yields A \BS B}
{A, \Pi \yields B}
\]
\[
\infer[({\SL} L)]{\Gamma, B \SL A, \Pi, \Delta \yields C}
{\Pi \yields A & \Gamma, B, \Delta \yields C}
\qquad
\infer[({\SL} R)]{\Pi \yields B \SL A}
{\Pi, A \yields B}
\]
\[
\infer[(\mconj L)]{\Gamma, A \mconj B, \Delta \yields C}
{\Gamma, A, B, \Delta \yields C}
\qquad
\infer[(\mconj R)]{\Gamma, \Delta \yields A \mconj B}
{\Gamma \yields A & \Delta \yields B}
\]
\[
\infer[(\U L)]{\Gamma, \U, \Delta \yields C}{\Gamma, \Delta \yields C}
\qquad
\infer[(\U R)]{\yields\U}{}
\qquad
\infer[(\Z L)]{\Gamma,\Z,\Delta \yields C}{}
\]
\[
\infer[(\adisj L)]{\Gamma, A_1 \adisj A_2, \Delta \yields C}
{\Gamma, A_1, \Delta \yields C & \Gamma, A_2, \Delta \yields C}
\qquad
\infer[(\adisj R_i)\mbox{, $i = 1,2$}]
{\Pi \yields A_1 \adisj A_2}{\Pi \yields A_i}
\]
\[
\infer[({\aconj} L_i)\mbox{, $i = 1,2$}]
{\Gamma, A_1 \aconj A_2, \Delta \yields C}
{\Gamma, A_i, \Delta \yields C}
\qquad
\infer[({\aconj} R)]{\Pi \yields A_1 \aconj A_2}
{\Pi \yields A_1 & \Pi \yields A_2}
\]
\[
\infer[({}^* L_\omega)]
{\Gamma, A^*, \Delta \yields C}
{\bigl( \Gamma, A^n, \Delta \yields C \bigr)_{n \in \omega}}
\qquad
\infer[({}^* R_n),\ n \ge 0] %,\mbox{ each $\Pi_i$ is non-empty}]
{\Pi_1, \ldots, \Pi_n \yields A^*}
{\Pi_1 \yields A & \ldots & \Pi_n \yields A}
\]
\[
\infer[\CUT]
{\Gamma, \Pi, \Delta \yields C}
{\Pi \yields A & \Gamma, A, \Delta \yields C}
\]

\noindent

Rules without premises, i.e.\ $(\mathrm{Id})$, $(\U R)$, $(\Z L)$, and $({}^* R_0)$, are \emph{axioms.} Their conclusions can be derived without assuming anything.

It should also be noted that (${}^* L_{\omega}$) is the only rule with infinitely many premises, i.e.\ the only $\omega$-rule.  In the presence of an $\omega$-rule, a derivation is defined as an infinite, but \emph{well-founded} tree, whose vertices are labelled by sequents, such that leaves are labelled with axioms or hypotheses (if we derive from hypotheses) and the labels of inner vertices can be obtained from labels of their immediate children by rule applications. There is a well-known alternative definition of the set of derivable sequents~\cite[Proposition~2.2]{KuznetsovSperanski2022}: the set of derivable sequents coincides with the least (w.r.t.\ inclusion) set of sequents including all hypotheses (if any) and axioms and closed under the inference rules. For more information on infinitary derivations, see~\cite{Aczel,Buchholz,Pohlers}. 

Infinitary action logic enjoys a natural algebraic interpretation on $*$-con\-ti\-nu\-ous action lattices.

\begin{definition}
    An algebraic model $\Mf$ of $\IAL$ is a pair $(\AL, w)$ where $\AL$ is an action lattice, and $w$ is an interpretation function mapping formulae of $\IAL$ to elements of $A$. The interpretation function is defined arbitrarily on variables, maps constants $0$ and $1$ to the corresponding designated elements of $A$, and commutes with operations. A sequent $A_1, \ldots, A_n \yields B$ is true in a model $\Mf$ (notation: $\Mf \vDash A_1, \ldots, A_n \yields B$) if
    $w(A_1) \cdot \ldots \cdot w(A_n) \leqslant w(B)$ in $\AL$. For $n = 0$ the truth condition is a bit different: $\Mf \vDash {} \yields B$, if $1 \leqslant w(B)$ in $\AL$.
\end{definition}

\begin{definition}
Let $S$ be a set of sequents (possibly an infinite one) and let $\Pi \yields C$ be a sequent. The sequent $\Pi \yields C$ is \emph{semantically entailed} by $S$ on the class of all $*$-con\-ti\-nu\-ous action lattices if for any model $\Mf$ in which all sequents from $S$ are true, so is $\Pi \yields C$.
Semantic entailment by $\varnothing$ is called \emph{general validity.}
\end{definition}

The following strong soundness-and-completeness theorem holds. (The proof is the same as given by Palka~\cite{Palka2007} for the weak version of completeness.)

\begin{theo}\label{theo-strong-completeness}
    A sequent $\Pi \yields C$ is derivable in $\IAL$ from a set of sequents $S$ if and only if $\Pi \yields C$ is semantically entailed by $S$ on the class of all $*$-con\-ti\-nuous action lattices.
\end{theo}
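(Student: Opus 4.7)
The plan is to prove the two directions separately: the left-to-right direction (soundness) proceeds by induction on the well-founded derivation tree of $\Pi \yields C$ in $\IAL$ from $S$, while the right-to-left direction (completeness) is handled via a Lindenbaum--Tarski construction that turns the syntax of $\IAL$ into a canonical $\ast$-continuous action lattice in which every sequent of $S$ is true.

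For soundness, I fix an algebraic model $\Mf = (\AL, w)$ in which every sequent of $S$ is true, and show by induction on the derivation that $\Mf \vDash \Pi \yields C$. The leaves are either axioms (whose validity is built into the definition of an action lattice) or hypotheses from $S$ (true in $\Mf$ by assumption), while each finitary inference rule corresponds to a valid inequality in any action lattice; the rule $\CUT$ uses transitivity of $\leqslant$. The crucial case is the $\omega$-rule $({}^{\ast} L_\omega)$: if $w(\Gamma) \mconj w(A)^n \mconj w(\Delta) \leqslant w(C)$ for every $n$, then $\ast$-continuity gives $w(\Gamma) \mconj w(A)^\ast \mconj w(\Delta) = \sup_{n \geqslant 0}\, w(\Gamma) \mconj w(A)^n \mconj w(\Delta) \leqslant w(C)$, which is precisely where the semantic restriction to $\ast$-continuous lattices is used.

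For completeness, I define a binary relation on formulae by setting $A \leqslant_S B$ iff $A \yields B$ is derivable in $\IAL$ from $S$, and let $A \equiv_S B$ iff $A \leqslant_S B$ and $B \leqslant_S A$. The relation $\leqslant_S$ is a preorder by $\ID$ and $\CUT$, and each of the operations $\mconj$, $\adisj$, $\aconj$, $\BS$, $\SL$, ${}^\ast$ is monotone with respect to $\leqslant_S$, as is verified from the corresponding left/right rules. Hence the operations lift to the quotient $\Hc/{\equiv_S}$, yielding a candidate algebra $\AL_S$. The axioms of an action lattice --- including the residuation property, the Kleene-algebra axioms for ${}^\ast$, and lattice laws --- are then checked one by one by exhibiting the required derivations in $\IAL$. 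To witness $\ast$-continuity in $\AL_S$, I observe that $({}^{\ast} R_n)$ supplies the inequalities $[B] \mconj [A]^n \mconj [C] \leqslant [B] \mconj [A]^\ast \mconj [C]$, while $({}^{\ast} L_\omega)$ ensures that any common upper bound $[D]$ of the family $\{[B] \mconj [A]^n \mconj [C] \mid n \geqslant 0\}$ already bounds $[B] \mconj [A]^\ast \mconj [C]$; together these give the required supremum. Taking $w(A) = [A]_{\equiv_S}$ and noting that every hypothesis in $S$ is trivially derivable from $S$, we obtain a model $\Mf_S$ of $S$ in which $\Pi \yields C$ fails whenever it is not derivable from $S$ --- so semantic entailment forces derivability.

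The main obstacle is verifying $\ast$-continuity in the canonical algebra; everything else is essentially a routine rule-by-rule congruence check. Since Palka \cite{Palka2007} already carried out the corresponding construction for the pure logic (weak completeness), the only adaptation required is to relativize ``derivable in $\IAL$'' to ``derivable in $\IAL$ from $S$'' throughout, and to observe that the resulting canonical algebra automatically validates $S$; the $\omega$-rule, which is present regardless of $S$, then delivers $\ast$-continuity exactly as in Palka's argument.
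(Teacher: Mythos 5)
Your proposal is correct and follows essentially the same route as the paper, which simply notes that the proof is Palka's weak-completeness argument adapted to the strong setting: soundness by induction on the well-founded derivation (with $\ast$-continuity handling the $\omega$-rule), and completeness via the Lindenbaum--Tarski algebra of formulae modulo interderivability relativized to derivability from $S$, whose $\ast$-continuity is witnessed exactly as you describe using $({}^{\ast}R_n)$ and $({}^{\ast}L_\omega)$.
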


Now let us introduce the exponential extension of $\IAL$, denoted by $\EIAL$ and called \emph{infinitary action logic with exponentiation}~\cite{KuznetsovSperanski2022}.
The language of $\IAL$ is extended by a unary operation ${!}$, called the exponential modality. (Unlike~\cite{KuznetsovSperanski2022}, we use only one modality, not a family of \emph{subexponential} ones. For our needs, it is sufficient.) The exponential modality is written in the prefix form: ${!}A$, while Kleene star is postfix: $A^*$.

Formulae of the form ${!}A$ are called \emph{${!}$-for\-mu\-lae.} For a sequence of formulae $\Xi = A_1, \ldots, A_n$, let
${!}\Xi$ denote ${!}A_1, \ldots, {!}A_n$. 

For ${!}$-for\-mu\-lae, besides logical rules which introduce the exponential modality, there are also \emph{structural} rules of permutation, weakening, and contraction. To be more precise, the complete set of exponential rules for $\EIAL$ is as follows.
\[
\infer[({!}L)]
{\Gamma, {!}A, \Delta \yields C}
{\Gamma, A, \Delta \yields C}
\qquad
\infer[({!}R)]
{{!}\Pi \yields {!}B}
{{!}\Pi \yields B}
\]
\[
\infer[({!}P_1)]
{\Gamma, {!}A, \Pi, \Delta \yields C}
{\Gamma, \Pi, {!}A, \Delta \yields C}
\qquad
\infer[({!}P_2)]
{\Gamma, \Pi, {!}A, \Delta \yields C}
{\Gamma, {!}A, \Pi, \Delta \yields C}
\]
\[
\infer[({!}W)]
{\Gamma, {!}A, \Delta \yields C}
{\Gamma, \Delta \yields C}
\qquad
\infer[({!}C)]
{\Gamma, {!}A, \Delta \yields C}
{\Gamma, {!}A, {!}A, \Delta \yields C}
\]

Since for ${!}$-for\-mu\-lae we have permutation and contraction, in sequences of the form ${!}\Xi$ the order and multiplicity of formulae does not matter. In the view of this, we shall use the same notation ${!}\Xi$ also for the corresponding \emph{set} of formulae, 
$\{ {!}A_1, \ldots, {!}A_n \}$.

The calculus $\EIAL$ enjoys cut elimination:

\begin{theo}[{see \cite[Theorem 4.2]{KuznetsovSperanski2022}}] \label{Th:EACTomega-cutelim}
Any sequent derivable in $\EIAL$ can be derived without using $(\mathrm{Cut})$.
\end{theo}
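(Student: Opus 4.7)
The plan is to prove cut elimination by a standard double transfinite induction, adapted to accommodate both the $\omega$-rule $({}^* L_\omega)$ and the contraction rule $({!}C)$ of $\EIAL$. First, assign to each cut-free derivation $d$ an ordinal height $|d|$ via the clause $|d| = \sup \{|d'| + 1 \mid d' \text{ is an immediate subderivation of } d\}$; this is well-defined because derivations are well-founded trees. Second, fix a cut complexity measure $\rho(A)$ on formulae so that $\rho(B) < \rho(A)$ whenever $B$ is a proper subformula of $A$, and in particular $\rho(A) < \rho(A^*)$ and $\rho(A) < \rho({!}A)$. The central reduction lemma then reads: from cut-free derivations of $\Pi \yields A$ and $\Gamma, A, \Delta \yields C$ one can construct a cut-free derivation of $\Gamma, \Pi, \Delta \yields C$. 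This lemma is proved by induction primarily on $\rho(A)$ and secondarily on the natural sum of the two ordinal heights.

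The case analysis splits into three groups. First, if one of the premises is an axiom $\ID$, $(\U R)$, $(\Z L)$, or $({}^* R_0)$ whose principal formula is the cut formula, the reduct is immediate. Second, if the cut formula is parametric in the last rule of one of the premises, commute $\CUT$ with that rule; the resulting smaller cuts are handled by the inner induction hypothesis on heights. Third, if the cut formula is principal on both sides, apply the standard $\IAL$-style reductions for the multiplicative, additive, and residual connectives; for $A^*$, match a right-hand $({}^* R_n)$ with the $n$-th premise of the left-hand $({}^* L_\omega)$ and perform $n$ successive cuts on $A$, each of strictly smaller rank; for ${!}A$, a right-hand $({!}R)$ against a left-hand $({!}L)$ descends to a single cut on $A$.

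The delicate cases, and the ones I expect to be the main obstacle, concern the structural rules for the exponential. When the right premise ends with $({!}C)$ contracting the cut formula ${!}A$, the derivation of $\Pi \yields {!}A$ must be used twice against the premise $\Gamma, {!}A, {!}A, \Delta \yields C$, yielding $\Gamma, \Pi, \Pi, \Delta \yields C$. The critical observation is that the occurrence of ${!}A$ on the right must ultimately have been introduced by $({!}R)$, whose side condition forces $\Pi$ to consist entirely of ${!}$-formulae, say $\Pi = {!}B_1, \ldots, {!}B_k$; the duplicate block can then be gathered by iterated use of $({!}P_1)$ and $({!}P_2)$ and collapsed by $k$ applications of $({!}C)$. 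This move preserves the cut rank but can raise derivation heights substantially, which is exactly why the secondary induction must run over ordinals rather than natural numbers. A dual but easier subtlety arises when $({}^* L_\omega)$ on the right is parametric in the cut formula: one cuts against each of the $\omega$-many premises, producing an $\omega$-branching derivation whose height can jump by $\omega$ while the cut rank is preserved. Once the reduction lemma is in place, full cut elimination follows by a straightforward well-founded recursion that successively removes the topmost $\CUT$ in the given derivation.
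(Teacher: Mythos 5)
There is a genuine gap, and it is exactly where you anticipated trouble: the $({!}C)$ case. First, your ``critical observation'' is false as stated. From the fact that the succedent of the left premise is ${!}A$ you cannot conclude that $\Pi$ consists entirely of ${!}$-formulae: the rule $({!}R)$ may have been applied strictly higher up, with left rules applied below it. For instance, ${!}B \mconj {!}C \yields {!}(B \mconj C)$ and $\U, {!}B \yields {!}B$ are derivable, and their antecedents are not sequences of ${!}$-formulae; so after your two cuts you are left with $\Gamma, \Pi, \Pi, \Delta \yields C$ and no licence to contract the duplicated block $\Pi$ back to a single copy. Second, even in the favourable situation $\Pi = {!}B_1,\ldots,{!}B_k$, your reduction does not respect the induction you set up. The first cut (against the premise $\Gamma, {!}A, {!}A, \Delta \yields C$ of the contraction) is fine, but the second cut must be performed on the \emph{output} of the first reduction, whose height is not bounded by the heights of the original premises. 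Saying that ``the secondary induction must run over ordinals rather than natural numbers'' does not repair this: an induction needs its measure to decrease, and here neither the cut rank nor the (ordinal) height parameter decreases for the second cut. This is precisely the classical failure point of naive cut elimination in the presence of contraction.

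For comparison: the paper does not reprove this theorem at all; it cites \cite[Theorem 4.2]{KuznetsovSperanski2022}, and, as remarked in the proof of Proposition~\ref{dyadic-cut-elim}, that proof uses a form of Mix (multicut) rule, i.e.\ a rule that cuts all contracted copies of ${!}A$ simultaneously, so that the $({!}C)$ case reduces the number of occurrences rather than spawning a second cut of uncontrolled height. An alternative route, which the present paper takes for its own needs, is to switch to the dyadic calculus $\dEIALm$, where ${!}$-formulae live in a set-valued zone and contraction is absorbed into the formulation, so that a Palka-style triple induction suffices (Proposition~\ref{dyadic-cut-elim}). To salvage your argument you would need one of these devices (Mix/multicut, or a lemma handling a cut against an arbitrary number of occurrences of ${!}A$ with an all-${!}$ left context, established by induction on the right derivation only), together with the observation that one may first push the cut up the left premise until $({!}R)$ is principal, which is what actually guarantees $\Pi = {!}\Pi'$.
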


For convenience, in what follows we shall by default assume that all $\EIAL$ derivations are cut-free.
A standard corollary of the cut elimination theorem is the invertibility of the rules $(\mconj L)$, $(\adisj L)$ and $({}^* L_\omega)$.
\begin{coro}\label{coro:invertibility}
The following rules are admissible in $\EIAL$:
\[
\infer[({}^* L)^{\mathrm{inv}},\ n \in \omega]
{\Gamma, A^n, \Delta \yields C}
{\Gamma, A^*, \Delta \yields C}
\qquad
\infer[(\mconj L)^{\mathrm{inv}}]
{\Gamma, A, B, \Delta \yields C}
{\Gamma, A \mconj B, \Delta \yields C}
\]
\[
\infer[(\adisj L)^{\mathrm{inv}},\ i = 1,2]
{\Gamma, A_i, \Delta \yields C}
{\Gamma, A_1 \adisj A_2, \Delta \yields C}
\]
\end{coro}

The interesting feature of ${!}$ is that it enables a modalised form of deduction theorem, that is, incorporating finite sets of hypotheses into the sequent being derived. ``Pure'' derivability of sequents, in  turn, is easier to analyse, due to cut elimination.

Let $S$ be a finite set of sequents in the language of $\IAL$. For each sequent of the form $A_1, \ldots, A_n \yields B$ in $S$, consider the formula $(A_1 \mconj \ldots \mconj A_n) \BS B$; if $n = 0$, then the corresponding formula is just $B$. Let $\Upsilon_{S}$ be a sequence of all such formulae, in an arbitrary order. 

\begin{theo}\label{Th:deduction}
A sequent $\Pi \yields C$ is derivable in $\IAL$ from a finite set of sequents $S$ if and only if the sequent ${!}\Upsilon_{S}, \Pi \yields C$ is derivable in $\EIAL$.
\end{theo}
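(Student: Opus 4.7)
The plan is to prove the two directions separately. For the forward implication ($\Rightarrow$), I proceed by induction on the derivation of $\Pi \yields C$ in $\IAL$ from $S$, showing that for every sequent $\Gamma \yields D$ appearing in this derivation the augmented sequent ${!}\Upsilon_{S}, \Gamma \yields D$ is derivable in $\EIAL$. The axioms $\ID$, $(\U R)$, $(\Z L)$, and $({}^* R_{0})$ are handled by inserting ${!}\Upsilon_{S}$ into the antecedent via repeated applications of $({!}W)$ and $({!}P)$. For a hypothesis $A_{1}, \ldots, A_{n} \yields B \in S$ with $n \geqslant 1$, whose corresponding formula is $\Upsilon_{S}^{i} = (A_{1} \mconj \ldots \mconj A_{n}) \BS B$, I derive $A_{1}, \ldots, A_{n}, \Upsilon_{S}^{i} \yields B$ by a single application of $({\BS} L)$ whose premises $A_{1}, \ldots, A_{n} \yields A_{1} \mconj \ldots \mconj A_{n}$ and $B \yields B$ come from $\ID$ and $(\mconj R)$; then $({!}L)$ produces ${!}\Upsilon_{S}^{i}$, and $({!}W), ({!}P)$ bring in the remaining formulae of ${!}\Upsilon_{S}$ (the case $n=0$ is analogous but simpler). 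For the inductive step each $\IAL$-rule translates directly, with $({!}C)$ and $({!}P)$ employed to duplicate and reposition ${!}\Upsilon_{S}$ across the premises of the rules $({\BS} L)$, $({\SL} L)$, $(\mconj R)$, $({}^* R_{n})$, and $\CUT$ that split the antecedent.

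For the backward implication ($\Leftarrow$), I first invoke Theorem~\ref{Th:EACTomega-cutelim} to assume a cut-free $\EIAL$-derivation of ${!}\Upsilon_{S}, \Pi \yields C$. The subformula property for cut-free derivations, together with the fact that $\Pi$ and $C$ are $!$-free (being formulae of $\IAL$), implies that the only $!$-formulae appearing anywhere in the derivation are copies of the ${!}\Upsilon_{S}^{i}$'s and, moreover, that the rule $({!}R)$ is never used: indeed, any $!$-formula generated by $({!}R)$ in the succedent would have to be some ${!}\Upsilon_{S}^{i}$, and would then either persist in the succedent down to the end sequent or occur as a proper subformula of a succedent formula on the way, both being impossible because $C$ is $!$-free and the ${!}\Upsilon_{S}^{i}$'s are top-level $!$-formulae in the end sequent. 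By strong completeness (Theorem~\ref{theo-strong-completeness}), it then suffices to show that $\Pi \yields C$ is semantically entailed by $S$ on $*$-continuous action lattices. Fix any such model $\Mf = (\AL, w)$ of $S$; since each hypothesis of $S$ forces $1 \leqslant w(\Upsilon_{S}^{i})$ in $\AL$, I extend $w$ to the $!$-formulae of the derivation by putting $w({!}\Upsilon_{S}^{i}) = 1$ for each $i$. A routine verification then shows that every rule actually used in the derivation is sound under this interpretation: $({!}L)$ by monotonicity of $\mconj$ together with $w({!}\Upsilon_{S}^{i}) = 1 \leqslant w(\Upsilon_{S}^{i})$, and $({!}W), ({!}C), ({!}P)$ trivially because $w({!}\Upsilon_{S}^{i}) = 1$. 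Hence $\Mf \vDash \Pi \yields C$, and Theorem~\ref{theo-strong-completeness} delivers the required $\IAL$-derivation from $S$.

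The most delicate point in this plan is the structural lemma in the backward direction, namely that the cut-free $\EIAL$-derivation of ${!}\Upsilon_{S}, \Pi \yields C$ contains no application of $({!}R)$ and that all its $!$-formulae are among the ${!}\Upsilon_{S}^{i}$'s. This observation is precisely what permits $!$ to be interpreted in the model only on the specific formulae ${!}\Upsilon_{S}^{i}$, bypassing the need for a general interpretation of the exponential modality on arbitrary $\EIAL$-formulae in action lattices. The forward direction, by contrast, is a routine deduction-theorem bookkeeping exercise, where the main care is to keep ${!}\Upsilon_{S}$ suitably duplicated and repositioned by the structural rules $({!}P), ({!}W), ({!}C)$.
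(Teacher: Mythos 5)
Your proof is correct, and while your forward direction is essentially the paper's (prepend ${!}\Upsilon_{S}$, repair with $({!}P)$, $({!}C)$, $({!}W)$, and derive the hypothesis sequents via $({\BS}L)$ and $({!}L)$), your backward direction takes a genuinely different route. The paper stays entirely proof-theoretic: after cut elimination it \emph{erases} all ${!}$-formulae from the derivation, notes that the structural ${!}$-rules trivialise and $({!}R)$ cannot occur, and simulates each $({!}L)$ step by a $\CUT$ against the corresponding hypothesis of $S$, thereby producing an explicit $\IAL$-derivation from $S$. You instead use the same cut-elimination and subformula observation (only the ${!}\Upsilon_{S}^{i}$ can occur as ${!}$-formulae), but then go semantic: interpret each ${!}\Upsilon_{S}^{i}$ as $\U$, check soundness of the rules actually used, conclude semantic entailment, and invoke strong completeness (Theorem~\ref{theo-strong-completeness}) to recover derivability from $S$. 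Your route spares you the bookkeeping of checking that erasure preserves every rule and of simulating $({!}L)$ by cuts, but it leans on the strong soundness-and-completeness theorem as an external tool (the paper's argument is independent of it and yields an explicit derivation), and it still needs the structural lemma to make the ad hoc valuation well defined. Two small remarks on that lemma: in your trace argument for excluding $({!}R)$, a succedent formula can also be absorbed as a proper subformula of an \emph{antecedent} formula below (via the minor premise of $({\BS}L)$ or $({\SL}L)$), not only of a succedent one; the same ``no ${!}$ occurs properly inside any subformula of the end sequent'' reasoning closes this case. In fact the exclusion of $({!}R)$ is dispensable for your argument: any such application would have ${!}B = {!}\Upsilon_{S}^{i}$ by the subformula property and is vacuously sound under $w({!}\Upsilon_{S}^{i}) = \U$, so only the claim that all ${!}$-formulae in the derivation are among the ${!}\Upsilon_{S}^{i}$ is really needed.
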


This theorem is similar to \cite[Lemma~5.3]{KuznetsovSperanski2022}, \cite[Lemma~10]{KanKuzNigSce2019MSCS}, and other statements of similar form which can be found in literature. In order to make this article self-contained, we provide a complete proof.

\begin{proof}
For the ``only if'' direction, consider the derivation of $\Pi \yields C$ from $S$ in $\IAL$ and prepend the antecedent of each sequent with ${!}\Upsilon_{S}$. Inference rules  (including Cut) remain valid, possibly after adding $({!}P_1)$, $({!}P_2)$, and $({!}C)$ to handle the newly added ${!}$-for\-mu\-lae. Each axiom $\ID$ transforms into ${!}\Upsilon_S, A \yields A$, which is derived from $A \yields A$ by several applications of $({!}W)$. Finally, each sequent from $S$ translates into a derivable sequent (here and further the double line means several applications of a rule):
\[
\infer=[({!}W)]
{{!}\Upsilon_{S}, A_1, \ldots, A_n \yields B}
{\infer[({!}P_1)]{{!}((A_1 \mconj \ldots \mconj A_n) \BS B), A_1, \ldots, A_n \yields B}
{\infer[({!}L)]{A_1, \ldots, A_n, {!}((A_1 \mconj \ldots \mconj A_n) \BS B) \yields B}
{\infer[({\BS}L)]{A_1, \ldots, A_n, (A_1 \mconj \ldots \mconj A_n) \BS B \yields B}
{\infer=[({\mconj}R)]{A_1, \ldots, A_n \yields A_1 \mconj \ldots \mconj A_n}
{A_1 \yields A_1 & \ldots & A_n \yields A_n} & B \yields B}}}
}\]
For $n=0$, the derivation is simpler:
\[
\infer=[({!}W)]
{{!}\Upsilon_{S} \yields B}
{\infer[({!}L)]{{!}B \yields B}{B \yields B}}
\]
Thus, we obtain a valid derivation of ${!}\Upsilon_{S}, \Pi \yields C$ in $\EIAL$ (without any hypotheses).

For the ``if'' direction, we assume  (Theorem~\ref{Th:EACTomega-cutelim}) that we have a cut-free derivation of ${!}\Upsilon_{S}, \Pi \yields C$ in $\EIAL$. Next, we erase all ${!}$-for\-mu\-lae from this derivation. Since the derivation is cut-free, such formulae are exactly formulae from ${!}\Upsilon_{S}$. Structural rules operating ${!}$-for\-mu\-lae, namely, $({!}P_1)$, $({!}P_2)$, $({!}W)$, and $({!}C)$, trivialise. All rules not operating ${!}$-for\-mu\-lae remain valid. The $({!}R)$ rule may never be applied, since ${!}$-for\-mu\-lae do not appear in right-hand sides of sequents. Finally, each application of the $({!}L)$ rule transforms to the following:
\[
\infer
{\Gamma, \Delta \yields C}
{\Gamma, (A_1 \mconj \ldots \mconj A_n) \BS B, \Delta \yields C}
\]
Here $(A_1, \ldots, A_n \yields B) \in S$, so this application is simulated using $\CUT$ as follows:
\[
\infer[\CUT]
{\Gamma, \Delta \yields C}
{\infer[({\BS}L)]{{} \yields (A_1 \mconj \ldots \mconj A_n) \BS B}
{\infer=[({\mconj}L)]{A_1 \mconj \ldots \mconj A_n \yields B}{\overbrace{A_1, \ldots, A_n \yields B}^{\in S}}}
& \Gamma, (A_1 \mconj \ldots \mconj A_n) \BS B, \Delta \yields C}
\]
(For $n=0$ we have just a cut with the sequent ${}\yields B$ from $S$.)
\end{proof}

Theorem~\ref{Th:deduction} establishes  one-way connections (embeddings) between reasoning from finite sets of hypotheses of specific form and fragments of $\EIAL$ with corresponding restrictions on ${!}$-for\-mu\-lae. In the table below we also introduce notations for these fragments. (The third notation comes from~\cite{Kuzn2021Tableaux}, the first two are new.)

\begin{center}
\begin{tabular}{|c|c|c|}\hline
Hypotheses & Fragment of $\EIAL$ & Notation \\\hline\hline
non-expanding & ${!}$-for\-mu\-lae should be of the form  & $\EIALne$\\
& ${!}((b_1 \cdot \ldots \cdot b_n) \BS (c_1 \cdot \ldots \cdot c_m))$ where $0 \leqslant m \leqslant n$ &\\\hline
monoidal & ${!}$-for\-mu\-lae should be of the form & $\EIALm$ \\
inequations & ${!}((b_1 \cdot \ldots \cdot b_n) \BS (c_1 \cdot \ldots \cdot c_m))$ & \\\hline
$*$-free & ${!}$-formulae should not contain ${}^*$ & $\EIALminus$\\\hline
\end{tabular}
\end{center}
Complexitywise, the problems in the right column are equal to or harder than the ones in the left one; complexity also grows from top to bottom.
In what follows, we shall prove lower bounds for reasoning from hypotheses and upper bounds for the corresponding fragments of $\EIAL$. Due to the embeddings provided by Theorem~\ref{Th:deduction}, this will give exact complexity bounds for both.

%%

%\newpage
\subsection{Auxiliary dyadic calculus}\label{S:dyadic}

In our lower bound argument for $\EIALm$ (and, stronger, for reasoning from monoidal inequations, Section~\ref{S:Tikhon}) and in our upper bound argument for $\EIALne$ (Section~\ref{S:Stepan}) we shall need some reorganisation of proof trees in order to simplify their analysis. Namely, we want to get rid of superfluous usage of rules operating ${!}$ (most importantly, the contraction rule) and we want each formula of the form $(b_1 \cdot \ldots \cdot b_n) \BS (c_1 \cdot \ldots \cdot c_m)$ to be decomposed immediately above the $({!}L)$ rule which puts it under ${!}$. (Recall that these are the only formulae allowed to be put under ${!}$ in the fragment $\EIALm$.)

These goals could have been achieved in an {\em ad hoc}
way, by proving a series of lemmata using transfinite induction. We prefer, however, to perform the needed transformations more systematically, by presenting an alternative sequent calculus for $\EIALm$ (and thus also for $\EIALne$) and proving that it is equivalent to the original system. 
The idea of this reformulation comes from linear logic, namely, from Andreoli's work on {\em focusing}~\cite{Andreoli}. The idea of focusing is to apply rules eagerly to a given formula, while it is possible, and to control the usage of structural rules.

Following Andreoli, we reformulate $\EIALm$ as a {\em dyadic calculus,} separating ${!}$-for\-mu\-lae in a designated zone of the antecedent. In order to keep things simple, we refrain from constructing a proper focused system for the whole $\EIALminus$. The only place where we need focusing is decomposition of ${!}$-for\-mu\-lae, and since in $\EIALm$ their form is very restricted, we easily formulate a focused rule for them (the rule $(A)_\mathrm{d}$ below).

We start with a modified definition of sequent, suitable for our dyadic calculus.

\begin{definition}
A \emph{dyadic sequent}, or {\em d-sequent,} is an expression of the form ${!}\Xi; \Gamma \yields C$, where $C$ is a formula, $\Gamma$ is a sequence of formulae, and ${!}\Xi$ is a finite set of ${!}$-formulae, each of the form ${!}((b_1 \cdot \ldots \cdot b_n) \BS (c_1 \cdot \ldots \cdot c_m))$.
\end{definition}

Since ${!}\Xi$ is a set, not a multiset or sequence, we shall have intuitionistic-style behaviour for ${!}$-for\-mu\-lae.

Now we construct the dyadic version of $\EIALm$, denoted by $\dEIALm$, in the following way. For all axioms and rules, except those operating with ${!}$, we just add ``${!}\Xi$;'' to each antecedent. For example, the $(\mathrm{Id})$ axiom becomes ${!}\Xi; A \yields A$, the dyadic version of $({}^* R_n)$ is

\[
\infer[({}^* R_n)_{\mathrm{d}}]
{{!}\Xi; \Pi_1, \ldots, \Pi_n \yields A^*}
{{!}\Xi; \Pi_1 \yields A & \ldots & {!}\Xi; \Pi_n \yields A}
\]
and so on. The dyadic version of the $\omega$-rule $({}^* L_\omega)_\mathrm{d}$ is constructed in the same way. Notice that the dyadic version of axioms incorporates weakening, and for multiplicative rules like $({}^* R_n)_{\mathrm{d}}$ and $({\cdot} R)_{\mathrm{d}}$ we also have a hidden application of contraction.

The dyadic rules for ${!}$ are as follows:
\[
\infer[({!}L)_{\mathrm{d}}]
{{!}\Xi; \Gamma, {!}A, \Delta \yields C}
{{!}\Xi \cup \{ {!} A \} ; \Gamma, \Delta \yields C}
\qquad
\infer[({!}R)_{\mathrm{d}}]
{{!}\Xi; \yields {!}B}
{{!}\Xi; \yields B}
\]
\[
\infer[(A)_{\mathrm{d}}]
{\{{!}((b_1 \cdot \ldots \cdot b_n) \BS (c_1 \cdot \ldots \cdot c_m)) \} \cup {!}\Xi; \Gamma, b_1, \ldots, b_n, \Delta \yields C}
{\{{!}((b_1 \cdot \ldots \cdot b_n) \BS (c_1 \cdot \ldots \cdot c_m)) \} \cup {!}\Xi; \Gamma, c_1, \ldots, c_m, \Delta \yields C} 
\]

The rules $({!}L)_{\mathrm{d}}$ and $({!}R)_{\mathrm{d}}$ are the new left and right introduction rules for~${!}$. The $(A)_{\mathrm{d}}$ rule
allows using the formula $B = (b_1 \cdot \ldots \cdot b_n) \BS (c_1 \cdot \ldots \cdot c_m)$, i.e., replacing $c_1, \ldots, c_m$ with $b_1, \ldots, b_n$, provided that ${!}B$ belongs to the ${!}$-zone of the d-se\-qu\-ent. This is actually a combination of structural rules for ${!}B$ (contraction to copy and permutation to move) and the rules decomposing division and product.  The letter ``$A$'' comes from Andreoli's term ``absorption,'' which means taking a formula 
$B = (b_1 \cdot \ldots \cdot b_n) \BS (c_1 \cdot \ldots \cdot c_m)$ and absorbing it into the $!$-zone. Our $(A)_{\mathrm{d}}$ rule, however, also includes decomposition of division and product, so we rather read ``$A$'' as ``application'' (of a formula ${!}B$ from the $!$-zone).

Finally, $\dEIALm$ includes the following two  cut rules:
\[
\infer[(\mathrm{Cut})_{\mathrm{d}1}]
{{!}\Xi; \Gamma, \Pi, \Delta \yields C}
{{!}\Xi; \Pi \yields A & {!}\Xi; \Gamma, A, \Delta \yields C}
\qquad
\infer[(\mathrm{Cut})_{\mathrm{d}2}]
{{!}\Xi; \Gamma \yields C}
{{!}\Xi ; \yields B &  {!}\Xi \cup \{ {!}B \}; \Gamma \yields C}
\]
(The second rule is meaningful when ${!}B \notin {!}\Xi$, otherwise it trivialises.)

The main properties of $\dEIALm$ are cut elimination and equivalence to the original system $\EIALm$. We shall prove them below in Section~\ref{sec-dyadic}, along with some other properties of $\dEIALm$.

\subsection{Inductive definitions}\label{S:inductive}

For each infinitary calculus $\mathbf{L}$, the collection of all sequents derivable in $\mathbf{L}$ can be viewed as the least fixed point of a suitable monotone operator (on sets of sequents) corresponding to $\mathbf{L}$.\footnote{Here
the notion of sequent depends on the choice of $\mathbf{L}$. For example, in case $\mathbf{L} = \dEIALm$ we restrict ourselves to dyadic sequents, as defined in Section~\ref{S:dyadic}.}
For expository purposes, we are going to briefly review this approach.

To simplify our discussion, let
\begin{align*}
\Ord\ &:=\ \textrm{the class of all ordinals} ,\\
\LOrd\ &:=\ \textrm{the class of all limit ordinals} ,\\
\COrd\ &:=\ \textrm{the class of all constructive ordinals} .
\end{align*}
The least element of $\Ord \setminus \COrd$ is traditionally called the \emph{Church--Kleene ordinal}, and denoted by $\omega_1^\mathrm{CK}$. See \cite[\S\S11.7--8]{Rogers-1967} for more on constructive ordinals.

Let $F$ be a monotone function from $\mathcal{P} \left( \omega \right)$ to $\mathcal{P} \left( \omega \right)$, i.e.\ for any $S, T \subseteq \omega$,
\[
S \subseteq T \quad \Longrightarrow \quad
{F \left( S \right)} \subseteq {F \left( T \right)} .
\]
Then for each $S \subseteq \omega$ we can inductively define
\[
{F^{\alpha} \left( S \right)}\ :=\
\begin{cases}
S                                                       &\text{if} ~\, \alpha = 0 ,\\
{F \left( F^{\beta} \left( S \right) \right)}           &\text{if} ~\, \alpha = \beta +1 ,\\
{\bigcup_{\beta < \alpha} {F^{\beta} \left( S \right)}} &\text{if} ~\, \alpha \in \LOrd \setminus \left\{ 0 \right\} .
\end{cases}
\]
Evidently, the resulting transfinite sequence is monotone as a class function from $\Ord$ to $\mathcal{P} \left( \omega \right)$, i.e.\ $\alpha \leqslant \beta$ implies $F^{\alpha} \left( S \right) \subseteq F^{\beta} \left( S \right)$. This observation quickly leads to:

\begin{folk} \label{folk-id-1}
Let $F: \mathcal{P} \left( \omega \right) \rightarrow \mathcal{P} \left( \omega \right)$ be monotone. Then for every $S \subseteq \omega$, if $S \subseteq F \left( S \right)$, there exists $\alpha \in \mathsf{Ord}$ such that $F^{\alpha+1} \left( S \right) = F^{\alpha} \left( S \right)$~--- so $F^{\alpha} \left( S \right)$ is the least fixed point of $F$ containing $S$.
\end{folk}

Let $F$ be as above. For brevity, denote
\[
\mathtt{LFP} \left( F \right)\ :=\
{\text{the least fixed point of}\ F\ (\text{containing}\ \varnothing)} ,
\]
which exists by Folklore~\ref{folk-id-1}. By the \emph{closure ordinal} of $F$ we mean the least $\alpha \in \mathsf{Ord}$ such that $F^{\alpha+1} \left( \varnothing \right) = F^{\alpha} \left( \varnothing \right)$. So this ordinal indicates how many steps are needed to reach $\mathtt{LFP} \left( F \right)$. In addition, for each $n \in \mathtt{LFP} \left( F \right)$, the least $\alpha \in \Ord$ such that $n \in F^{\alpha + 1} \left( \varnothing \right)$ is called the \emph{rank} of $n$ with respect to $F$. Obviously, the closure ordinal of $F$ equals the supremum of the ranks of the members of $\mathtt{LFP} \left( F \right)$ (with respect to $F$).

Denote by $\mathfrak{N}$ the standard model of arithmetic. It is convenient to assume that the sig\-n\-a\-tu\-re of $\mathfrak{N}$ contains a symbol for every computable function or relation. We shall be concerned with monotone operators definable in $\mathfrak{N}$. Take $\mathcal{L}_2$ to be the language of monadic second-order logic based on the signature of $\mathfrak{N}$.\footnote{Of
course, the restriction to monadic formulae is~not essential, for there are computable bijections from $\omega \times \omega$ onto $\omega$.}
Thus $\mathcal{L}_2$ includes two sorts of variables:
\begin{itemize}

\item \emph{individual variables} $x$, $y$, \dots, intended to range over $\omega$;

\item \emph{set variables} $X$, $Y$, \dots, intended to range over the powerset of $\omega$.

\end{itemize}
Accordingly, one needs to distinguish between \emph{individual} and \emph{set quantifiers}, viz.\
\[
{\exists x},\ {\forall x},\ {\exists y},\ {\forall y},\ \dots
\quad \text{and} \quad
{\exists X},\ {\forall X},\ {\exists Y},\ {\forall Y},\ \dots
\]
Let $n \in \omega \setminus \left\{ 0 \right\}$. Recall that an $\mathcal{L}_2$-for\-mu\-la is in $\Sigma^0_n$ ($\Pi^0_n$) if{f} it has the form
\[
\underbrace{{\exists \vec{x}_1}\, {\forall \vec{x}_2}\, \dots\, \vec{x}_n}_{n-1\ \text{alternations}}\, \Psi
\quad
( \text{respectively}\
\underbrace{{\forall \vec{x}_1}\, {\exists \vec{x}_2}\, \dots\, \vec{x}_n}_{n-1\ \text{alternations}}\, \Psi
)
\]
where $\vec{x}_1$, $\vec{x}_2$, \ldots, $\vec{x}_n$ are tuples of individual variables, and $\Psi$ is quantifier-free. A subset of $\omega$ belongs to $\Sigma^0_n$ ($\Pi^0_n$) if{f} it is definable in $\mathfrak{N}$ by a $\Sigma^0_n$-for\-mula ($\Pi^0_n$-for\-mula).

Given an $\mathcal{L}_2$-for\-mu\-la $\Phi \left( x, X \right)$ whose only free variables are $x$ and $X$, we denote by $\left[ \Phi \right]$ the fol\-low\-ing function from $\mathcal{P} \left( \omega \right)$ to $\mathcal{P} \left( \omega \right)$:
\[
{\left[ \Phi \right] \left( S \right)}\ :=\ {\left\{ n \in \omega \mid \mathfrak{N} \models \Phi \left( n, S \right) \right\}} .
\]
Now call $\Phi \left( x, X \right)$ \emph{positive} if{f} no free occurrence of $X$ in $\Phi$ is in the scope of an odd number of nested negations, provided we treat $\rightarrow$ as defined using $\neg$ and $\vee$. Evidently,
\[
{\Phi \left( x, X \right)} \enskip \text{is positive}
\quad \Longrightarrow \quad
{\left[ \Phi \right]} \enskip \text{is monotone} .
\]
We say that a function $F$ from $\mathcal{P} \left( \omega \right)$ to $\mathcal{P} \left( \omega \right)$ is \emph{elementary} if{f} $F = \left[ \Phi \right]$ for some positive~$\mathcal{L}_2$-for\-mu\-la $\Phi \left( x, X \right)$ with no~set quantifiers. See \cite{Moschovakis-1974} for more on operators of this kind.\footnote{In
particular, it is known that whenever $F$ is elementary, then its least fixed point belongs to $\Pi^1_1$, and its closure ordinal is less than or equal to $\omega_1^{\mathrm{CK}}$. But these bounds need~not, in general, be precise.}

Consider an infinitary calculus $\mathbf{L}$, such as $\EIALminus$. We associate with $\mathbf{L}$ its \emph{immediate de\-ri\-va\-bi\-lity operator} $\mathscr{D}_{\mathbf{L}}$ on sets of sequents as follows: for any sequent $s$ and set of sequents $S$ (in the language of $\mathbf{L}$),
\[
s\ \in\ {\mathscr{D}_{\mathbf{L}} \left( S \right)}
\quad \Longleftrightarrow \quad
\begin{array}{c}
s\ \text{is an element of}\ S\ \text{or}\ s\ \text{can be obtained from}\\ %[0.2em]
\text{elements of}\ S\ \text{by one application of some rule of}\ \mathbf{L} .
\end{array}
\]
Notice that the least fixed point of $\mathscr{D}_{\mathbf{L}}$ coincides with the collection of all sequents derivable in $\mathbf{L}$. Assuming some effective G\"{o}del numbering for sequents, we can identify $\mathscr{D}_{\mathbf{L}}$ with a function from $\mathcal{P} \left( \omega \right)$ to $\mathcal{P} \left( \omega \right)$.  For convenience, the immediate derivability operator of $\EIALminus$ will be denoted by $\mathscr{D}^{-}$. It is straightforward to check that $\mathscr{D}^{-}$ is elementary. If $s$ is a sequent derivable in $\EIALminus$, we shall write $\mathrm{rank}^{-} \left( s \right)$ for the rank of $s$ with respect to $\mathscr{D}^{-}$.

%%%

%%%%

%\newpage
\subsection{The hyperarithmetical hierarchy up to \texorpdfstring{$\omega^\omega$}{omega to omega}} \label{subsec-ha}

Given $S \subseteq \omega$, take $\mathtt{U}^S$ to be one's favourite partial $S$-com\-pu\-table two-place function on $\omega$ that is universal for the class of all partial $S$-com\-pu\-table one-place functions on $\omega$. For each $k \in \omega$ we use $\mathtt{U}^S_k$ to denote the $k$-th projection of $\mathtt{U}^S$, i.e.\
\[
\mathtt{U}^S_k\ :=\
{\lambda m}. {\left[ \mathtt{U}^S \left( k, m \right) \right]} .
\]
If $S = \varnothing$, the superscript $S$ in $\mathtt{U}^S$ may be omitted. Further, we write $\leqslant$ for many-one reducibility and $\equiv$ for many-one equivalence. So for any $S, T \subseteq \omega$:
\begin{align*}
S \leqslant T
\quad &\Longleftrightarrow \quad
\text{there exists}\ {k \in \omega}\ \text{such that}\ \mathtt{U}_k\ \text{is total}\ \text{and}\ {S = {\left( \mathtt{U}_k \right)}^{-1} \left[ T \right]} ;\\
S \equiv T
\quad &\Longleftrightarrow \quad
{S \leqslant T}\ \text{and}\ {T \leqslant S} .
\end{align*}
In effect, `many-one' may be replaced `one-one', but in the present context, this is primarily the matter of taste; see \cite[Chapter~7]{Rogers-1967} for more information.

Take $\mathtt{J}$ to be the Turing jump operator on the powerset of $\omega$, which can be defined by
\[
{\mathtt{J} \left( S \right)}\ :=\
{\left\{
k \in \omega \mid
{\mathtt{U}^S \left( k, k \right)} ~ \text{converges}
\right\}} .
\]
Recall the following characterisation of the arithmetical hierarchy of subsets of~$\omega$.

\begin{folk} \label{folk-arithm}
Let $n \in \omega \setminus \left\{ 0 \right\}$. Then for every $S \subseteq \omega$,
\[
S ~ \text{belongs to} ~ \Sigma^0_n
\quad \Longleftrightarrow \quad
{S ~ \text{is many-one reducible to} ~ \mathtt{J}^n \left( \varnothing \right)}
\]
where $\mathtt{J}^n \left( \varnothing \right)$ denotes the result of applying the jump operator $n$ times to $\varnothing$. Moreover, the sets belonging to $\Pi^0_n$ are the complements of those belonging to~$\Sigma^0_n$.
\end{folk}

Roughly speaking, the \emph{hyperarithmetical hierarchy} is obtained by iterating $\mathtt{J}$ over $\COrd$. To do this properly, a special system of notation for $\COrd$, called \emph{Kleene's $\mathcal{O}$}, is applied. However, we shall only need the first $\omega^\omega$ iterations of~$\mathtt{J}$. Hence it is reasonable to use a simpler system of notation, which will be described shortly.

Let $\Ns$ be the collection of all sequences of natural numbers with limit $0$, i.e.\
\[
\Ns\ :=\
{\left\{
\left( m_0, m_1, \ldots \right)  \mid
{\left( \exists k \right)}\, {\left( \forall i \geqslant k \right)}\, {\left( m_i = 0 \right)}
\right\}} .
\]
Consider the binary relation $\prec$ on $\Ns$ defined by
\[
{\left( m_0, m_1, \ldots \right) \prec \left( n_0, n_1, \ldots \right)}\
:\Longleftrightarrow\
{\exists k}\, {\left( {m_k < n_k} \wedge {\left( \forall i > k \right)}\, {\left( m_i = n_i \right)} \right)} .
\]
Evidently, $\prec$ is a well-ordering of $\Ns$. Furthermore, $\Ns$ and $\omega^{\omega}$ are isomorphic via the function $\nu$ from $\Ns$ to $\omega^{\omega}$ given by
\[
{\nu \left( \left( m_0, m_1, m_2, \ldots \right) \right)}\ :=\
{\ldots + \omega^2 \cdot m_2 + \omega \cdot m_1 + m_0} .
\]
Of course, we can identify $\Ns$ with a computable subset of $\omega$, say, by using the function $\rho$ from $\Ns$ to $\omega$ given by
\[
{\rho \left( \left( m_0, m_1, m_2, \ldots \right) \right)}\ :=\
{p_0}^{m_0} \cdot {p_1}^{m_1} \cdot {p_2}^{m_2} \cdot \ldots
\]
where $p_k$ is the $\left( k + 1 \right)$-st prime number. Then $\rho^{-1} \circ \nu$ turns out to be a univalent computable system of notation for $\omega^{\omega}$, which is reducible to Kleene's $\mathcal{O}$ in a suitable way; see \cite[\S\,11.7]{Rogers-1967} for details. If $\alpha < \omega^{\omega}$, we write $\sharp \alpha$ for $\rho \left( \nu^{-1} \left( \alpha \right) \right)$.

Now at the heart of our smaller hierarchy is an indexed family $\langle {H \left( \alpha \right)}: \alpha \leqslant \omega^{\omega} \rangle$ of subsets of $\omega$ such that
\[
{H \left( \alpha \right)}\ :=\
\begin{cases}
\varnothing &\text{if}\ \alpha = 0\\
{\mathtt{J} \left( H \left( \beta \right) \right)} &\text{if}\ \alpha = \beta + 1\\
{\left\{
\mathtt{c}\, ( m, {\sharp \beta} ) \mid
{m \in H \left( \beta \right)}\ \text{and}\ {\beta < \alpha} \right\}} &\text{if}\ {\alpha = \LOrd \setminus \left\{ 0 \right\}}
\end{cases}
\]
where $\mathtt{c}$ denotes one's favourite computable bijection from $\omega \times \omega$ onto $\omega$ (say, the well-known Cantor pairing function); cf.\ \cite[\S\,16.8]{Rogers-1967}. It should be remarked that in Kleene's $\mathcal{O}$ each~const\-ruc\-ti\-ve~ordinal may receive many notations, and if $m$ and $n$ are notations for a constructive limit ordinal $\alpha$, then $H \left( m \right)$ and $H \left( n \right)$ (as defined in \cite[\S\,16.8]{Rogers-1967}) are only Turing equivalent, but not necessarily many-one equivalent. This problem does~not arise in the case of $\rho^{-1} \circ \nu$.

Take $\Sigma^0_0$ --- as well as $\Pi^0_0$ --- to be the collection of all computable subsets of $\omega$. With each non-zero $\alpha \leqslant \omega^{\omega}$ we associate
\[
\Sigma^0_{\alpha}\ :=\
{\left\{
S \subseteq \omega \mid
S \leqslant H \left( \alpha \right)
\right\}}
\quad \text{and} \quad
\Pi^0_{\alpha}\ :=\
{\left\{
\omega \setminus S \mid
S \in \Sigma^0_\alpha
\right\}} . %or {\left\{ S \subseteq \omega \mid S \leqslant \omega \setminus H \left( \alpha \right) \right\}
\]
This gives us (a variant of) the \emph{hyperarithmetical hierarchy up to $\omega^{\omega}$}, whose initial segment of type $\omega$ coincides with the arithmetical hierarchy by Folklore~\ref{folk-arithm}. Further, since both $H \left( \omega \right)$ and $\omega \setminus H \left( \omega \right)$ are many-one equivalent to complete first-order arithmetic, $\Sigma^0_{\omega}$ and $\Pi^0_{\omega}$ coincide. To get a larger picture, we have the following.

\begin{folk} \label{folk-hierarchy}
For any $\alpha, \beta \leqslant \omega^{\omega}$:
\begin{itemize}

\item $\Sigma^0_\alpha \cup \Pi^0_\beta \subsetneq \Sigma^0_{\alpha +1} \cap \Pi^0_{\beta +1}$;

\item if $\alpha \not \in \LOrd$, then
$\Sigma^0_{\alpha} \setminus \Pi^0_{\alpha} \ne
\varnothing$ and $\Pi^0_{\alpha} \setminus \Sigma^0_{\alpha} \ne
\varnothing$;

\item if $\alpha \in \LOrd$, then
$\Sigma^0_{\alpha} = \Pi^0_{\alpha}$.

\end{itemize}
\end{folk}

Finally, given $\alpha \leqslant \omega^{\omega}$, we say that $S \subseteq \omega$ is
\emph{$\Sigma^0_{\alpha}$-com\-p\-le\-te} if $S \equiv H \left( \alpha \right)$, and \emph{$\Pi^0_{\alpha}$-com\-p\-le\-te} if $S \equiv \omega \setminus H \left( \alpha \right)$. Clearly, if $\alpha$ is limit, then the two notions coincide.

%%%

%\newpage
\section{Properties of the dyadic system}\label{sec-dyadic}

In this section, we shall prove certain properties of the dyadic system~$\dEIALm$. This system was introduced in Section~\ref{S:dyadic} above and will be used as a technical device in the proofs in Sections~\ref{S:Tikhon} and~\ref{S:Stepan} below. Namely, the usage of this specific dyadic system enables some amount of Andreoli-style~\cite{Andreoli} focusing, which makes proof analysis much easier. The most important properties of $\dEIALm$ are cut elimination and equivalence to the original system $\EIALm$.

\begin{prop}\label{dyadic-cut-elim}
Any sequent derivable in $\dEIALm$ can be derived without using $(\mathrm{Cut})_{\mathrm{d}1}$ and $(\mathrm{Cut})_{\mathrm{d}2}$.
\end{prop}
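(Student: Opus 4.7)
The plan is to prove cut elimination by a double transfinite induction. The primary measure is the complexity of the cut formula (where ${!}B$ counts as the cut formula of an instance of $(\mathrm{Cut})_{\mathrm{d}2}$), and the secondary measure is the natural sum of the ordinal ranks of the two premise derivations. A transfinite secondary induction is forced by the $\omega$-rule $({}^{\ast} L_{\omega})_{\mathrm{d}}$, whose derivations have countable rank; this mirrors the Palka-style cut elimination for $\IAL$ and its extension to $\EIAL$ recalled in Theorem~\ref{Th:EACTomega-cutelim}, adapted here to the dyadic presentation and the focused rule $(A)_{\mathrm{d}}$. As an independent preliminary---provable by straightforward rank induction without any use of cut---I would establish invertibility of $({\BS} R)_{\mathrm{d}}$ and $({\mconj} L)_{\mathrm{d}}$ in $\dEIALm$, to be invoked in one place below.

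For instances of $(\mathrm{Cut})_{\mathrm{d}1}$ the argument follows the familiar schema. If the cut formula is non-principal in one of the premises, I permute the cut upward past the last rule of that premise; the uniform presence of the ${!}$-zone in every rule---and the fact that $({!}L)_{\mathrm{d}}$ and $(A)_{\mathrm{d}}$ touch only the ${!}$-zone and (for $(A)_{\mathrm{d}}$) the linear part---makes every such permutation lawful and yields a cut of strictly smaller secondary measure. When the cut formula is principal on both sides, the standard key-case reductions produce cuts on its immediate subformulas, discharged by the primary IH. The only nonstandard key case is when the cut formula is an exponential ${!}C$: here the left premise ends with $({!}R)_{\mathrm{d}}$ and the right one with $({!}L)_{\mathrm{d}}$, and I convert the situation into an application of $(\mathrm{Cut})_{\mathrm{d}2}$ whose premises have strictly smaller rank than those of the original cut, so the secondary IH applies at the same primary level.

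The crux is the elimination of $(\mathrm{Cut})_{\mathrm{d}2}$, whose cut formula ${!}B$ has the form ${!}((b_1 \mconj \ldots \mconj b_n) \BS (c_1 \mconj \ldots \mconj c_m))$. I permute the cut upward through the right premise, invoking the secondary IH, until I meet a rule acting on the distinguished occurrence of ${!}B$; the only such rule is $(A)_{\mathrm{d}}$, with premise ${!}\Xi \cup \{{!}B\}; \Gamma', c_1, \ldots, c_m, \Delta' \yields C$ and conclusion ${!}\Xi \cup \{{!}B\}; \Gamma', b_1, \ldots, b_n, \Delta' \yields C$. The secondary IH delivers a cut-free derivation of ${!}\Xi; \Gamma', c_1, \ldots, c_m, \Delta' \yields C$, and iterated $({\mconj} L)_{\mathrm{d}}$ gives ${!}\Xi; \Gamma', c_1 \mconj \ldots \mconj c_m, \Delta' \yields C$. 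Meanwhile, the invertibility lemma applied to the left cut premise ${!}\Xi; {} \yields (b_1 \mconj \ldots \mconj b_n) \BS (c_1 \mconj \ldots \mconj c_m)$ supplies a cut-free derivation of ${!}\Xi; b_1, \ldots, b_n \yields c_1 \mconj \ldots \mconj c_m$. A single $(\mathrm{Cut})_{\mathrm{d}1}$ on the strictly simpler formula $c_1 \mconj \ldots \mconj c_m$, admissible by the primary IH, then produces the required ${!}\Xi; \Gamma', b_1, \ldots, b_n, \Delta' \yields C$.

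The main obstacle---and the motivation for the dyadic presentation in the first place---is controlling the implicit contraction packaged into $(A)_{\mathrm{d}}$: a single occurrence of ${!}B$ in the ${!}$-zone can be appealed to unboundedly often throughout the right subderivation, and every such appeal has to be dissolved when the cut is pushed through. The transfinite secondary induction accommodates this because each $(A)_{\mathrm{d}}$-elimination step leaves behind only a cut of strictly smaller primary rank, while ordinary permutations strictly decrease the secondary rank; the $\omega$-rule $({}^{\ast} L_{\omega})_{\mathrm{d}}$ is handled in the standard infinitary manner by distributing the cut uniformly over all $\omega$ premises at once.
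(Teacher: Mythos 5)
Your proposal is correct and takes essentially the same route as the paper's proof: a Palka-style transfinite cut elimination by induction on the complexity of the cut formula together with the ranks of the premise derivations, with the same two key cases --- the ${!}$-principal case of $(\mathrm{Cut})_{\mathrm{d}1}$ converted into a $(\mathrm{Cut})_{\mathrm{d}2}$ of smaller measure, and the $(A)_{\mathrm{d}}$-principal case of $(\mathrm{Cut})_{\mathrm{d}2}$ reduced to cuts on the product subformulae of $B$. The only (harmless) deviations are bookkeeping --- your double induction with the natural sum of ranks, counting ${!}B$ rather than $B$ as the $(\mathrm{Cut})_{\mathrm{d}2}$ cut formula, versus the paper's triple induction --- and your use of a cut-free invertibility lemma for $({\BS}R)_{\mathrm{d}}$ and $({\mconj}L)_{\mathrm{d}}$ followed by a single cut on $c_1 \mconj \ldots \mconj c_m$, where the paper instead observes that the principal left premise must end with $({\BS}R)_{\mathrm{d}}$ and reconstructs the derivation with two cuts, on $b_1 \mconj \ldots \mconj b_n$ and $c_1 \mconj \ldots \mconj c_m$.
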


\begin{proof}
The proof is by the same argument as Palka's proof of cut elimination for $\IAL$~\cite[Theorem~3.1]{Palka2007}. (Thus, it is simpler than the argument from~\cite[Theorem~4.2]{KuznetsovSperanski2022} for cut elimination in $\EIAL$, where a form of Mix rule was used; here it is unnecessary.) 

In order to eliminate one cut (either $(\mathrm{Cut})_{\mathrm{d}1}$ or $(\mathrm{Cut})_{\mathrm{d}2}$), we suppose that its premises already have cut-free derivations and prove that then the conclusion also has one. (Once we know how to eliminate one cut, a standard transfinite induction argument shows that any derivable sequent is cut-free derivable.)
We proceed by triple induction on the following parameters: (1) the complexity $|A|$ of the formula being cut (complexity is computed in the usual way, as the total number of variable, constant, and connective occurrences); (2) the rank of the right premise of cut; (3) the rank of the left premise of cut. In (2) and (3), ranks are computed w.r.t.\ the cut-free immediate derivability operator.
As usual, we consider possible cases on the lowermost rules in the derivation of premises of cut.

The new interesting cases are those involving the rules $({!}L)_{\mathrm{d}}$, $({!}R)_{\mathrm{d}}$, and $(A)_{\mathrm{d}}$. Other rules are considered exactly as in Palka's proof~\cite[Theorem~3.1]{Palka2007}. Moreover, the cases where one of these lowermost rules is \emph{non-principal,} i.e., does not operate the formula $A$ or $B$ being cut, are also trivial: cut can be propagated upwards, reducing induction parameter (2) or (3). Notice that such propagation is possible for both $(\mathrm{Cut})_{\mathrm{d}1}$ and $(\mathrm{Cut})_{\mathrm{d}2}$. (The latter was not considered by Palka, but the proof transformations are exactly the same.)

Thus, the interesting cases are those where both lowermost rules are principal and are not covered in Palka's proof. That is, we have the following two cases.

\emph{Case 1:} the formula being cut is ${!}A$, introduced by $({!}R)_{\mathrm{d}}$ on the left and $({!}L)_{\mathrm{d}}$ on the right. The cut rule in this case is $(\mathrm{Cut})_{\mathrm{d}1}$.
\[
\infer[(\mathrm{Cut})_{\mathrm{d}1}]
{{!}\Xi; \Gamma, \Delta \yields C}
{\infer[({!}R)_{\mathrm{d}}]{{!}\Xi; \yields {!}A}
{{!}\Xi; \yields A} & 
\infer[({!}L)_{\mathrm{d}}]{{!}\Xi; \Gamma, {!}A, \Delta \yields C}
{{!}\Xi \cup \{ {!} A \}; \Gamma, \Delta \yields C}}
\]
This is transformed into an application of $(\mathrm{Cut})_{\mathrm{d}2}$ with a smaller parameter (1): $A$ has less complexity than ${!}A$.
\[
\infer[(\mathrm{Cut})_{\mathrm{d}2}]
{{!}\Xi; \Gamma, \Delta \yields C}
{{!}\Xi; \yields A & {!}\Xi \cup \{ {!} A \}; \Gamma, \Delta \yields C}
\]

\emph{Case 2:} the left premise is derived by $({\BS}R)_{\mathrm{d}}$ introducing the formula $B = (b_1 \cdot \ldots \cdot b_n) \BS (c_1 \cdot \ldots \cdot c_m)$, and the lowermost rule in the derivation of the right premise is $(A)_{\mathrm{d}}$ which uses ${!}B$ in the ${!}$-zone. The cut rule here is $(\mathrm{Cut})_{\mathrm{d}2}$, and this is the only principal case for this cut rule. (In all other cases it just gets propagated, as noticed above. For the axiom, the contents of the ${!}$-zone does not matter, so $(\mathrm{Cut})_{\mathrm{d}2}$ which removes something from it can be just ignored.) 

The derivation in Case~2 ends as follows (the interesting case is ${!}B \notin {!}\Xi$):
\[
\infer[(\mathrm{Cut})_{\mathrm{d}2}]
{{!}\Xi; \Gamma, b_1, \ldots, b_n, \Delta \yields C}
{\infer[({\BS}R)_{\mathrm{d}}]{{!}\Xi; \yields B}
{{!}\Xi; b_1 \cdot \ldots \cdot b_n \yields c_1 \cdot \ldots \cdot c_m}
&
\infer[(A)_{\mathrm{d}}]
{{!}\Xi \cup \{ {!}B \}; \Gamma, b_1, \ldots, b_n, \Delta  \yields C}
{{!}\Xi \cup \{ {!}B \}; \Gamma, c_1, \ldots, c_m, \Delta \yields C}
}
\]
This gets reconstructed in the following way.
Consider the following two derivations (recall that a double line means several applications of a rule):
\[
\infer[(\mathrm{Cut})_{\mathrm{d}1}]
{{!}\Xi; b_1, \ldots, b_n \yields c_1 \cdot \ldots \cdot c_m}
{\infer=[({\cdot}R)_{\mathrm{d}}]{{!}\Xi; b_1, \ldots, b_n \yields b_1 \cdot \ldots \cdot b_n}{{!}\Xi; b_1 \yields b_1 & \ldots & {!}\Xi; b_n \yields b_n} & 
{!}\Xi; b_1 \cdot \ldots \cdot b_n \yields c_1 \cdot \ldots \cdot c_m} 
\]

\[
\infer=[({\cdot}L)_{\mathrm{d}}]{{!}\Xi; \Gamma, c_1 \cdot \ldots \cdot c_m, \Delta \yields C}
{\infer[(\mathrm{Cut})_{\mathrm{d}2}]{{!}\Xi; \Gamma, c_1, \ldots, c_m, \Delta \yields C}{{!}\Xi; \yields B  & {!}\Xi \cup \{ {!}B \}; \Gamma, c_1, \ldots, c_m, \Delta \yields C}}
\]
(If $m = 0$ or $n = 0$, we use rules for $\U$ instead of the ones for product.) 

In the first derivation, $(\mathrm{Cut})_{\mathrm{d}1}$ is applied to $b_1 \cdot \ldots \cdot b_n$, which has strictly smaller complexity than $B$. Thus, parameter~(1) got reduced, and by induction hypothesis ${!}\Xi; b_1, \ldots, b_n \yields c_1 \cdot \ldots \cdot c_m$ is cut-free derivable.  

In the second derivation, $(\mathrm{Cut})_{\mathrm{d}2}$ is applied with the same parameter~(1) and a smaller parameter~(2): the rank of the right premise got decreased by 1. By induction hypothesis, ${!}\Xi; \Gamma, c_1, \ldots, c_m, \Delta \yields C$ is cut-free derivable, and therefore so is ${!}\Xi; \Gamma, c_1 \cdot \ldots \cdot c_m, \Delta \yields C$.

Finally, we combine the two d-sequents using $(\mathrm{Cut})_{\mathrm{d}1}$:
\[
\infer[(\mathrm{Cut})_{\mathrm{d}1}]
{{!}\Xi; \Gamma, b_1, \ldots, b_n, \Delta \yields C}
{{!}\Xi; b_1, \ldots, b_n \yields c_1 \cdot \ldots \cdot c_m & {!}\Xi; \Gamma, c_1 \cdot \ldots \cdot c_m, \Delta \yields C}
\]
This instance of cut has a smaller parameter~(1), and by induction hypothesis the goal d-sequent is cut-free derivable.
\end{proof}

Before proving that $\dEIALm$ is equivalent to the original system $\EIALm$, let us establish some properties of $\dEIALm$.

\begin{lemm}
If a d-sequent ${!}\Xi; \Gamma \yields C$ is derivable in $\dEIALm$, then so is ${!}\Xi \cup {!}\Xi'; \Gamma \yields C$ for any ${!}\Xi'$.
\end{lemm}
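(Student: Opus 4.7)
The plan is to argue by induction on the (ordinal) rank of the cut-free or cut-containing derivation of ${!}\Xi; \Gamma \yields C$ in $\dEIALm$ --- equivalently, a straightforward induction on the well-founded derivation tree, which is needed because of the presence of the $\omega$-rule $({}^* L_\omega)_{\mathrm{d}}$. We show that from a derivation $\pi$ of ${!}\Xi; \Gamma \yields C$ one can construct a derivation $\pi'$ of ${!}\Xi \cup {!}\Xi'; \Gamma \yields C$ by uniformly enlarging the ${!}$-zone of every d-sequent occurring in $\pi$ with ${!}\Xi'$ (intersecting formulas are absorbed, since the ${!}$-zone is a set).

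For the base case, each axiom of $\dEIALm$ --- e.g.\ ${!}\Xi; A \yields A$, ${!}\Xi; {} \yields \U$, ${!}\Xi; \Gamma, \Z, \Delta \yields C$, and ${!}\Xi; {} \yields A^*$ --- remains an axiom under an arbitrary enlargement of the ${!}$-zone, since the dyadic axiom schemes already incorporate implicit weakening. For the inductive step, we distinguish three groups of rules.

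For all rules that neither modify nor inspect the ${!}$-zone (identity-style axioms aside, this is the majority of the rules: the connective rules for $\BS$, $\SL$, $\mconj$, $\adisj$, $\aconj$, the constants, the rules for ${}^{\ast}$ including $({}^* L_\omega)_{\mathrm{d}}$, as well as both cuts), we apply the induction hypothesis to each premise with the same ${!}\Xi'$, and then apply the very same rule; the ${!}$-zone of the conclusion coincides with that of the premises (or, for the multiplicative rules, is implicitly contracted from duplicates, and $({!}\Xi \cup {!}\Xi') \cup ({!}\Xi \cup {!}\Xi') = {!}\Xi \cup {!}\Xi'$), so everything goes through. The rule $({!}R)_{\mathrm{d}}$ is handled in the same way. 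For $({!}L)_{\mathrm{d}}$, the premise ${!}\Xi \cup \{{!}A\}; \Gamma, \Delta \yields C$ yields, by induction, a derivation of $({!}\Xi \cup \{{!}A\}) \cup {!}\Xi'; \Gamma, \Delta \yields C = ({!}\Xi \cup {!}\Xi') \cup \{{!}A\}; \Gamma, \Delta \yields C$, from which one application of $({!}L)_{\mathrm{d}}$ produces the desired d-sequent. For $(A)_{\mathrm{d}}$, the premise ${!}\Xi; \Gamma, c_1, \ldots, c_m, \Delta \yields C$ yields, by induction, ${!}\Xi \cup {!}\Xi'; \Gamma, c_1, \ldots, c_m, \Delta \yields C$, and applying $(A)_{\mathrm{d}}$ with the same principal formula produces $\{{!}\theta\} \cup {!}\Xi \cup {!}\Xi'; \Gamma, b_1, \ldots, b_n, \Delta \yields C$, as required.

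I do not anticipate a genuine obstacle here: this is a routine weakening-on-the-${!}$-zone lemma, justified by the fact that the only rule that actually consumes a formula from the ${!}$-zone is $(A)_{\mathrm{d}}$, and it only asks for the presence of a specific ${!}$-formula, which is preserved under enlargement of the zone. The only point requiring a line of care is the idempotence of set union when passing through $({!}L)_{\mathrm{d}}$ and the rules with shared context, so that the transformed derivation is syntactically well-formed at every node.
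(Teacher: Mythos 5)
Your proposal is correct and matches the paper's proof, which performs exactly the same transformation: add ${!}\Xi'$ to the ${!}$-zone of every d-sequent in the derivation and observe that all axioms and rules (including $({!}L)_{\mathrm{d}}$, $(A)_{\mathrm{d}}$, and the cuts) remain valid. Your write-up merely makes explicit the transfinite induction and the set-idempotence bookkeeping that the paper leaves implicit.
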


\begin{proof}
Let us take the derivation of ${!}\Xi; \Gamma \yields C$ and add ${!}\Xi'$ to the $!$-zone of each sequent. This results in a valid derivation of ${!}\Xi \cup {!}\Xi'; \Gamma \yields C$.
\end{proof}

This lemma actually says that $\dEIALm$ admits the following weakening rule for the ${!}$-zone, and we may add it as an inference rule without changing the set of derivable (without hypotheses) d-sequents:
\[
\infer[(W)_{\mathrm{d}}]
{{!}\Xi \cup {!}\Xi'; \Gamma \yields C}
{{!}\Xi; \Gamma \yields C}
\]

\begin{lemm}\label{bd-inv}
If a d-sequent ${!}\Xi; \Gamma, {!}A, \Delta \yields C$ is derivable in $\dEIALm$, then so is ${!}\Xi \cup \{ {!}A \}; \Gamma, \Delta \yields C$.
(Notice that $A$, being a formula under ${!}$, is required to be of the form $(b_1 \cdot \ldots \cdot b_n) \BS (c_1 \cdot \ldots \cdot c_m)$.)
\end{lemm}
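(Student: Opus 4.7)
The plan is to proceed by transfinite induction on the rank of a cut-free derivation of ${!}\Xi; \Gamma, {!}A, \Delta \yields C$; such a derivation exists by Proposition~\ref{dyadic-cut-elim}. The statement amounts to the invertibility of $({!}L)_{\mathrm{d}}$ with respect to the designated occurrence of ${!}A$, so the argument splits on the last rule $R$ of the derivation.

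The principal case is when $R = ({!}L)_{\mathrm{d}}$ and the introduced formula is the designated ${!}A$ itself: then the premise is already ${!}\Xi \cup \{{!}A\}; \Gamma, \Delta \yields C$, so we are done. In all other cases the designated ${!}A$ persists in each premise, and the strategy is standard: apply the induction hypothesis to each premise and then re-apply $R$ to the modified premises. For multiplicative, additive, and $({!}R)_{\mathrm{d}}$ rules, as well as for the infinitary $({}^* L_\omega)_{\mathrm{d}}$, the $!$-zone is shared between the conclusion and all premises, so adding $\{{!}A\}$ uniformly preserves the instance of $R$. For an occurrence of $({!}L)_{\mathrm{d}}$ introducing some other formula ${!}A' \ne {!}A$, the premise has $!$-zone ${!}\Xi \cup \{{!}A'\}$; applying the induction hypothesis yields ${!}\Xi \cup \{{!}A'\} \cup \{{!}A\}$ in the $!$-zone, and re-applying $({!}L)_{\mathrm{d}}$ to reintroduce ${!}A'$ into the antecedent lands us in ${!}\Xi \cup \{{!}A\}; \Gamma, \Delta \yields C$ as required.

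The only structurally delicate case is $R = (A)_{\mathrm{d}}$, since this rule genuinely changes the $!$-zone, moving from ${!}\Xi'$ in the premise to $\{{!}F\} \cup {!}\Xi'$ in the conclusion. Here one uses that the designated ${!}A$ cannot coincide with any of the single variables $b_i$ exposed by the rule, so ${!}A$ sits in the surrounding $\Gamma, \Delta$ of the premise. Applying the induction hypothesis relocates ${!}A$ into the premise's $!$-zone, giving ${!}\Xi' \cup \{{!}A\}$, and then $(A)_{\mathrm{d}}$ fires with the same active formula ${!}F$ to produce a conclusion with $!$-zone $\{{!}F\} \cup {!}\Xi' \cup \{{!}A\} = {!}\Xi \cup \{{!}A\}$, which is what we need.

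Among axioms, $(\Z L)_{\mathrm{d}}$ and $({}^* R_0)_{\mathrm{d}}$ remain axioms after the transformation, while $(\U R)_{\mathrm{d}}$ has an empty antecedent and so never carries a ${!}A$. The one axiom case that requires actual derivation work is $(\mathrm{Id})_{\mathrm{d}}$ instantiated as ${!}\Xi; {!}A \yields {!}A$: here one has to exhibit a derivation of ${!}\Xi \cup \{{!}A\}; \yields {!}A$, which I would build by writing $A = (b_1 \cdot \ldots \cdot b_n) \BS (c_1 \cdot \ldots \cdot c_m)$ and chaining $({!}R)_{\mathrm{d}}$, $({\BS}R)_{\mathrm{d}}$, $(A)_{\mathrm{d}}$ (firing the freshly available ${!}A$ in the $!$-zone), and finally $({\cdot}R)_{\mathrm{d}}$ applied to identities (or $(\U R)_{\mathrm{d}}$ when $m = 0$). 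I expect this $(A)_{\mathrm{d}}$ case, together with the tail of the $(\mathrm{Id})_{\mathrm{d}}$ subcase, to be the main obstacle: the former requires tracking the interaction between the designated ${!}A$ and the $!$-zone as the latter varies along the derivation, and the latter requires an explicit bottom-up construction rather than a mechanical rule re-application.
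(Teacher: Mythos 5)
Your proposal is correct in outline, but it takes a genuinely different route from the paper. The paper does not permute the $({!}L)_{\mathrm{d}}$ inference upward at all: it first derives the d-sequent $\{{!}A\}; {} \yields A$ explicitly (via $({\BS}R)_{\mathrm{d}}$, several $({\cdot}L)_{\mathrm{d}}$, $(A)_{\mathrm{d}}$, and an easy derivation of $c_1,\ldots,c_m \yields c_1\cdot\ldots\cdot c_m$), then obtains ${!}\Xi \cup \{{!}A\}; {} \yields {!}A$ by $(W)_{\mathrm{d}}$ and $({!}R)_{\mathrm{d}}$, weakens the hypothesis to ${!}\Xi \cup \{{!}A\}; \Gamma, {!}A, \Delta \yields C$, applies $(\mathrm{Cut})_{\mathrm{d}1}$ on ${!}A$, and finally invokes Proposition~\ref{dyadic-cut-elim} to remove the cut. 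That argument is a few lines and needs no case analysis on the last rule; what it buys is brevity at the price of leaning on cut elimination twice (for the cut it introduces). Your induction-on-derivations argument buys a cut-free, ``local'' permutation proof, but at the cost of inspecting every rule of $\dEIALm$ --- and it is exactly there that your write-up has two inaccuracies that need repair, though both are repairable with tools already in the paper.

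First, the claim that in all non-principal cases ``the designated ${!}A$ persists in each premise'' is false for the context-splitting rules $({\cdot}R)_{\mathrm{d}}$, $({\BS}L)_{\mathrm{d}}$, $({\SL}L)_{\mathrm{d}}$, $({}^{*}R_n)_{\mathrm{d}}$ (and the cut rules, if you did not restrict to cut-free derivations): there ${!}A$ occurs in exactly one premise. For the remaining premises you cannot apply the induction hypothesis; you must instead extend their ${!}$-zones by $\{{!}A\}$ using the weakening lemma for the ${!}$-zone (the admissible rule $(W)_{\mathrm{d}}$ established immediately before this lemma), and only then re-apply the rule with the uniform zone ${!}\Xi \cup \{{!}A\}$. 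Second, two smaller points: in your $(\mathrm{Id})_{\mathrm{d}}$ subcase the chain $({!}R)_{\mathrm{d}}$, $({\BS}R)_{\mathrm{d}}$, $(A)_{\mathrm{d}}$ does not typecheck as written, because after $({\BS}R)_{\mathrm{d}}$ the antecedent contains the single formula $b_1\cdot\ldots\cdot b_n$, whereas $(A)_{\mathrm{d}}$ needs the separate variables $b_1,\ldots,b_n$, so several applications of $({\cdot}L)_{\mathrm{d}}$ must be inserted (this is exactly the derivation the paper exhibits); and your case split ``${!}A' \ne {!}A$'' should also cover the introduction of a \emph{different occurrence} of the same formula ${!}A$, which is handled by the same computation since the ${!}$-zone is a set. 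With these repairs your argument goes through and is a legitimate alternative to the paper's proof.
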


\begin{proof}
First, we show that the d-sequent $\{{!}A\}; {}\yields A$ is derivable for any formula $A$ of the form $(b_1 \cdot \ldots \cdot b_n) \BS (c_1 \cdot \ldots \cdot c_m)$:
\[
\infer[({\BS}R)_{\mathrm{d}}]
{\{ {!}((b_1 \cdot \ldots \cdot b_n) \BS (c_1 \cdot \ldots \cdot c_m)) \}; \yields (b_1 \cdot \ldots \cdot b_n) \BS (c_1 \cdot \ldots \cdot c_m)}
{ \infer=[({\cdot}L)_{\mathrm{d}}]{\{ {!}((b_1 \cdot \ldots \cdot b_n) \BS (c_1 \cdot \ldots \cdot c_m)) \}; b_1 \cdot \ldots \cdot b_n \yields c_1 \cdot \ldots \cdot c_m}{\infer[(A)_{\mathrm{d}}]{\{ {!}((b_1 \cdot \ldots \cdot b_n) \BS (c_1 \cdot \ldots \cdot c_m)) \}; b_1,  \ldots, b_n \yields c_1 \cdot \ldots \cdot c_m}{\infer=[(\cdot R)_{\mathrm{d}}]{\{ {!}((b_1 \cdot \ldots \cdot b_n) \BS (c_1 \cdot \ldots \cdot c_m)) \}; c_1, \ldots, c_m \yields c_1 \cdot \ldots \cdot c_m}{\{ {!}((b_1 \cdot \ldots \cdot b_n) \BS (c_1 \cdot \ldots \cdot c_m)) \}; c_1 \yields c_1 & \ldots & \ldots ; c_m \yields c_m}}}}
\]
(For $n=1$ or $m=1$, the corresponding product rules get replaced with rules for 1.)

Now we derive ${!}\Xi \cup \{ {!}A \}; \Gamma, \Delta \yields C$ from ${!}\Xi; \Gamma, {!}A, \Delta \yields C$:
\[
\infer[(\mathrm{Cut})_{\mathrm{d}1}]
{{!}\Xi \cup \{ {!}A \}; \Gamma, \Delta \yields C}
{
\infer[({!}R)_{\mathrm{d}}]
{{!}\Xi \cup \{ {!}A \}; {} \yields {!}A}
{\infer[(W)_{\mathrm{d}}]{{!}\Xi \cup \{ {!}A \}; {} \yields A}
{\{ {!}A \}; {} \yields A}} & 
\infer[(W)_{\mathrm{d}}]{{!}\Xi \cup \{ {!} A \}; \Gamma, {!}A, \Delta \yields C}
{{!}\Xi; \Gamma, {!}A, \Delta \yields C}
}
\]
The derivation can be made cut-free by Proposition~\ref{dyadic-cut-elim}.
\end{proof}

This lemma establishes the invertibility of $({!}L)_{\mathrm{d}}$ or, in other words, the admissibility of the following rule:
\[
\infer[({!}L)_{\mathrm{d}}^{\mathrm{inv}}]
{{!}\Xi \cup \{ {!}A \}; \Gamma, \Delta \yields C}
{{!}\Xi; \Gamma, {!}A, \Delta \yields C}
\]

\begin{prop}\label{prop:dyadic-equivalence}
A sequent $\Pi \yields C$ is derivable in $\EIALm$ if and only if ${}; \Pi \yields C$ (with an empty $!$-zone) is derivable in $\dEIALm$.
\end{prop}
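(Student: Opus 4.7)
The plan is to prove both implications of the equivalence separately, reducing each to an induction on derivations. For the direction $\dEIALm \Rightarrow \EIALm$, I would fix some enumeration of each set ${!}\Xi$ occurring in the derivation and translate every d-sequent ${!}\Xi; \Gamma \yields C$ to the $\EIALm$ sequent ${!}\Xi, \Gamma \yields C$ (with ${!}\Xi$ now viewed as a sequence). Then I would verify that each dyadic rule can be simulated: the axioms of $\dEIALm$, which carry a ${!}$-zone, are obtained from their $\EIALm$ counterparts by iterated $({!}W)$; multiplicative rules with several premises (such as $({\mconj} R)_{\mathrm{d}}$ and $({}^* R_n)_{\mathrm{d}}$) require a cleanup phase using $({!}P_1)$, $({!}P_2)$, and $({!}C)$ to merge the copies of ${!}\Xi$ appearing once per premise; $({!}L)_{\mathrm{d}}$ becomes a permutation possibly followed by $({!}C)$ if ${!}A \in \Xi$ already; $({!}R)_{\mathrm{d}}$ translates directly via $({!}R)$; and the two dyadic cut rules translate to $\CUT$.

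The most involved case in this direction is $(A)_{\mathrm{d}}$. I would simulate it as follows: from the translated premise ${!}\Xi, \Gamma, c_1, \ldots, c_m, \Delta \yields C$, apply the invertibility of $({\mconj} L)$ (Corollary~\ref{coro:invertibility}) to merge $c_1, \ldots, c_m$ into $c_1 \mconj \ldots \mconj c_m$; then apply $({\BS} L)$ using the easily derivable $b_1, \ldots, b_n \yields b_1 \mconj \ldots \mconj b_n$ as left premise, introducing $(b_1 \mconj \ldots \mconj b_n) \BS (c_1 \mconj \ldots \mconj c_m)$; apply $({!}L)$ to affix the exponential; and finally use $({!}P_1)$, $({!}P_2)$, $({!}C)$ to place the resulting ${!}$-formula at the position dictated by the fixed enumeration of the zone. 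The boundary cases $m = 0$ and $n = 0$ are handled with $(\U L)$ and $(\U R)$.

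For the direction $\EIALm \Rightarrow \dEIALm$, by Theorem~\ref{Th:EACTomega-cutelim} I may assume that the given $\EIALm$ derivation of $\Pi \yields C$ is cut-free, and then show by induction that every $\EIALm$-derivable sequent $\Gamma \yields C$ yields a $\dEIALm$-derivation of ${}; \Gamma \yields C$. All rules not involving ${!}$ translate directly. For the ${!}$-rules, the main idea is to transiently move ${!}$-formulae into the ${!}$-zone using the invertibility of $({!}L)_{\mathrm{d}}$ (Lemma~\ref{bd-inv}) and push them back by $({!}L)_{\mathrm{d}}$ itself: $({!}W)$ is simulated by $(W)_{\mathrm{d}}$ followed by $({!}L)_{\mathrm{d}}$; the permutations $({!}P_1)$, $({!}P_2)$ and the contraction $({!}C)$ are simulated by invertibility followed by $({!}L)_{\mathrm{d}}$ at a new position, with $({!}C)$ becoming trivial because the zone is a set. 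For $({!}R)$, from ${}; {!}\Pi \yields B$ I would apply invertibility once per element of ${!}\Pi$ to reach ${!}\Pi; \yields B$, then $({!}R)_{\mathrm{d}}$, then $({!}L)_{\mathrm{d}}$ repeatedly to recover ${}; {!}\Pi \yields {!}B$. For $({!}L)$, the key observation is that the formula $A$ under ${!}$ has the restricted form $(b_1 \mconj \ldots \mconj b_n) \BS (c_1 \mconj \ldots \mconj c_m)$, so the $\dEIALm$-derivation of $\{{!}A\}; \yields A$ exhibited in the proof of Lemma~\ref{bd-inv} is available; combining it with $(W)_{\mathrm{d}}$, $(\mathrm{Cut})_{\mathrm{d}1}$, and $({!}L)_{\mathrm{d}}$ simulates $({!}L)$, and Proposition~\ref{dyadic-cut-elim} can be invoked if a cut-free resulting derivation is desired.

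I expect the main technical obstacle to be the simulation of $(A)_{\mathrm{d}}$ in the forward direction, which packages absorption into the ${!}$-zone together with residuation and product decomposition and demands careful bookkeeping of the ${!}$-zone via permutations and contractions; the boundary cases where $m$ or $n$ equals zero also require care. In the reverse direction, the subtlety is that the invertibility-based simulation of $({!}L)$ relies crucially on the restricted form of ${!}$-formulae, so the argument is tailored to $\EIALm$ rather than to the full system $\EIAL$.
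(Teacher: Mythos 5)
Your proposal is correct and follows essentially the same route as the paper's proof: the dyadic-to-original direction is proved in the same strengthened form (translating ${!}\Xi; \Gamma \yields C$ to ${!}\Xi, \Gamma \yields C$ and simulating $(A)_{\mathrm{d}}$ via $({\mconj}L)$, $({\BS}L)$ with $b_1,\ldots,b_n \yields b_1 \mconj \ldots \mconj b_n$, $({!}L)$ and structural rules), and the converse direction uses exactly the paper's devices, namely $(W)_{\mathrm{d}}$, the invertibility of $({!}L)_{\mathrm{d}}$, the derivable d-sequent $\{{!}A\}; {}\yields A$ from Lemma~\ref{bd-inv}, $(\mathrm{Cut})_{\mathrm{d}1}$, and Proposition~\ref{dyadic-cut-elim} to remove the cuts so introduced. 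The only cosmetic slips are that merging $c_1,\ldots,c_m$ into $c_1 \mconj \ldots \mconj c_m$ is a plain application of $({\mconj}L)$ rather than its invertibility, and that translating $(\mathrm{Cut})_{\mathrm{d}2}$ needs $({!}R)$ plus contraction in addition to $\CUT$ --- or one simply assumes the dyadic derivation cut-free, as the paper does.
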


\begin{proof}
For the ``if'' direction, we prove a stronger statement:
if a d-sequent ${!}\Xi; \Pi \yields C$ is derivable in $\dEIALm$, then the sequent ${!}\Xi, \Pi \yields C$ is derivable in $\EIALm$. In the latter ${!}\Xi$ stands for the sequence of all formulae contained in ${!}\Xi$, in an arbitrary order. The proof is straightforward transfinite induction on the derivation (we may suppose, due to cut elimination, that this derivation is cut-free). The interesting cases are, of course, $({!}L)_{\mathrm{d}}$, $({!}R)_{\mathrm{d}}$, and
$(A)_{\mathrm{d}}$. These are translated into $\EIALm$ as follows. For $({!}L)_{\mathrm{d}}$ we have
\[
\infer[({!}P_2)]
{{!}\Xi, \Gamma, {!}A, \Delta \yields C}
{{!}\Xi_1, {!}A, {!}\Xi_2, \Gamma, \Delta \yields C}
\]
(putting ${!}A$ to some place inside ${!}\Xi$; here and further ${!}\Xi_1, {!}\Xi_2 = {!}\Xi$). The rule $({!}R)_{\mathrm{d}}$ translates exactly into $({!}R)$. 

Finally, for $(A)_{\mathrm{d}}$ with $B = ((b_1 \cdot \ldots \cdot b_n) \BS (c_1 \cdot \ldots \cdot c_m))$ we construct the fol\-lowing derivation (with 1 instead of product, if $n$ or $m$ is zero):
\[
\infer[({!}C)]
{{!}\Xi_1, {!}B, {!}\Xi_2, \Gamma, b_1, \ldots, b_n, \Delta \yields C}
{\infer[({!}P_1)]{{!}\Xi_1, {!}B, {!}B, {!}\Xi_2, \Gamma, b_1, \ldots, b_n, \Delta \yields C}
{\infer[({!}L)]{{!}\Xi_1, {!}B, {!}\Xi_2, \Gamma, b_1, \ldots, b_n, {!}B, \Delta \yields C}
{\infer[({\BS}L)]{{!}\Xi_1, {!}B, {!}\Xi_2, \Gamma, b_1, \ldots, b_n, (b_1 \cdot \ldots \cdot b_n) \BS (c_1 \cdot \ldots \cdot c_m), \Delta \yields C}{b_1, \ldots, b_n \yields b_1 \cdot \ldots \cdot b_n  & \infer=[({\cdot}L)]{{!}\Xi_1, {!}B, {!}\Xi_2, \Gamma, c_1 \cdot \ldots \cdot c_m, \Delta \yields C}{{!}\Xi_1, {!}B, {!}\Xi_2, \Gamma, c_1, \ldots, c_m, \Delta \yields C}}}}}
\]

Other rules are translated straightforwardly, adding (where needed) permutation rules to move ${!}\Xi$, and contraction rules to merge two  or several copies of ${!}\Xi$ into one. For the axioms, if the ${!}$-zone is non-empty, it is added using the weakening rule ${!}W$.

The ``only if'' direction is also proved by transfinite induction on derivation. However, here we shall essentially use cut in the $\dEIALm$ derivation being constructed. Fortunately, we have cut elimination (Proposition~\ref{dyadic-cut-elim}), so in the end we can get rid of cuts.

Again, axioms and all rules not operating ${!}$ are translated ``as is,'' the ${!}$-zones being empty. The interesting rules are $({!}L)$,  $({!}R)$, $({!}P_{1,2})$, $({!}C)$, and $({!}W)$. 
Mo\-del\-ling of these rules is quite straightforward, given the handy admissible rules $(W)_{\mathrm{d}}$ and $({!}L)_{\mathrm{d}}^{\mathrm{inv}}$ (admissibility established above).

For $({!}L)$, we have
\[
\infer[({!}L)_{\mathrm{d}}]
{; \Gamma, {!}A, \Delta \yields C}
{\infer[(\mathrm{Cut})_{\mathrm{d}1}]{\{ {!}A \} ; \Gamma, \Delta \yields C}
{\{{!}A\}; \yields A & \infer[(W)_{\mathrm{d}}]{\{ {!}A \}; \Gamma, A, \Delta \yields C}{; \Gamma, A, \Delta \yields C}}}
\]
Derivability of $\{{!}A\}; \yields A$ was established in the proof of Lemma~\ref{bd-inv}.

For $({!}R)$, we have
\[
\infer=[({!}L)_{\mathrm{d}}]
{; {!}A_1, \ldots, {!}A_k \yields {!}B}
{\infer[({!}R)_{\mathrm{d}}]
{\{ {!}A_1, \ldots, {!}A_k \} ; \yields {!}B}
{\infer=[({!}L)_{\mathrm{d}}^{\mathrm{inv}}]{\{ {!}A_1, \ldots, {!}A_k \} ; \yields B}
{; {!}A_1, \ldots, {!}A_k \yields B}}}
\]

For $({!}P_1)$ and $({!}P_2)$, we have
\[
\infer[({!}L)_{\mathrm{d}}]
{; \Gamma, {!}A, \Pi, \Delta \yields C}
{\infer[({!}L)_{\mathrm{d}}^{\mathrm{inv}}]
{\{{!}A\}; \Gamma, \Pi, \Delta \yields C}
{; \Gamma, \Pi, {!}A, \Delta \yields C}}
\qquad
\infer[({!}L)_{\mathrm{d}}]
{; \Gamma, \Pi, {!}A, \Delta \yields C}
{\infer[({!}L)_{\mathrm{d}}^{\mathrm{inv}}]
{\{{!}A\}; \Gamma, \Pi, \Delta \yields C}
{; \Gamma, {!}A, \Pi, \Delta \yields C}}
\]

Finally, for $({!}W)$ and $({!}C)$ we have the following
\[
\infer[({!}L)_{\mathrm{d}}]
{;\Gamma, {!}A, \Delta \yields C}
{\infer[(W)_{\mathrm{d}}]{\{ {!}A \} ; \Gamma, \Delta \yields C}
{; \Gamma, \Delta \yields C}}
\qquad
\infer[({!}L)_{\mathrm{d}}]
{; \Gamma, {!}A, \Delta \yields C}
{\infer[({!}L)_{\mathrm{d}}^{\mathrm{inv}}]
{\{ {!}A \}; \Gamma, \Delta \yields C}
{\infer[({!}L)_{\mathrm{d}}^{\mathrm{inv}}]{\{ {!}A \}; \Gamma, {!}A, \Delta \yields C}
{; \Gamma, {!}A, {!}A, \Delta \yields C}}}
\]
In the derivation modelling $({!}C)$, the upper instance of $({!}L)_{\mathrm{d}}^{\mathrm{inv}}$ is applied with ${!}\Xi = \varnothing$ and the lower one is applied with ${!}\Xi = \{ {!}A \}$, having $\{ {!}A \} \cup \{ {!}A \} = \{ {!}A \}$.
\end{proof}

\section{A lower bound argument}\label{S:Tikhon}

Let us start with establishing the complexity lower bound for  reasoning from $\ast$-free hypotheses in $\ast$-con\-ti\-nu\-ous action lattices. Recall that, according to Theorem \ref{theo-strong-completeness}, semantic entailment in $\ast$-con\-ti\-nu\-ous action lattices is equivalent to derivability from hypotheses in $\IAL$. The main theorem of this section establishes the $\Sigma^0_{\omega^{\omega}}$-hard\-ness of derivability from $*$-free hypotheses in $\IAL$ in the following strong sense.
\begin{theo}\label{Th:lower-bound}
    There exists a finite set $\Hc$ of monoidal inequations such that $H( \omega^\omega)$ is many-one reducible to the set of sequents derivable in $\IAL$ from $\Hc$.
\end{theo}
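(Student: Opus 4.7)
The plan is to construct, effectively in $n \in \omega$, a sequent $s_n$ such that $n \in H(\omega^\omega)$ iff $s_n$ is $\IAL$-derivable from a fixed finite set $\Hc$ of monoidal inequations (which are automatically $*$-free, giving the stronger form of Theorem~\ref{Th:lower-bound}). First, via Theorem~\ref{Th:deduction} and Proposition~\ref{prop:dyadic-equivalence}, I would transfer the problem to derivability in the dyadic system $\dEIALm$: each hypothesis $b_1, \ldots, b_k \yields c_1, \ldots, c_m$ is replaced by a $!$-formula $!((b_1 \mconj \cdots \mconj b_k) \BS (c_1 \mconj \cdots \mconj c_m))$ living in the $!$-zone and applied only through the focused rule $(A)_{\mathrm{d}}$. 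This rigid handling of hypotheses is precisely what lets one match the rank of a successful derivation against the ordinal rank from the inductive definition of the sets $H(\alpha)$.

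Second, I would introduce a computational device whose acceptance problem is $\Sigma^0_{\omega^\omega}$-hard in a uniform way. A natural candidate is a Turing-like machine $\TM$ equipped with a descending ordinal counter taking values in $\Ns$. At a successor digit a local step decrements the counter; at a limit digit $m_{k+1} > 0$ with $m_0 = \cdots = m_k = 0$, the machine nondeterministically chooses natural numbers $m_0', \ldots, m_k'$ and proceeds with the lower counter value. Under the identification $\Ns \cong \omega^\omega$ via $\nu$, the set of inputs from which $\TM$ has an accepting run coincides (modulo a computable coding) with $H(\omega^\omega)$, by the clause in the definition of $H$ at limit ordinals.

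Third, I would encode configurations of $\TM$ as sequences of propositional variables (for states, tape cells, counter digits, and a distinguished accept marker $\VARok$), and transitions as monoidal inequations collected into $\Hc$. Each local transition is modelled directly by a single application of some hypothesis via $(A)_{\mathrm{d}}$. The countably-branching choice demanded by a limit transition is supplied by Kleene star: an auxiliary starred subformula in the antecedent unfolds through $({}^* L_\omega)_{\mathrm{d}}$ into all candidate values of the next digit. Iterating this construction produces derivations whose rank is bounded by $\omega^k$ when the counter has at most $k$ nonzero coordinates, so the whole tower $\omega, \omega^2, \omega^3, \ldots$ is realised below $\omega^\omega$.

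Finally, the reduction is proved correct in two directions. Completeness ($n \in H(\omega^\omega)$ implies $\Hc \vdash s_n$) goes by transfinite induction on the rank of the corresponding $\TM$-configuration, building the derivation layer by layer. The soundness direction ($\Hc \vdash s_n$ implies $n \in H(\omega^\omega)$) is the main obstacle, since one must rule out spurious derivations that do~not correspond to any genuine $\TM$-run. This is where the dyadic setup pays off: by Proposition~\ref{dyadic-cut-elim} and the invertibilities from Corollary~\ref{coro:invertibility}, I may assume a cut-free derivation in a normal form where every $(A)_{\mathrm{d}}$ faithfully encodes a machine transition and every $({}^* L_\omega)_{\mathrm{d}}$ a limit step. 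A transfinite induction on derivation rank then extracts an accepting run of $\TM$ and yields $n \in H(\omega^\omega)$.
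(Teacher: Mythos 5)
There is a genuine gap, and it sits exactly at the heart of the argument: your source of $\Sigma^0_{\omega^\omega}$-hardness. You posit a Turing-like machine with a strictly descending ordinal counter in $\Ns$ and purely existential (nondeterministic) choices at limit digits, and you assert that its acceptance problem coincides with $H(\omega^\omega)$ ``by the clause in the definition of $H$ at limit ordinals.'' This is not justified and is in fact false as stated: since the counter value strictly decreases along a run and the ordering is well-founded, every run of such a machine is finite, so ``there exists an accepting run'' is a $\Sigma^0_1$ statement, nowhere near $H(\omega^\omega)$. The limit clause of $H$ is only a uniformization device; the complexity comes from the successor clause $H(\beta+1)=\mathtt{J}(H(\beta))$, i.e.\ the halting problem relative to an oracle, and simulating it requires handling \emph{negative} oracle answers, which forces an alternation of existential and universal infinitary branching. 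The paper supplies this by first reducing $H(\omega^\omega)$ to the set $\mathbf{T}$ of true computable infinitary propositional sentences of rank below $\omega^\omega$ (Proposition~\ref{prop:hardness-T}, proved by effective transfinite recursion in the appendix, where the negated oracle answers become the $\Pi$-conjuncts), and only then encoding the $\Sigma/\Pi$ alternation into sequents over the monoidal hypotheses $\Hc$.

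A related confusion is your use of $({}^* L_\omega)$: you describe the starred antecedent formula as supplying ``the countably-branching \emph{choice}'' of the next counter digit, but $({}^* L_\omega)$ is conjunctive --- the conclusion is derivable only if \emph{all} premises $\Gamma, A^n, \Delta \yields C$ are, so it models universal branching, not nondeterministic choice. In the paper these two roles are carefully separated: existential (disjunctive) steps are realised by genuine proof-search choice in the string-rewriting hypotheses (the rules $\VARex \to \VARsym_2\,\VARex$ and $\VARex \to \VARgo$, which let the derivation pick a witness $y$), while universal (conjunctive) steps are realised by a starred variable $\VARsym_2^\ast$ in the antecedent of $\FOREn_\forall$, whose decomposition by the $\omega$-rule forces derivability for every candidate conjunct; a separate use of the star, in the energy formulas $\FOREn(k)$, manages the ordinal budget $\omega^{k_1}+\cdots+\omega^{k_M}$ below $\omega^\omega$. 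Your dyadic/focused normal-form strategy for the soundness direction is reasonable in spirit (the paper does something similar via the auxiliary calculus $\IALbsc$ and Corollary~\ref{coro:lower-bound-basic-derivation}), but without a correct $\Sigma^0_{\omega^\omega}$-hard target problem and without a mechanism that realises the $\forall$ side of the alternation, the reduction you outline cannot reach $H(\omega^\omega)$.
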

Recall that monoidal inequations are represented as sequents of the form $x_1,\ldots,x_n \yields y_1 \mconj \ldots \mconj y_m$ where $x_i,y_j$ are variables. Compare this with non-expanding hypotheses, for which, as we shall prove in Section \ref{S:Stepan}, the derivability problem is $\Pi^0_1$-comp\-le\-te.

The proof of Theorem \ref{Th:lower-bound} uses techniques similar to those used in \cite{Pshenitsyn2024} to show that the derivability problem for infinitary action logic with multiplexing is $\Sigma^0_{\omega^{\omega}}$-hard (although details of proofs are different). We shall construct a reduction from a $\Sigma^0_{\omega^{\omega}}$-comp\-le\-te set $\mathbf{T}$, which consists of indices of computable infinitary formulae with rank less than $\omega^\omega$ in the simplest language (they are called {\em infinitary propositional sentences} in \cite{Montalban2022}), to the set of sequents derivable in $\IAL$ from $\Hc$. The definition of infinitary propositional sentences we provide below is similar to that from \cite[Section 7]{AshKnight2000}, yet adjusted to match other definitions in this paper.

Let $\langle \cdot,\cdot,\cdot\rangle$ be a computable bijection from $\omega \times \omega \times \omega$ onto $\omega$, say, $\langle i,j,k \rangle := \mathtt{c}(i,\mathtt{c}(j,k))$. Also let $\overline{\varepsilon} := 1 - \varepsilon$, for $\varepsilon \in \{0,1\}$.

\begin{definition}\label{Df:infinitary_propositional_sentences}
    For $\alpha < \omega^{\omega}$, let 
    \[
    S^{\Sigma}_{\alpha}\ :=\
    {\left\{\langle 0, \sharp \alpha, e \rangle \mid e \in \omega \right\}}
    \quad \mbox{and} \quad
    S^{\Pi}_{\alpha}\ :=\
    {\left\{\langle 1, \sharp \alpha, e \rangle \mid e \in \omega \right\}} .
    \]
    Elements of these sets are called \emph{$\Sigma_\alpha$-in\-di\-ces} and \emph{$\Pi_\alpha$-in\-di\-ces} respectively. The index $\langle \varepsilon, \sharp 0, e \rangle$ represents the sentence $\bot$ if $e=0$ and $\top$ otherwise (for $\varepsilon=0,1$). For $\alpha > 0$, the index $\langle 0, \sharp \alpha, e \rangle$ corresponds to the infinitary disjunction of infinitary propositional sentences such that the set of their indices is
    \[
    \left\{ k \mid {\UCF\left(e,k\right)}\ \text{converges and}\ {\left( \exists \beta < \alpha \right)}\, {\left( \exists e^\prime \right)}\, {\left( k = \langle 1, \sharp \beta, e^\prime \rangle \right)} \right\} .
    \]
    Symmetrically, the index $\langle 1, \sharp \alpha, e \rangle$ represents the infinitary conjunction of infinitary propositional sentences whose indices form the set
    \[
    \left\{k \mid {\UCF(e,k)}\ \text{converges and}\ {\left( \exists \beta < \alpha \right)}\, {\left( \exists e^\prime \right)}\, {\left( k = \langle 0, \sharp \beta, e^\prime \rangle \right)} \right\} .
    \]
    Finally, let $\mathbf{T}$ be the set of indices of all true infinitary propositional sentences defined thusly.
\end{definition}
\begin{prop}\label{prop:hardness-T}
    The set $H \left( \omega^{\omega} \right)$ is many-one reducible to $\mathbf{T}$.
\end{prop}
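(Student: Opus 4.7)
The plan is to proceed by effective transfinite induction along the computable well-ordering $\Ns$ of Section~\ref{subsec-ha}: for each $\alpha < \omega^{\omega}$ and each $n \in \omega$, construct uniformly computably in $\sharp \alpha$ and $n$ a $\Sigma_{\alpha}$-index $\phi^{+}_{\alpha}(n)$ and a $\Pi_{\alpha}$-index $\phi^{-}_{\alpha}(n)$ satisfying
\[
{\phi^{+}_{\alpha}(n) \in \mathbf{T}}\ \Longleftrightarrow\ {n \in H(\alpha)}
\quad \text{and} \quad
{\phi^{-}_{\alpha}(n) \in \mathbf{T}}\ \Longleftrightarrow\ {n \notin H(\alpha)} .
\]
The desired reduction $H(\omega^\omega) \leqslant \mathbf{T}$ is then obtained by decoding: given $n$, test whether $n = \mathtt{c}(m, \sharp \beta)$ for some (necessarily unique) $\beta < \omega^{\omega}$ and $m \in \omega$, and if so output $\phi^{+}_{\beta}(m)$; otherwise output a false index.

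For the base case $\alpha = 0$, since $H(0) = \varnothing$, I would set $\phi^{+}_{0}(n) := \langle 0, \sharp 0, 0 \rangle$ (representing $\bot$) and $\phi^{-}_{0}(n) := \langle 1, \sharp 0, 1 \rangle$ (representing $\top$). For a non-zero limit $\alpha < \omega^{\omega}$, using $H(\alpha) = \{\mathtt{c}(m, \sharp \beta) \mid m \in H(\beta),\ \beta < \alpha\}$, I decode $n$; if $n = \mathtt{c}(m, \sharp \beta)$ with $\beta < \alpha$, I effectively re-encode the $\Sigma_{\beta}$-index $\phi^{+}_{\beta}(m)$ as a $\Sigma_{\alpha}$-index enumerating the same list of $\Pi_{\gamma}$-disjuncts (all with $\gamma < \beta < \alpha$, and so still admissible as disjuncts for a $\Sigma_\alpha$-index); otherwise $\phi^{+}_{\alpha}(n) := \bot$. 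The construction of $\phi^{-}_{\alpha}$ is dual.

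The main work is the successor case $\alpha = \beta + 1$. Here $n \in H(\beta + 1) \Leftrightarrow \mathtt{U}^{H(\beta)}(n,n) \downarrow$; unpacking, this holds iff there exist a stage $s$ and a finite oracle use $\sigma : D \to \{0, 1\}$ witnessing halting at stage $s$ such that
\[
\bigwedge_{k \in \sigma^{-1}(1)} (k \in H(\beta))\ \wedge\ \bigwedge_{k \in \sigma^{-1}(0)} (k \notin H(\beta)) .
\]
By the inductive hypothesis the positive and negative literals are captured by $\Sigma_{\beta}$-indices $\phi^{+}_{\beta}(k)$ and $\Pi_{\beta}$-indices $\phi^{-}_{\beta}(k)$ respectively, but to assemble a genuine $\Sigma_{\beta + 1}$-index each $(s, \sigma)$-disjunct must itself be $\Pi_{\beta}$. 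My plan is to expand each positive $\phi^{+}_{\beta}(k)$ into its computably enumerable list of $\Pi_{\gamma}$-disjuncts (with $\gamma < \beta$), distribute the inner disjunctions outward over the finite conjunction, and for each choice of witnesses wrap the resulting finite conjunction of $\Pi$-sentences of rank $< \beta$ into a single $\Pi_{\beta}$-index. The outer countable disjunction over all pairs $(s, \sigma)$ and all witness choices is then the desired $\Sigma_{\beta + 1}$-index $\phi^{+}_{\beta + 1}(n)$; $\phi^{-}_{\beta + 1}$ is dual.

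Uniformity in $\sharp \alpha$ is ensured throughout by standard use of the $s$-$m$-$n$ theorem on the computable notation $\rho^{-1} \circ \nu$ for ordinals below $\omega^{\omega}$. The principal obstacle I expect is the bookkeeping in the successor step: a naive conjunction of $\Sigma_{\beta}$ and $\Pi_{\beta}$ indices does not sit at a clean level of the hierarchy, and the eager expand-and-distribute manoeuvre is precisely what forces each outer disjunct to be \emph{bona fide} $\Pi_{\beta}$ and thereby aligns the construction with the intended $\Sigma_{\beta + 1}$ shape.
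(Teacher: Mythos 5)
Your proposal is correct and follows essentially the same route as the paper's Appendix~\ref{appendix}: an effective transfinite recursion producing, uniformly in $\sharp \beta$, indices of infinitary propositional sentences that are true exactly on $H(\beta)$ (the paper's ``$\beta$-appropriate'' functions), with the successor step encoding the jump via finite oracle strings $\sigma$ with $\UCF^{\sigma}(n,n)\downarrow$, the limit step handled by decoding $\mathtt{c}(k,\sharp\alpha')$ and promoting indices, and the final reduction given by the same decoding map. Your explicit expand-and-distribute manoeuvre and the dual family $\phi^{-}_{\beta}$ play the role of the paper's closure facts about indices (effective negation, finite conjunctions, c.e.\ disjunctions); the only slip is that after distribution the negative literals are genuinely $\Pi_{\beta}$ rather than of rank $<\beta$, but a finite conjunction of $\Pi_{\beta}$-sentences is still effectively $\Pi_{\beta}$, so the wrap-up goes through unchanged.
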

This proposition is proved in essentially the same way as \cite[Lemma V.15]{Montalban2022}; cf. also \cite[Theorem 7.9]{AshKnight2000}. To make the paper self-contained, we provide a proof of this proposition in the Appendix.

Another ingredient we need is a representation of computable functions as string rewriting systems in a way convenient for our purposes. Recall that a string rewriting system $\SR$ over an alphabet $A$ is a finite set of rules of the form $w_1 \to w_2$ where $w_1$, $w_2$ are arbitrary strings from $A^\ast$. We write $u w_1 v \Rightarrow_{\SR} u w_2 v$ if $(w_1 \to w_2) \in \SR$. Also, recall that a Turing machine is a tuple $\TM = (Q,C, T, \qbgn, \qacc)$ where $Q$ is a finite set of states; $C$ is the alphabet of tape symbols contaning the blank symbol $\lambda \in C$; $T \subseteq \left(Q \setminus \{\qacc\} \right) \times C \times Q \times C \times \{R,L,N\}$ is the transition relation; $\qbgn \in Q$ is the initial state; $\qacc \in Q$ is the accepting state. 

It is well known that Turing machines can be encoded by string rewriting systems \cite{BookOtto1993}. Indeed, a Turing machine configuration can be represented as a string $uaqv$ where $uav$ is a part of the tape containing all non-blank symbols, $q$ is the state of a Turing machine, and the head points to the distinguished symbol $a$. We assume that, when a Turing machine halts in the accepting state $\qacc$, its head points to the rightmost blank symbol before the output. 

Let us present how Turing machines are transformed into string rewriting systems; this particular transformation is convenient for the further construction. 
\begin{definition}\label{def:SR-from-TM}
    Let $f \subseteq A^\ast \times B^\ast$ be a partial computable function calculated by a Turing machine $\TM = (Q,C, T, \qbgn, \qacc)$ where $C \supseteq A \cup B$; let $l,l^\prime,r,e$ be symbols not from $C \cup Q$. Then, let us define a string rewriting system $\SR_{l,r,e}(\TM)$ that consists of the following rules: 
    \begin{align*}
    & a q \to b q^\prime, &&\mbox{for each } (q,a,q^\prime,b,N) \in T; \\
    & l q \to l b q^\prime, &&\mbox{for each }  (q,\lambda,q^\prime,b,N) \in T; \\
    & a q \to q^\prime b, &&\mbox{for each }  (q,a,q^\prime,b,L) \in T; \\
    & l q \to l q^\prime b, &&\mbox{for each } (q,\lambda,q^\prime,b,L) \in T; \\
    & a q c \to b c q^\prime, &&\mbox{for each }  (q,a,q^\prime,b,R) \in T,\ c \in C; \\
    & a q r \to b \lambda q^\prime r, &&\mbox{for each }  (q,a,q^\prime,b,R) \in T; \\
    & l q r \to l b \lambda q^\prime r, &&\mbox{for each }  (q,\lambda,q^\prime,b,R) \in T;  \\
    & \lambda \qacc \to \qacc; \\
    & l \qacc \to l l^\prime \qacc; \\
    & l^\prime \qacc c \to c l^\prime \qacc, &&\mbox{for each }  c \in B; \\
    & l^\prime \qacc \lambda \to l^\prime \qacc; \\
    & l^\prime \qacc r \to e.
    \end{align*}
\end{definition}
Here $l,r$ are ``borders'' of a tape modelled as a string and $e$ is a distinguished symbol that indicates that the computation has ended. The symbol $l^\prime$ plays a technical role, and we do not include it in the notation $\SR_{l,r,e}(\TM)$. Note that the left-hand side of each rule in $\SR$ contains a symbol from $Q$. The correspondence between $\TM$ and $\SR = \SR_{l,r,e}(\TM)$ can be described as follows.
\begin{prop}\label{prop:Church-Turing}
    For any $u \in A^\ast$ and $v \in D^\ast$ where $D = C \cup Q \cup \{l, l^\prime, r, e\}$, we have $l u \qbgn r \Rightarrow_{\SR}^\ast v e$ if and only if $v = l f(u)$.
\end{prop}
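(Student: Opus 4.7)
The plan is to set up an encoding of $\TM$ configurations as strings of $\SR$ and show that $\SR$ simulates $\TM$ step-by-step until the cleanup phase, which uniquely produces the desired output format. A $\TM$ configuration with state $q$, tape contents $w_1 a w_2$ (with $a$ under the head) will be encoded as $l w_1 a q w_2 r$, where we keep only the visited portion of the tape explicitly and boundary excursions (the head trying to leave that portion) are handled via the $\SR$ rules having $l$ or $r$ on their left-hand side.

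For the ``if'' direction, we simulate the computation of $f(u)$ by $\TM$ starting from $l u \qbgn r$, each transition of $\TM$ mapping to a single rule application in $\SR$. By the halting convention, the halting configuration has the form $l \lambda^i \qacc f(u) \lambda^j r$ for some $i, j \geqslant 0$. Then the cleanup rules absorb the leading blanks via $\lambda \qacc \to \qacc$, insert the marker via $l \qacc \to l l^\prime \qacc$, sweep $l^\prime \qacc$ rightward past $f(u)$ via $l^\prime \qacc c \to c l^\prime \qacc$ for $c \in B$, absorb trailing blanks via $l^\prime \qacc \lambda \to l^\prime \qacc$, and finally apply $l^\prime \qacc r \to e$, yielding $l f(u) e$.

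For the ``only if'' direction, the key observation is that every rule in $\SR$ has a left-hand side containing exactly one ``control symbol'' from $Q \cup \{l^\prime\}$, and only the terminating rule $l^\prime \qacc r \to e$ removes this symbol (together with $r$, while introducing $e$). By induction on the length of the derivation from $l u \qbgn r$, every reachable string is either a $\TM$-configuration encoding $l w_1 a q w_2 r$, or a cleanup intermediate form of the shape $l \lambda^i \qacc y \lambda^j r$ or $l y l^\prime \qacc \lambda^j r$ with $y \in B^\ast$, or already of the form $l y e$ with $y \in B^\ast$. Determinism of $\TM$ and of the cleanup phase then forces any derivation reaching a string of the form $v e$ to correspond to the accepting computation of $\TM$ on $u$ followed by the cleanup, giving $v = l f(u)$. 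The main obstacle will be the case analysis over all rewriting rules to verify that this structural invariant is preserved and that the transitions between configuration, cleanup, and terminal forms happen only via the designated rules; once this invariant is established, both directions follow routinely.
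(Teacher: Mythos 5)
Your proposal is correct and follows essentially the same route as the paper: a step-by-step simulation argument (each rule application to a configuration string $l w_1 a q w_2 r$ mirrors a $\TM$ transition) plus the observation that a string ending in $e$ can only arise via the unique cleanup sequence $l\lambda^m \qacc w \lambda^n r \Rightarrow^\ast l\qacc w\lambda^n r \Rightarrow l l'\qacc w \lambda^n r \Rightarrow^\ast l w l'\qacc r \Rightarrow l w e$, which is exactly the paper's proof. Only a cosmetic caveat: your "exactly one control symbol from $Q \cup \{l'\}$" should be phrased as in the paper ("each left-hand side contains a symbol from $Q$"), since the sweep rules contain both $l'$ and $\qacc$, and your configuration invariant should also cover the head-at-left-border shape $l\,q\,w_2\,r$ — neither point affects the substance of the argument.
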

\begin{proof}
    One can examine Definition \ref{def:SR-from-TM} and verify that any rule of $\SR = \SR_{l,r,e}(\TM)$ applied to a string of the form $l w_1 a q w_2 r$ (where $q\ne \qacc$ and $w_1,w_2 \in C^\ast$) corresponds to a computation step of $\TM$. Assume that one reaches an accepting configuration $l \lambda^m \qacc w \lambda^n r$ (where $w \in B^\ast$). Then the only possible derivation ending by a string with the suffix $e$ looks as follows:
    \begin{multline*}
        l \lambda^m \qacc w \lambda^n r
        \ \Rightarrow^\ast_{\SR} \ 
        l \qacc w \lambda^n r
        \ \Rightarrow_{\SR}\ 
        l l^\prime \qacc w \lambda^n r
        \\ \Rightarrow^\ast_{\SR} \ 
        l w l^\prime \qacc \lambda^n r
        \ \Rightarrow^\ast_{\SR}\ 
        l w l^\prime \qacc r
        \ \Rightarrow_{\SR} \ 
        l w e.
    \end{multline*}
    So, indeed, if a string of the form $ve$ is obtained from $lu\qbgn r$, then $v=lw$ where $w$ is the output of $\TM$ on the input $u$.
\end{proof}

\begin{definition}\label{Df:check-functions}
    Let us fix two symbols $\VARsym_1$, $\VARsym_2$ and define functions $\fun_0$ and $\fun_1$, called the {\em base-check function} and the {\em step-check function} respectively.
    \begin{enumerate}[i.]

    \item The domain of $\fun_{0}$ is a subset of $\{\VARsym_{1}\}^\ast$. If $x$ is a $\Sigma_0$- or a $\Pi_0$-index and $x \in \mathbf{T}$, then $\fun_{0}\left(\VARsym_{1}^x\right) := \VARsym_{1}$ (here $p_1^x = p_1 \ldots p_1$ with $x$ copies of $p_1$). If $x$ is a $\Sigma_\alpha$- or a $\Pi_\alpha$-index for $\alpha>0$, then $\fun_{0} (\VARsym_{1}^x) := \VARsym_{1}^x \VARsym_{2}$. Otherwise, $\fun_{0}(\VARsym_{1}^x)$ is undefined.

        \item The domain of $\fun_{1}$ is $\{\VARsym_{1},\VARsym_2\}^\ast$. Assume that an input $w$ is of the form $w = \VARsym_{1}^{x_1} \VARsym_{2}^{x_2}$ such that:
		\begin{itemize}
            \item $x_1 = \langle \varepsilon, \sharp \alpha, e\rangle$ is a $\Sigma_\alpha$- or a $\Pi_\alpha$-index for $\alpha > 0$;
            
            \item $x_2 = \mathtt{c}\left( k, t \right)$ such that $k = \langle \overline{\varepsilon}, \sharp \beta, e^\prime \rangle$ for some $\beta < \alpha$, 
            and $\UCF\left( e, k \right)$ converges in at most $t$ steps. (Recall that $\overline{\varepsilon}=1-\varepsilon$, so $k$ is a $\Pi_\beta$-index if $x_1$ is a $\Sigma_\alpha$-index, and $k$ is a $\Sigma_\beta$-index if $x_1$ is a $\Pi_\alpha$-index.)
		\end{itemize}
        Then $\fun_{1}(w) := \VARsym_{1}^{k} \VARsym_{2}$. Otherwise, $\fun_{1}(w) := \VARsym_{1}$.
    \end{enumerate}
\end{definition}
It is not hard to see that both functions are computable. Obviously, they are closely related to Definition \ref{Df:infinitary_propositional_sentences}; we shall use them to simulate the inductive definition of infinitary propositional sentences within $\IAL$.

Let us describe the main construction used to prove Theorem \ref{Th:lower-bound}. Let $\VARsym_{1}$, $\VARsym_{2}$, $\VARSigma$, $\VARPi$, $\VARlt$ (``left''), $\VARrt$ (``right''), $\VARok$ (``ok''), $\VARen$ (``energy''), $\VARfn$ (``finish''), $\VARex$ (``existential''), $\VARgo$ (``go''), $\VARfl$ (``fail'') be variables. Let us regard variables of the logic $\IAL$ and symbols appearing in string rewriting systems as the same objects; i.e. we can use symbols as variables and vice versa.

\begin{definition}\label{Df:construction-hypotheses}
    For $i = 0, 1$, let $\TM_i := \left( Q_i, C_i, T_i, \qbgn_i, \qacc_i \right)$ be a Turing machine that computes the function $\fun_i$. Without loss of generality, let us assume that $Q_0 \cap Q_1 = \varnothing$. Let us define the string rewriting system $\SR_i := \SR_{\VARlt,\VARrt,\VARfn}(\TM_i)$ for $i=0,1$. (The variables $\VARlt,\VARrt$ are used as borders when modelling $\TM_i$ as $\SR_i$, and the variable $\VARfn$ indicates that a computation of $\TM_i$ is over.) Let
    \[
    \SR\ :=\
    {\SR_0 \cup \SR_1 \cup \{\VARex \to \VARsym_{2}\, \VARex, \VARex \to \VARgo\}} .
    \]
    Note that, for each rule of $\SR$, its left-hand side contains exactly one element of $Q_0 \cup Q_1 \cup \{\VARex\}$. 
    Finally, let 
    \[
    \Hc\ :=\
    {\left\{ b_1, \ldots, b_m \yields c_1 \mconj \ldots \mconj c_n \mid (b_1\ldots b_m \to c_1\ldots c_n) \in \SR \right\}} .
    \]
\end{definition}
We shall show that $\Hc$ is a desirable set of hypotheses, derivability from which is $\Sigma^0_{\omega^\omega}$-hard.

In what follows, we shall use the following notational convention for reducing parentheses in formulae: $\BS$ is left-associative and takes precedence over $\cdot$ and $\&$.

\begin{definition}\label{Df:construction-lower-bound}
Below we present the formulae used in the main construction (Lemma \ref{lemm:lower-bound-main}).
\begin{itemize}
    \item $\FOROk := \VARok \BS \VARok$.
    \item $\FORCmp(q,h) := 
		\VARgo \BS \left( q \mconj \VARrt \mconj
		\VARfn \BS \left( \left( \VARsym_{1} \BS h(\VARsym_{1}) \right) \aconj \left( \VARsym_{2} \BS h(\VARsym_{2}) \right) \right) \right)$. \\
    This formula's purpose is to initiate computation of a Turing machine with the initial state $q$ and then to analyse its results. The function $h$'s domain is $\{\VARsym_{1},\VARsym_{2}\}$; $h(\VARsym_i)$ is a formula of $\IAL$. Informally, the formula $\left( \VARsym_{1} \BS h(\VARsym_{1}) \right) \aconj \left( \VARsym_{2} \BS h(\VARsym_{2}) \right)$ says: ``if there is $\VARsym_i$ to the left, replace it by $h(\VARsym_i)$.''
    \item $\FOREn_{\exists} := 
    \VARgo \mconj 
    \FORCmp\left(\qbgn_0,h_0\right) \mconj 
    \left(\FOROk \aconj \VARgo \BS \VARex \right) \mconj
    \left(\FOROk \aconj \FORCmp\left(\qbgn_1, h_{\exists}\right) \right)$\\ where $h_0(\VARsym_{1}) = \VARok$, $h_0(\VARsym_{2}) = \VARgo$; $h_{\exists}(\VARsym_{1}) = \VARfl$, $h_{\exists}(\VARsym_{2}) = \VARPi \mconj \VARen$; 
    \item $\FOREn_{\forall} := 
		\VARgo \mconj 
        \FORCmp\left(\qbgn_0,h_0\right) \mconj 
		\left(\FOROk \aconj \VARgo \BS \left( \VARsym_{2}^\ast \mconj \VARgo \right) \right) \mconj
		\left( \FOROk \aconj \FORCmp\left(\qbgn_1, h_{\forall} \right) \right)$\linebreak
    where $h_0$ is the same function as the one defined above, and $h_{\forall}(\VARsym_{1}) = \VARok$, $h_{\forall}(\VARsym_{2}) = \VARSigma \mconj \VARen$.
    \item $\FOREn(0) := \FOROk \aconj \VARen \BS \left( \left( \VARSigma \BS \FOREn_{\exists} \right) \aconj \left( \VARPi \BS \FOREn_{\forall} \right) \right)$.
    \item $\FOREn(k+1) := \FOROk \aconj \VARen \BS (\VARen \mconj \FOREn(k)^\ast)$.
    \item $\FORBrk := \FOROk \aconj \VARen \BS \left( \left( \VARSigma \BS \VARsym_{1}^\ast \BS \VARok \right) \aconj \left( \VARPi \BS \VARsym_{1}^\ast \BS \VARok \right) \right)$.
\end{itemize}
\end{definition}
Recall that $\Upsilon_{\Hc}$ consists of formulae of the form 
\[
(b_1 \mconj \ldots \mconj b_m) \BS (c_1 \mconj \ldots \mconj c_n),
\]
where $b_1,\ldots, b_m \yields c_1 \mconj \ldots \mconj c_n$ belongs to $\Hc$.

\begin{lemm}\label{lemm:lower-bound-main}
Let $k_1 \geqslant \ldots \geqslant k_M$ be a non-increasing sequence of natural numbers, and $x = \langle \varepsilon, \sharp \alpha, e \rangle$, where $\varepsilon \in \{0,1\}$, $\alpha < \omega^\omega$, $e \in \omega$. Consider the sequent
    \begin{equation}\label{eqn:seq-main}
        {!}\Upsilon_{\Hc}, \VARlt, \VARsym_{1}^{x}, \VARsym_{\mathsf{Q}}, \VARen, \FOREn(k_M), \ldots, \FOREn(k_1), \FORBrk \yields \VARlt \mconj \VARok
    \end{equation}
where $\mathsf{Q} = \exists$ if $\varepsilon=0$ and $\mathsf{Q}=\forall$ if $\varepsilon=1$.
		\begin{enumerate}[i.]
            \item If (\ref{eqn:seq-main}) is derivable in $\EIAL$ and $\omega^{k_1}+\dotsc+\omega^{k_M} > \alpha$, then $x \in \mathbf{T}$.
            \item If $x \in \mathbf{T}$, then (\ref{eqn:seq-main}) is derivable in $\EIAL$.
		\end{enumerate}
\end{lemm}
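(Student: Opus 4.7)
The plan is to prove the two directions separately, using the dyadic calculus $\dEIALm$ (via Proposition~\ref{prop:dyadic-equivalence} and cut-elimination) as a tool for proof analysis, and transfinite induction on $\alpha < \omega^{\omega}$ for both parts. The intuition guiding everything is that the formulas $\FOREn(k_i)$ form an ``ordinal budget'' of size $\omega^{k_1} + \dotsc + \omega^{k_M}$ (written in Cantor normal form, since $k_1 \ge \dotsc \ge k_M$), and each unfolding of the infinitary sentence indexed by $x$ consumes part of this budget. The formulas $\FORCmp(\qbgn_i, h_i)$ with the hypotheses $\Upsilon_{\Hc}$ simulate the Turing machines $\TM_0, \TM_1$ via the string rewriting systems $\SR_i$: after the computation of $\fun_0$ or $\fun_1$ on the argument encoded by $\VARsym_1^x \VARsym_2^{\ldots}$, the residual formula $h_i(\cdot)$ on the output symbol dictates what happens next.

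For the soundness direction (part 2), I would proceed by transfinite induction on $\alpha$. In the base case $\alpha = 0$ with $x \in \mathbf{T}$, the string rewriting system $\SR_0$ reduces $\VARlt \VARsym_1^x \qbgn_0 \VARrt$ to $\VARlt \VARsym_1 \VARfn$ by Proposition~\ref{prop:Church-Turing} (since $\fun_0(\VARsym_1^x) = \VARsym_1$ when $x \in \mathbf{T}$), and each single rewriting step is simulated by one application of $({!}L)$ combined with $({\BS}L)$ and $({\cdot}L)$ on the corresponding member of $\Upsilon_{\Hc}$; then $h_0(\VARsym_1) = \VARok$ and $\FORBrk$ collapse the rest using $\VARsym_1^\ast \BS \VARok$ and $\FOROk$. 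In the inductive step, if $\varepsilon = 0$, truth of the sentence gives us some $k = \langle 1, \sharp\beta, e'\rangle \in \mathbf{T}$ with $\beta < \alpha$ and $\UCF(e,k)$ converging in $\le t$ steps; I would unfold the largest $\FOREn(k_1)$ using its definition $\FOROk \aconj \VARen \BS (\VARen \mconj \FOREn(k_1 - 1)^\ast)$ a suitable number of times, then use $\FOREn(0)$ and $\VARen \BS (\VARSigma \BS \FOREn_\exists)$ to trigger $\FOREn_\exists$, which runs $\TM_1$ on $\VARsym_1^x \VARsym_2^{\mathtt{c}(k,t)}$ via $\FORCmp(\qbgn_1, h_\exists)$, producing $\VARsym_1^k \VARsym_2$; then $h_\exists(\VARsym_2) = \VARPi \mconj \VARen$ reconstitutes a sequent of the same form with $x$ replaced by $k$, and with smaller budget, to which the induction hypothesis applies. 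The case $\varepsilon = 1$ is symmetric but uses $\FOREn_\forall$, which via $\VARsym_2^\ast \BS \VARgo$ needs to handle \emph{all} witnesses $k$; this is where the universal branching $\aconj$ in the $\FORCmp$ target and the Kleene star over $\VARsym_2$ play their role.

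For the completeness direction (part 1), I would work with a cut-free derivation in $\dEIALm$ using Proposition~\ref{prop:dyadic-equivalence} and Proposition~\ref{dyadic-cut-elim}, and argue by transfinite induction on $\alpha$ combined with the ordinal budget. The key observation is that in the antecedent, the only formulas whose decomposition can consume a $\VARen$ from $\VARen$ in the antecedent are $\FOREn(k_i)$ and $\FORBrk$; any other principal rule on a $\FOREn(k_i)$ selects the $\FOROk$ summand and discards the corresponding energy via $\FOROk = \VARok \BS \VARok$, which is immediately handled and cannot lead to the desired conclusion unless the computation has already reduced to $\VARok$. By analysing the lowermost non-trivial principal rule on $\VARen$, one extracts a tree whose branching structure mirrors Definition~\ref{Df:infinitary_propositional_sentences}: if the selected formula is $\FOREn(k_i)$ with $k_i > 0$, it reduces the budget by $\omega^{k_i}$ and is replaced by some $\FOREn(k_i - 1)^*$; if $\FOREn(0)$ is chosen, the $\FORCmp(\qbgn_1, \cdot)$-branch forces (via Proposition~\ref{prop:Church-Turing} applied to the hypotheses) that $\fun_1$ actually converges on the current $\VARsym_1^{x_1} \VARsym_2^{x_2}$ string, thereby ensuring the existence of a witness / verifier $k$ with $\beta < \alpha$, to which the induction hypothesis applies on a strictly smaller ordinal. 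Finally, if $\FORBrk$ is used, the only way to finish is when $x$ already represents a true base-level sentence; the $\Sigma$/$\Pi$ case split in $\FORBrk$ matches the polarity encoded by $\VARSigma$/$\VARPi$ tokens accumulated during the derivation.

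The main obstacle will be the completeness direction, specifically bookkeeping the correspondence between the ordinal $\omega^{k_1} + \dotsc + \omega^{k_M}$ and the depth of nested $\Sigma/\Pi$-alternations in the sentence indexed by $x$, together with ruling out ``cheating'' derivations that bypass the intended analysis. The advantage of using $\dEIALm$ is that the $(A)_{\mathrm{d}}$ rule forces each hypothesis from $\Upsilon_{\Hc}$ to be applied in one focused step that decomposes both the residual and the product simultaneously, so that the sequence of $(A)_{\mathrm{d}}$ applications on the $\TM_i$-simulating hypotheses yields a string rewriting derivation in $\SR_i$ in the sense of Proposition~\ref{prop:Church-Turing}; combined with invertibility of $({!}L)_{\mathrm{d}}$ (Lemma~\ref{bd-inv}) and the restricted form of ${!}$-formulae in $\Upsilon_{\Hc}$, this eliminates most of the spurious derivation shapes and lets the induction close cleanly.
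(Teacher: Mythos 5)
Your overall architecture (cut-free analysis in the dyadic calculus, hypotheses from $\Upsilon_{\Hc}$ applied in focused $(A)_{\mathrm{d}}$ steps so that they trace out rewriting derivations in the sense of Proposition~\ref{prop:Church-Turing}, and a recursion mirroring Definition~\ref{Df:infinitary_propositional_sentences}) is the right one, but the induction you propose does not close. The paper's induction is on the budget ordinal $\gamma = \omega^{k_1}+\dotsb+\omega^{k_M}$, not on $\alpha$: at an energy-unfolding step the index $x$ (hence $\alpha$) is unchanged and only $\gamma$ drops, so ``transfinite induction on $\alpha$'' (your part~2) has no inductive hypothesis to invoke there. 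Moreover the mechanics are not as you describe: because $\FOREn(k)=\FOROk \aconj \VARen \BS(\VARen \mconj \FOREn(k-1)^{\ast})$ needs the variable $\VARen$ immediately to its left, the only formula that can ever be unfolded is $\FOREn(k_M)$ (adjacent to $\VARen$), not ``the largest $\FOREn(k_1)$''; and since the resulting star sits on the \emph{left}, $({}^{\ast}L_\omega)$ demands \emph{all} premises with $\FOREn(k_M-1)^{l}$, $l\in\omega$ --- there is no ``suitable number of times''. Even in the soundness direction this forces a transfinite descent through the budgets $\gamma_l=\omega^{k_1}+\dotsb+\omega^{k_{M-1}}+\omega^{k_M-1}\cdot l$, which is exactly what an induction on $\gamma$ (or a lexicographic refinement) handles and an induction on $\alpha$ alone cannot.

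The more serious gap is in part~1: you misread $\FORBrk$. It is not a truth check matching ``polarity''; it is an unconditional escape hatch. When the budget is exhausted ($M=0$), the sequent ${!}\Upsilon_{\Hc},\VARlt,\VARsym_{1}^{x},\VARsym_{\mathsf{Q}},\VARen,\FORBrk \yields \VARlt\mconj\VARok$ is derivable for \emph{every} $x$ (consume $\VARen$, pick the $\VARSigma$- or $\VARPi$-branch, swallow $\VARsym_{1}^{x}$ with $\VARsym_{1}^{\ast}\BS\VARok$), so derivability via $\FORBrk$ certifies nothing about $x\in\mathbf{T}$. This is precisely why statement~1 carries the hypothesis $\omega^{k_1}+\dotsb+\omega^{k_M}>\alpha$, which your sketch never uses: in the paper the base case $\gamma=0$ is vacuous for statement~1, and at each unfolding one uses $\gamma=\sup_l\gamma_l>\alpha$ to find some $l$ with $\gamma_l>\alpha$ before applying the inductive hypothesis; the actual truth test at level $\alpha=0$ is performed by the base-check machine through $\FORCmp(\qbgn_0,h_0)$ inside $\FOREn_{\exists}/\FOREn_{\forall}$, not by $\FORBrk$. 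As you set it up, the completeness argument would have to infer $x\in\mathbf{T}$ from a $\FORBrk$-derivation, which is false. Finally, a word on rigour: the paper does not analyse derivations directly in $\dEIALm$ but first passes to the basic calculus $\IALbsc$ (Lemma~\ref{lemm:lower-bound-basic-derivation}, Corollary~\ref{coro:lower-bound-basic-derivation}), whose adjacency-constrained rules are what really rule out ``cheating'' shapes such as $({\BS}L)_{\mathrm{d}}$ with arbitrary antecedent splits or non-adjacent $\aconj$-decompositions; the focused $(A)_{\mathrm{d}}$ rule plus invertibility of $({!}L)_{\mathrm{d}}$ alone do not give you the forced-derivation analysis your plan relies on.
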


Let us provide some intuition behind the construction of the sequent (\ref{eqn:seq-main}). Assume that one is given an index $x = \langle \varepsilon, \sharp \alpha, e \rangle$ of an infinitary propositional sentence and they want to check if it is true (i.e. whether $x \in \mathbf{T}$). The input $x$ is encoded in unary as $\VARsym_1^x$. Informally, to check whether $x \in \mathbf{T}$, we need to use the recursive definition of an infinitary propositional sentence (Definition \ref{Df:infinitary_propositional_sentences}) ``at most $\alpha$ times.'' The formulae $\FOREn_\exists$ and $\FOREn_\forall$ are used to unfold the recursive definition one time. For example, if $x$ is a $\Sigma_\alpha$-index, then $\FOREn_\exists$ ``chooses'' one of the disjuncts of the infinitary propositional sentence corresponding to $x$ and forms a sequent of the same form as (\ref{eqn:seq-main}) but where $x$ is replaced by a $\Pi_\beta$-index of the chosen disjunct (and $\VARsym_\exists$ is replaced by $\VARsym_\forall$). Similarly, the formula $\FOREn_\forall$ ``chooses'' each conjunct of the infinitary propositional sentence corresponding to $x$ and forms a new sequent with the index of the conjunct.

As one needs to unfold the inductive definition many times, the formulae $\FOREn_\forall$ and $\FOREn_\exists$ are put inside the formulae $\FOREn(k)$. ($\FOREn$ stands for \textit{energy} because $\FOREn(k)$ can be considered as a source of energy needed to solve the problem of whether $x \in \mathbf{T}$.) The formula $\FOREn(k+1)$ can produce arbitrarily many copies of $\FOREn(k)$, and hence $\FOREn(k)$ can be used to unfold the definition of an infinitary propositional sentence ``$\omega^k$ times.'' Therefore, Lemma \ref{lemm:lower-bound-main} says, if the sequent (\ref{eqn:seq-main}) contains ``sufficiently many sufficiently large'' formulae $\FOREn(k)$ (namely, such that $\omega^{k_1}+\ldots+\omega^{k_M}>\alpha$), then its derivability exactly corresponds to the fact that $x \in \mathbf{T}$.

Let us proceed with formal details. To prove the lemma we need to analyse derivability of the sequent (\ref{eqn:seq-main}) in a convenient and concise way. In Section \ref{S:dyadic}, a dyadic calculus for $\EIALm$ was presented, which simplifies analysis of the exponential modality. For our purposes, we need to further modify the latter calculus in order to normalise applications of the rules $({\BS} L)$, $({\aconj} L)$ and $({\mconj} R)$. For this sake, we now introduce, given a fixed string rewriting system $\SR$, the logic $\IALbsc$ ($\bsc$ stands for {\em basic}). Let $\SFm$ be the set of subformulae of formulae from $\{\FOREn(k) \mid k \in \omega\} \cup \{\FORBrk, \VARlt \cdot \VARok \} \cup \Var$. Observe that formulae from $\SFm$ are built using only $\mconj, \BS, \aconj, {}^*$. 

The calculus $\IALbsc$ deals with sequents $\Pi \yields C$ where $\Pi$ consists of formulae from $\SFm$ and $C \in \SFm$ equals either $r$, $r_1 \mconj r_2$ or $r^*$, where $r$, $r_i$ are variables (we call such $C$ a {\em basic right-hand side}). The axioms and rules of $\IALbsc$ are presented below (note that $r,r_i$ are variables).
\[
\infer[\ID_{\bsc}]
{r \yields r}
{}
\]
\[
\infer[({\BS} L)_{\bsc}]{\Gamma, r, r \BS B, \Delta \yields C}
{\Gamma, B, \Delta \yields C}
\qquad
\infer[({\BS} L_n)_{\bsc}\mbox{, $n \in \omega$}]{\Gamma, r^n, r^* \BS B, \Delta \yields C}
{\Gamma, B, \Delta \yields C}
\]
\[
\infer[(\mconj L)]{\Gamma, A \mconj B, \Delta \yields C}
{\Gamma, A, B, \Delta \yields C}
\qquad
\infer[(\mconj R)_{\bsc}]{r_1 , r_2 \yields r_1 \mconj r_2}
{}
\]
\[
\infer[({\aconj} L_i)_{\bsc}\mbox{, $i = 1,2$}]
{\Gamma, r_i, \left( r_1 \BS A_1 \right) \aconj \left( r_2 \BS A_2 \right), \Delta \yields C}
{\Gamma, A_i, \Delta \yields C}
\]
\[
\infer[({}^* L_\omega)]
{\Gamma, A^*, \Delta \yields C}
{\bigl( \Gamma, A^n, \Delta \yields C \bigr)_{n \in \omega}}
\qquad
\infer[({}^* R_n)_{\bsc}\mbox{, $n \in \omega$}]
{r^n \yields r^*}{}
\]
\[
\infer[(A)_{\bsc}\mbox{, provided $(b_1 \ldots b_m \to c_1 \ldots  c_n) \in \SR$}]
{\Gamma,b_1,\ldots, b_m , \Delta \yields C}
{\Gamma, c_1, \ldots, c_n, \Delta \yields C}
\]
The axioms of $\IALbsc$ are $\ID_{\bsc}$, $(\mconj R)_{\bsc}$ and $({}^* R_n)_{\bsc}$; others are rules. Note that the rules of $\IALbsc$ do not change right-hand sides of sequents and also that, for each basic right-hand side $C$, there is exactly one axiom of the form $\Pi \yields C$. If $\Gamma, \Gamma^\prime$ are sequences of formulae, then let us write $\Gamma \vdash_{\bsc} \Gamma^\prime$ if, for a fresh variable $t$ not occurring in $\Gamma,\Gamma^\prime$, there is a derivation of $\Gamma^\prime \yields t$ from $\Gamma \yields t$ in $\IALbsc$. Clearly, a sequent $\Pi \yields C$ is derivable in $\IALbsc$ if and only if there is an axiom $\Pi_0 \yields C$ of $\IALbsc$ such that $\Pi_0 \vdash_{\bsc} \Pi$. The relation $\vdash_{\bsc}$ is reflexive, transitive and monotone. Monotonicity means that if $\Pi \vdash_{\bsc} \Pi^\prime$, then $\Gamma, \Pi, \Delta \vdash_{\bsc} \Gamma, \Pi^\prime, \Delta$.

\begin{lemm}\label{lemm:lower-bound-basic-derivation}
    A sequent $\Pi \yields C$ of $\IALbsc$ is derivable in $\IALbsc$  if and only if ${!}\Upsilon_{\Hc}, \Pi \yields C$ is derivable in $\EIAL$.
\end{lemm}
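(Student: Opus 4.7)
The plan is to prove the two directions separately. The direction from $\IALbsc$-derivability to $\EIAL$-derivability of the exponentiated sequent is a straightforward inductive simulation, while the converse is substantially more involved and relies on cut-elimination together with a careful structural analysis of cut-free derivations in the dyadic calculus.

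For the forward direction, I induct on the $\IALbsc$-derivation of $\Pi \yields C$ and exhibit a corresponding $\EIAL$-derivation of ${!}\Upsilon_{\Hc}, \Pi \yields C$. The axioms $\ID_{\bsc}$, $(\mconj R)_{\bsc}$, $({}^{*}R_n)_{\bsc}$ correspond to $\EIAL$-axioms padded with $({!}W)$. The rules $(\mconj L)$ and $({}^{*}L_{\omega})$ apply verbatim; $(\BS L)_{\bsc}$ and $(\BS L_n)_{\bsc}$ are simulated by $({\BS}L)$ with the trivial left premise supplied by $\ID$ or $({}^{*}R_n)$; and $(\aconj L_i)_{\bsc}$ is simulated by $(\aconj L_i)$ followed by $({\BS}L)$ with a trivial left premise. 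The crucial rule $(A)_{\bsc}$, corresponding to an $\SR$-rule $b_1\ldots b_m \to c_1\ldots c_n$, is simulated using the formula ${!}((b_1 \cdot \ldots \cdot b_m) \BS (c_1 \cdot \ldots \cdot c_n)) \in {!}\Upsilon_{\Hc}$: apply $({!}C)$ to duplicate it, use $({!}P_1)$ and $({!}P_2)$ to position it adjacent to $b_1, \ldots, b_m$, then $({!}L)$ and $({\BS}L)$ with a trivial $(\mconj R)$-derivation of $b_1, \ldots, b_m \yields b_1 \cdot \ldots \cdot b_m$, followed by $(\mconj L)$ to expose $c_1, \ldots, c_n$.

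For the converse, I first apply cut-elimination in $\EIAL$ (Theorem~\ref{Th:EACTomega-cutelim}), convert via Proposition~\ref{prop:dyadic-equivalence} to a cut-free derivation in $\dEIALm$, then iteratively apply invertibility of $({!}L)_{\mathrm{d}}$ (Lemma~\ref{bd-inv}) together with cut-elimination for $\dEIALm$ (Proposition~\ref{dyadic-cut-elim}) to obtain a cut-free derivation of $\Upsilon_{\Hc}; \Pi \yields C$ with $\Upsilon_{\Hc}$ in the $!$-zone. A key structural observation is that, since the root right-hand side is basic and the antecedent lies in $\SFm^{*}$, every sequent in the derivation has a basic right-hand side (any right rule producing $\BS$ or $\aconj$ on the right would generate a non-basic right-hand side incompatible with such a root), all antecedents remain in $\SFm^{*}$, and the $!$-zone is constant at $\Upsilon_{\Hc}$ throughout. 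By transfinite induction on derivation height, each sequent is seen to be $\IALbsc$-derivable: axioms $\ID_{\mathrm{d}}$ with basic $A$ are handled by pre-composing the corresponding $\IALbsc$-axiom with $(\mconj L)$ or $({}^{*}L_{\omega})$; $(\BS L)_{\mathrm{d}}$ becomes $(\BS L)_{\bsc}$ (or $(\BS L_n)_{\bsc}$) followed by a $\vdash_{\bsc}$-transformation obtained from the inductively produced $\IALbsc$-derivation of the left premise (via monotonicity of $\vdash_{\bsc}$); and $(A)_{\mathrm{d}}$ maps directly to $(A)_{\bsc}$.

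The main obstacle is the case of $(\aconj L_i)_{\mathrm{d}}$, which takes $\Gamma, r_i \BS B_i, \Delta \yields C$ to $\Gamma, (r_1 \BS B_1) \aconj (r_2 \BS B_2), \Delta \yields C$, while the basic counterpart $(\aconj L_i)_{\bsc}$ acts on the further-reduced premise $\Gamma, B_i, \Delta \yields C$ and introduces $r_i$ adjacent to the $\aconj$-formula. To bridge this, I plan to examine the inductively obtained $\IALbsc$-derivation of $\Gamma, r_i \BS B_i, \Delta \yields C$ and locate the $(\BS L)_{\bsc}$-step that introduces the specific $r_i \BS B_i$ together with an adjacent $r_i$; replacing this step by $(\aconj L_i)_{\bsc}$ and re-threading the subsequent $\vdash_{\bsc}$-transformations yields the desired $\IALbsc$-derivation of the conclusion. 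The technical heart is to verify that such a step is always locatable and that the replacement is sound, which can be phrased as a focusing argument permuting non-interacting rules in the dyadic derivation to bring $(\aconj L_i)_{\mathrm{d}}$ adjacent to the $(\BS L)_{\mathrm{d}}$ reducing $r_i \BS B_i$.
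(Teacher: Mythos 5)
Your proposal is correct and follows essentially the same route as the paper's proof: the easy simulation for the forward direction, and for the converse, cut elimination plus Proposition~\ref{prop:dyadic-equivalence} and Lemma~\ref{bd-inv} to pass to a cut-free $\dEIALm$-derivation with $\Upsilon_{\Hc}$ in the ${!}$-zone, followed by transfinite induction exploiting the basic right-hand-side/$\SFm$ structure, the $\vdash_{\bsc}$-composition for $({\BS}L)_{\mathrm{d}}$, and the replacement of the introducing $({\BS}L)_{\bsc}$ step by $({\aconj}L_i)_{\bsc}$ in the $({\aconj}L_i)_{\mathrm{d}}$ case, which is exactly the paper's argument (you trace a single occurrence where the paper replaces all occurrences of the formula, a harmless variation). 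Two small points to tidy up: the cases $({\mconj}R)_{\mathrm{d}}$ and $({}^{*}R_n)_{\mathrm{d}}$, which you do not mention, are handled by the same $\vdash_{\bsc}$ device you use for $({\BS}L)_{\mathrm{d}}$, and the ${!}$-zone need not be literally constant (an $(A)_{\mathrm{d}}$ step may drop its principal ${!}$-formula going upward) --- it suffices that it always remains a subset of ${!}\Upsilon_{\Hc}$, so every $(A)_{\mathrm{d}}$ step still corresponds to a rule of $\SR$.
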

\begin{proof}
    The ``only if'' direction is trivial because each rule of $\IALbsc$ is a particular case of some rule of $\EIAL$ or is a composition of those. For example, the rule $({\aconj} L_i)_{\bsc}$ is a composition of the rules $({\aconj} L_i)$ and $(\BS L)$. The rule $(A)_{\bsc}$ maps exactly onto $(A)_{\mathrm{d}}$.

    To prove the ``if'' direction, first of all, let us switch from the calculus $\EIAL$ to the dyadic calculus $\dEIALm$. By Proposition \ref{prop:dyadic-equivalence} and Lemma \ref{bd-inv}, if ${!}\Upsilon_{\Hc}, \Pi \yields C$ is derivable in $\EIAL$, then ${!}\Upsilon_{\Hc}; \Pi \yields C$ is derivable in $\dEIALm$. Take some derivation of ${!}\Upsilon_{\Hc}; \Pi \yields C$ in $\dEIALm$; we proceed with proving that then $\Pi \yields C$ is derivable in $\IALbsc$ by transfinite induction on the size of the given derivation (which is defined standardly by transfinite recursion). As for the base case, if ${!}\Upsilon_{\Hc}; \Pi \yields C$ is an axiom of $\dEIALm$, then $\Pi = C$. It is straightforward to verify that the sequents $r \yields r$, $r_1 \mconj r_2 \yields r_1 \mconj r_2$, $r^* \yields r^*$ are derivable in $\IALbsc$.

    To prove the induction step, consider the last rule application in the derivation of ${!}\Upsilon_{\Hc}; \Pi \yields C$ in $\EIALm$. Clearly, if it is that of $({\mconj} L)_{\mathrm{d}}$, $({}^* L_\omega)_{\mathrm{d}}$, or $(A)_{\mathrm{d}}$, then one simply applies the induction hypothesis to its premises and then applies the corresponding rule of $\IALbsc$ thus obtaining a correct derivation in $\IALbsc$. In particular, the rules $(A)_{\mathrm{d}}$ and $(A)_{\bsc}$ correspond to each other as they affect a sequent in the same way. Thus it is interesting to consider only the rules $({\BS} L)_{\mathrm{d}}$, $({\mconj} R)_{\mathrm{d}}$, $({\aconj} L_i)_{\mathrm{d}}$, and $({}^* R_n)_{\mathrm{d}}$ (any other rule of $\dEIALm$ cannot be the last one in the derivation of ${!}\Upsilon_{\Hc}; \Pi \yields C$ because neither $\Pi$ nor $C$ contains ${!}$, $\SL$, $\adisj$, $\Z$, $\U$).

    \textit{Case $({\BS} L)_{\mathrm{d}}$:}
    \[
        \infer[({\BS} L)_{\mathrm{d}}]
        {
            {!}\Upsilon_{\Hc}; \Gamma, \Pi, A \BS B, \Delta \yields C
        }
        {
            {!}\Upsilon_{\Hc}; \Pi \yields A
            & 
            {!}\Upsilon_{\Hc}; \Gamma, B, \Delta \yields C
        }
    \]
    If a formula of the form $A \BS B$ belongs to $\SFm$, then either $A=p$ or $A = p^*$ where $p$ is a variable (see Definition \ref{Df:construction-lower-bound}). In the first case, let us apply the induction hypothesis to the premises and conclude that $\Pi \yields p$ and $\Gamma, B, \Delta \yields C$ are derivable in $\IALbsc$, i.e. $p \vdash_{\bsc} \Pi$ and $\Pi_0 \vdash_{\bsc} \Gamma, B, \Delta$ where $\Pi_0 \yields C$ is an axiom of $\IALbsc$. Therefore, $\Pi_0 \vdash_{\bsc} \Gamma, B, \Delta \vdash_{\bsc} \Gamma, p, p \BS B, \Delta \vdash_{\bsc} \Gamma, \Pi, p \BS B, \Delta$; thus $\Gamma, \Pi, p \BS B, \Delta \yields C$ is derivable in $\IALbsc$ as desired. In the case $A = p^*$, the reasonings are similar, the only difference is that $p^n \vdash_{\bsc} \Pi$ for some $n$.

    \textit{Case $({\mconj} R)_{\mathrm{d}}$:}
    \[
        \infer[(\mconj R)_{\mathrm{d}}]
        {{!}\Upsilon_{\Hc}; \Gamma, \Delta \yields r_1 \mconj r_2}
        {{!}\Upsilon_{\Hc}; \Gamma \yields r_1 & {!}\Upsilon_{\Hc}; \Delta \yields r_2}
    \]
    Similarly to the previous case, one applies the induction hypothesis to the premises and obtains that $r_1 \vdash_{\bsc} \Gamma$, $r_2 \vdash_{\bsc} \Delta$. Thus, $r_1, r_2 \vdash_{\bsc} \Gamma, r_2 \vdash_{\bsc} \Gamma, \Delta$ and hence $\Gamma, \Delta \yields r_1 \mconj r_2$ is derivable from the axiom $r_1,r_2 \yields r_1 \mconj r_2$.

    \textit{Case $({}^* R_n)_{\mathrm{d}}$:}
    \[
        \infer[({}^* R_n)_{\mathrm{d}}]
            {{!}\Upsilon_{\Hc}; \Pi_1, \ldots, \Pi_n \yields r^*}
            {{!}\Upsilon_{\Hc}; \Pi_1 \yields r & \ldots & {!}\Upsilon_{\Hc}; \Pi_n \yields r}
    \]
    The argument is similar; namely, using the induction hypothesis one concludes that $r \vdash_{\bsc} \Pi_i$ for $i=1,\ldots, n$ and hence $r^n \vdash_{\bsc} \Pi_1, r^{n-1} \vdash_{\bsc} \ldots \vdash_{\bsc} \Pi_1, \ldots, \Pi_n$. Since $r^n \yields r^\ast$ is an axiom of $\IALbsc$, the sequent $\Pi_1, \ldots, \Pi_n \yields r^\ast$ is derivable in $\IALbsc$.

    \textit{Case $({\aconj} L_i)_{\mathrm{d}}$:}
    \[
        \infer[({\aconj} L_i)_{\mathrm{d}}]
        {{!}\Upsilon_{\Hc}; \Gamma, \left( r_1 \BS A_1 \right) \aconj \left( r_2 \BS A_2 \right), \Delta \yields C}
        {{!}\Upsilon_{\Hc}; \Gamma, r_i \BS A_i, \Delta \yields C}
    \]
    (Note that any formula $B_1 \aconj B_2 \in \SFm$ is of the form $\left( r_1 \BS A_1 \right) \aconj \left( r_2 \BS A_2 \right)$ for some variables $r_1,r_2$.) By the induction hypothesis, there is a derivation of $\Gamma, r_i \BS A_i, \Delta \yields C$ in $\IALbsc$. Then, one reasons as follows. Given some fixed derivation of $\Gamma, r_i \BS A_i, \Delta \yields C$ in $\IALbsc$, find all the occurrences of the formula $r_i \BS A_i$ and replace them by $\left( r_1 \BS A_1 \right) \aconj \left( r_2 \BS A_2 \right)$. Then, each application of $({\BS} L)_{\bsc}$ that introduces $r_i \BS A_i$ turns into an application of $({\aconj} L_i)_{\bsc}$ that introduces $\left( r_1 \BS A_1 \right) \aconj \left( r_2 \BS A_2 \right)$. These arguments can be formalised easily by means of induction.
\end{proof}

\begin{coro}\label{coro:lower-bound-basic-derivation}
    Consider a sequent $\Gamma, B, \Delta \yields C$ of $\IALbsc$ where $\Gamma$ is a sequence of variables and $\Delta$ is a sequence of formulae of the form $\FOROk \aconj \left( q \BS C \right)$ where $q \in \Var$. 
    \begin{enumerate}[i.]
        \item If $B = p \BS A$ for $p \in \Var$, then the sequent is derivable if and only if there is $\Gamma^\prime$ such that $\Gamma \Rightarrow_{\SR}^\ast \Gamma^\prime\, p$ and $\Gamma^\prime, A, \Delta \yields C$ is derivable in $\IALbsc$.
        \item If $B = \FOROk \aconj (p \BS A)$ and $\Gamma$ does not contain $\VARok$, then the sequent is derivable if and only if there is $\Gamma^\prime$ such that $\Gamma \Rightarrow_{\SR}^\ast \Gamma^\prime\, p$ and $\Gamma^\prime, A, \Delta \yields C$ is derivable in $\IALbsc$.
    \end{enumerate}
    
\end{coro}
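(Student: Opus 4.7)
The claim boils down to analysing which rule of $\IALbsc$ can be the last one in a derivation of a sequent of the form $\Gamma, B, \Delta \yields C$ under the structural restrictions on $\Gamma$ and $\Delta$. My plan is to carry out a tight case analysis that isolates exactly two admissible options, either a forward $\SR$-rewrite inside $\Gamma$ via $(A)_{\bsc}$ or a single rule that decomposes $B$, after which both implications follow by induction on derivation size.

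First I would observe that no rule can attack anything in $\Gamma$, since $\Gamma$ is a string of variables, and no rule can attack a formula in $\Delta$. The latter is the key technical point: each $\Delta$-formula is of the shape $\FOROk \aconj (q \BS C)$, so the only candidate rule is $({\aconj}L_i)_{\bsc}$, which requires $\VARok$ or $q$ in the position immediately to the left; but that position is occupied either by $B$ itself (for the first $\Delta$-formula) or by another $\Delta$-formula, neither of which is a variable. The right-side axioms of $\IALbsc$ are inapplicable because the antecedent $\Gamma, B, \Delta$ does not match any of the shapes $r$, $r_1, r_2$, or $r^n$. Thus the last rule must be either $(A)_{\bsc}$ acting on a substring of $\Gamma$, or a rule whose principal formula is $B$.

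For Part~1, the only rule whose principal formula is $B = p \BS A$ is $({\BS}L)_{\bsc}$, which forces $\Gamma$ to end in $p$ and reduces the goal to $\Gamma', A, \Delta \yields C$ with $\Gamma = \Gamma', p$. For Part~2, $B = \FOROk \aconj (p \BS A)$ could in principle be attacked either by $({\aconj}L_1)_{\bsc}$ (requiring $\VARok$ immediately to the left of $B$) or by $({\aconj}L_2)_{\bsc}$ (requiring $p$ immediately to the left). The hypothesis that $\Gamma$ does not contain $\VARok$, together with the observation, drawn from Definitions~\ref{def:SR-from-TM} and~\ref{Df:construction-hypotheses}, that no $\SR$-rule ever produces $\VARok$, excludes the first option at every stage of the derivation, because $\VARok$-freeness is preserved under forward rewriting and hence under upward $(A)_{\bsc}$ applications. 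Thus only $({\aconj}L_2)_{\bsc}$ can decompose $B$, again reducing the goal to $\Gamma', A, \Delta \yields C$ with $\Gamma = \Gamma', p$.

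With this dichotomy in hand, the $(\Leftarrow)$ direction is immediate: start from a derivation of $\Gamma', A, \Delta \yields C$, apply $({\BS}L)_{\bsc}$ (respectively $({\aconj}L_2)_{\bsc}$) to get $\Gamma', p, B, \Delta \yields C$, and then mirror the rewriting sequence $\Gamma \Rightarrow_{\SR}^\ast \Gamma' p$ by a chain of $(A)_{\bsc}$ applications, read downward, ending at $\Gamma, B, \Delta \yields C$. For $(\Rightarrow)$, I would induct on derivation size: if the last rule is $(A)_{\bsc}$ with $\Gamma \Rightarrow_{\SR} \Gamma_1$, the induction hypothesis applies to $\Gamma_1, B, \Delta \yields C$ (noting that $\Gamma_1$ is still $\VARok$-free in Part~2) and is combined with this single rewriting step; if the last rule decomposes $B$, take $\Gamma'$ to be $\Gamma$ with the trailing $p$ removed and use zero rewriting steps. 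The main obstacle is the thorough case analysis ensuring no other rule can serve as the last one; once this is secured, the induction itself is completely mechanical.
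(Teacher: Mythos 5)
Your proposal is correct and follows essentially the same route as the paper: the ``if'' direction by stacking the decomposition rule under a chain of $(A)_{\bsc}$ applications mirroring $\Gamma \Rightarrow_{\SR}^{\ast} \Gamma'\,p$, and the ``only if'' direction by induction on derivation size with a case split on whether the last rule is $(A)_{\bsc}$ or the rule decomposing $B$, using $\VARok$-freeness (preserved since no $\SR$-rule produces $\VARok$) to exclude $({\aconj}L_1)_{\bsc}$ in Part~2. Your case analysis ruling out rules acting on $\Delta$ and the axioms is merely a more explicit rendering of what the paper states without comment.
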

\begin{proof}
    Let us prove the first statement. The ``if'' direction is straightforward because $\Gamma, p \BS A, \Delta \yields C$ can be obtained from $\Gamma^\prime, A, \Delta \yields C$ by an application of $(\BS L)_{\bsc}$ followed by a series of applications of $(A)_{\bsc}$ that correspond to the derivation $\Gamma \Rightarrow_{\SR}^\ast \Gamma^\prime\, p$. 
    
    The ``only if'' direction is proved by induction on the size of a derivation of $\Gamma, p \BS A, \Delta \yields C$ in $\IALbsc$. The last rule applied in it is either $({\BS} L)_{\bsc}$ or $(A)_{\bsc}$. In the first case, $\Gamma = \Gamma^\prime, p$, and the rule application looks as follows:
    \[
        \infer[({\BS} L)_{\bsc}]{\Gamma^\prime, p, p \BS A, \Delta \yields C}{\Gamma^\prime, A, \Delta \yields C}
    \]
    The statement of the corollary follows. In the second case, $\Gamma = \Gamma_1, b_1, \ldots, b_m, \Gamma_2$ and the rule looks as follows:
    \[
        \infer[(A)_{\bsc}\mbox{, $(b_1 \ldots b_m \to c_1 \ldots  c_n) \in \SR$}]
        {\Gamma_1,b_1,\ldots, b_m , \Gamma_2, p \BS A, \Delta \yields C}
        {\Gamma_1, c_1, \ldots, c_n, \Gamma_2, p \BS A, \Delta  \yields C}
    \]
    By the induction hypothesis, there is $\Gamma^\prime$ such that $\Gamma_1, c_1, \ldots, c_n, \Gamma_2 \Rightarrow_{\SR}^\ast \Gamma^\prime\, p$ and $\Gamma^\prime, A, \Delta \yields C$ is derivable in $\IALbsc$. It remains to observe that $\Gamma = \Gamma_1,b_1,\ldots, b_m , \Gamma_2 \Rightarrow_{\SR} \Gamma_1, c_1, \ldots, c_n, \Gamma_2 \Rightarrow_{\SR}^\ast \Gamma^\prime\, p$.

    The second statement is proved in the same way. Notice that no rewriting rule of $\SR$ contains $\VARok$, so $\VARok$ cannot appear to the left from $\FOROk \aconj (p \BS A)$.
\end{proof}

\begin{lemm}\label{lemm:ok}
    A sequent of the form $\VARlt, \VARok, \FOROk \aconj B_1, \ldots, \FOROk \aconj B_n \yields \VARlt \mconj \VARok$ is derivable in $\IALbsc$.
\end{lemm}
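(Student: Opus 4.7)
The approach is induction on $n$, leveraging the observation that the formula $\FOROk = \VARok \BS \VARok$ is designed precisely so that the rule $({\aconj} L_1)_{\bsc}$ with $r_1 = A_1 = \VARok$ strips off a whole $\FOROk \aconj B_i$ formula at no cost: the premise and conclusion of such an application coincide except for the deletion of one such meet occurring right after a $\VARok$.

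For the base case $n = 0$, the sequent reduces to $\VARlt, \VARok \yields \VARlt \mconj \VARok$, which is directly an instance of the axiom $({\mconj} R)_{\bsc}$ (with $r_1 = \VARlt$ and $r_2 = \VARok$).

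For the inductive step, I would apply $({\aconj} L_1)_{\bsc}$ at the leftmost meet formula, instantiating the rule schema with $\Gamma = \VARlt$, $r_1 = \VARok$, $A_1 = \VARok$, and $\Delta = \FOROk \aconj B_2, \ldots, \FOROk \aconj B_n$. The conclusion matches the target sequent $\VARlt, \VARok, \FOROk \aconj B_1, \ldots, \FOROk \aconj B_n \yields \VARlt \mconj \VARok$, and the premise is exactly
\[
\VARlt, \VARok, \FOROk \aconj B_2, \ldots, \FOROk \aconj B_n \yields \VARlt \mconj \VARok ,
\]
which is derivable by the induction hypothesis.

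The only subtlety — the one place where I would pause — is verifying that $({\aconj} L_1)_{\bsc}$ actually applies to $\FOROk \aconj B_1$: the rule demands a meet of the specific shape $(r_1 \BS A_1) \aconj (r_2 \BS A_2)$. The first conjunct is fine by the definition of $\FOROk$; the second requires $B_1$ to have the form $r_2 \BS A_2$. This is automatic in the context of the paper, since $B_1 \in \SFm$ and every meet subformula appearing in the generators of $\SFm$ — namely the formulas $\FOREn(k)$, $\FOREn_\exists$, $\FOREn_\forall$, and $\FORBrk$ of Definition~\ref{Df:construction-lower-bound} — is by inspection of precisely this shape. Hence the inductive step goes through and the lemma follows.
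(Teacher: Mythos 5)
Your proof is correct and matches the paper's argument exactly: start from the $(\mconj R)_{\bsc}$ axiom $\VARlt, \VARok \yields \VARlt \mconj \VARok$ and strip the meet formulas by $n$ applications of $({\aconj} L_1)_{\bsc}$, which you merely package as an induction on $n$. Your extra check that each $B_i$ has the shape $r_2 \BS A_2$ (so that the rule schema literally applies) is a legitimate point the paper leaves implicit, and your justification via the shape of meets in $\SFm$ is the right one.
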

\begin{proof}
    Recall that $\FOROk = \VARok \BS \VARok$. Then the sequent can be derived in $\IALbsc$ from the axiom $\VARlt , \VARok \yields \VARlt \mconj \VARok$ by $n$ applications of $({\aconj} L_1)_{\bsc}$.
\end{proof}

We are finally ready to prove the main lemma.
\begin{proof}[Proof of Lemma \ref{lemm:lower-bound-main}]
    The proof is by transfinite induction on $\gamma := \omega^{k_1}+\dotsc+\omega^{k_M}$ (if $M=0$, then $\gamma := 0$). According to Lemma \ref{lemm:lower-bound-basic-derivation}, the sequent (\ref{eqn:seq-main}) is derivable in $\EIAL$ if and only if the following sequent is derivable in $\IALbsc$:
    \begin{equation}\label{eqn:seq-main-bsc}
		\VARlt, \VARsym_{1}^{x}, \VARsym_{\mathsf{Q}}, \VARen, \FOREn(k_M), \ldots, \FOREn(k_1), \FORBrk \yields \VARlt \mconj \VARok
    \end{equation}

    \textit{The base case:} $\gamma = 0$. Then, (\ref{eqn:seq-main-bsc}) is of the form $\VARlt, \VARsym_{1}^{x}, \VARsym_{\mathsf{Q}}, \VARen, \FORBrk \yields \VARlt \mconj \VARok$. This sequent is derivable in $\IALbsc$; below, we present a derivation for $\mathsf{Q} = \exists$:
    \[
        \infer[({\aconj} L_2)_{\bsc}]
		{
            \VARlt, \VARsym_{1}^{x}, \VARSigma, \VARen, \FOROk \aconj \VARen \BS \left( \left( \VARSigma \BS \VARsym_{1}^\ast \BS \VARok \right) \aconj \left( \VARPi \BS \VARsym_{1}^\ast \BS \VARok \right) \right) \yields \VARlt \mconj \VARok
		}
		{
			\infer[({\aconj} L_1)_{\bsc}]
				{
				    \VARlt, \VARsym_{1}^{x}, \VARSigma,  \left( \VARSigma \BS \VARsym_{1}^\ast \BS \VARok \right) \aconj \left( \VARPi \BS \VARsym_{1}^\ast \BS \VARok \right) \yields \VARlt \mconj \VARok
				}
				{
					\infer[({\BS} L_x)_{\bsc}]
						{
						  \VARlt, \VARsym_{1}^{x}, \VARsym_{1}^\ast \BS \VARok \yields \VARlt \mconj \VARok
						}
						{
                            \VARlt, \VARok \yields \VARlt \mconj \VARok
						}
				}
		}
    \]
    This agrees with the statement of Lemma \ref{lemm:lower-bound-main}. Indeed, since it cannot be the case that $0 = \gamma > \alpha$ for any ordinal $\alpha$, the first statement of the lemma holds trivially. The second one holds as well because the sequent is derivable independently of~$x$.

    \textit{The inductive step:} assume that $\gamma > 0$ (or equivalently, that $M > 0$).
    
    \textit{Case 1:} $k_M > 0$. Recall that $\FOREn(k_M) := \FOROk \aconj \VARen \BS (\VARen \mconj \FOREn(k_M-1)^\ast)$. Then, any derivation of (\ref{eqn:seq-main-bsc}) in the calculus $\IALbsc$ must end as follows:
    \begin{equation}\label{eqn:der-base-case}
    \infer[(\aconj L_2)_{\bsc}]
    {
        \VARlt, \VARsym_{1}^{x}, \VARsym_{\mathsf{Q}}, \VARen, \FOREn(k_M), \ldots, \FOREn(k_1), \FORBrk \yields \VARlt \mconj \VARok
    }
    {
        \infer[(\mconj L)]
        {
            \VARlt, \VARsym_{1}^{x}, \VARsym_{\mathsf{Q}}, \VARen \mconj \FOREn(k_M-1)^\ast, \FOREn(k_{M-1}), \ldots, \FOREn(k_1), \FORBrk \yields \VARlt \mconj \VARok
        }
        {
            \infer[({}^* L_\omega)]
            {
                \VARlt, \VARsym_{1}^{x}, \VARsym_{\mathsf{Q}}, \VARen , \FOREn(k_M-1)^\ast, \FOREn(k_{M-1}), \ldots, \FOREn(k_1), \FORBrk \yields \VARlt \mconj \VARok
            }
            {
                \left(
                    \VARlt, \VARsym_{1}^{x}, \VARsym_{\mathsf{Q}}, \VARen , \FOREn(k_M-1)^{l}, \FOREn(k_{M-1}), \ldots, \FOREn(k_1), \FORBrk \yields \VARlt \mconj \VARok
                \right)_{l \in \omega}
            }
        }
    }
    \end{equation}
    The fact that there are no other ways to derive~(\ref{eqn:seq-main-bsc}) is verified straightforwardly by considering all possible ways to apply rules of $\IALbsc$. In particular, there is no way to apply the rule $(A)_{\bsc}$ at any of the above steps because this is possible only if there is a variable from $Q_0 \cup Q_1 \cup \{\VARex\}$ in the left-hand side of a sequent.
    
    Now, we can apply the induction hypothesis to the sequent
    \begin{equation}\label{eqn:IH-base-case}
        \VARlt, \VARsym_{1}^{x}, \VARsym_{\mathsf{Q}}, \VARen , \FOREn(k_M-1)^{l}, \FOREn(k_{M-1}), \ldots, \FOREn(k_1), \FORBrk \yields \VARlt \mconj \VARok
    \end{equation}
    for each $l$. Indeed, the parameter $\gamma_l := \omega^{k_1}+\dotsc+\omega^{k_{M-1}}+\omega^{k_M-1}\cdot l$ of this sequent is less than $\gamma$. By the induction hypothesis, if (\ref{eqn:IH-base-case}) is derivable and $\gamma_l > \alpha$, then $x \in \mathbf{T}$, and, conversely, if $x \in \mathbf{T}$, then (\ref{eqn:IH-base-case}) is derivable. 
    
    Given this, let us prove both statements of the lemma for the sequent (\ref{eqn:seq-main-bsc}). Assume that (\ref{eqn:seq-main-bsc}) is derivable and $\gamma > \alpha$. Then (\ref{eqn:IH-base-case}) is derivable for each $l \in \omega$ (since this sequent occurs in the only possible derivation (\ref{eqn:der-base-case})). Since $\gamma > \alpha$ and $\gamma = \sup_{l \in \omega} \gamma_l$, then $\gamma_l > \alpha$ for some $l \in \omega$. Therefore, by the induction hypothesis, $x \in \mathbf{T}$, as desired. 
    Conversely, let $x \in \mathbf{T}$. Then, (\ref{eqn:IH-base-case}) is derivable for each $l \in \omega$ and hence so is (\ref{eqn:seq-main-bsc}), as the derivation (\ref{eqn:der-base-case}) shows.
    
    \textit{Case 2:} $k_M = 0$. Recall that $\FOREn(0) := \FOROk \aconj \VARen \BS \left( \left( \VARSigma \BS \FOREn_{\exists} \right) \aconj \left( \VARPi \BS \FOREn_{\forall} \right) \right)$. Let us denote the sequence of formulae $\FOREn(k_{M-1}), \ldots, \FOREn(k_1), \FORBrk$ by $\Xi$. Any derivation of (\ref{eqn:seq-main-bsc}) must end as follows:
    \[
        \infer[(\aconj L_2)_{\bsc}]
        {
            \VARlt, \VARsym_{1}^{x}, \VARsym_{\mathsf{Q}}, \VARen, \FOREn(0), \Xi \yields \VARlt \mconj \VARok
        }
        {
            \infer[(\aconj L_i)_{\bsc}]
            {
                \VARlt, \VARsym_{1}^{x}, \VARsym_{\mathsf{Q}}, \left( \VARSigma \BS \FOREn_{\exists} \right) \aconj \left( \VARPi \BS \FOREn_{\forall} \right), \Xi \yields \VARlt \mconj \VARok
            }
            {
                \VARlt, \VARsym_{1}^{x}, \FOREn_{\mathsf{Q}}, \Xi \yields \VARlt \mconj \VARok
            }
        }
    \]
    Here $i=1$ if $\mathsf{Q}=\exists$ and $i=2$ if $\mathsf{Q} = \forall$.
    
    \textit{Case 2a:} $\mathsf{Q} = \exists$. Recall that $\FOREn_{\exists} = 
    \VARgo \mconj 
    \FORCmp\left(\qbgn_0,h_0\right) \mconj 
    \left(\FOROk \aconj \VARgo \BS \VARex \right) \mconj
    \left(\FOROk \aconj \FORCmp\left(\qbgn_1, h_{\exists}\right) \right)$. Then, by Corollary \ref{coro:invertibility}, $\VARlt, \VARsym_{1}^{x}, \FOREn_{\exists}, \Xi \yields \VARlt \mconj \VARok$ is derivable if and only if so is
    \[
        \VARlt, \VARsym_{1}^{x}, \VARgo, \FORCmp\left(\qbgn_0,h_0\right), \FOROk \aconj \VARgo \BS \VARex, \FOROk \aconj \FORCmp\left(\qbgn_1, h_{\exists}\right), \Xi \yields \VARlt \mconj \VARok.
    \]
    Recall that $\FORCmp\left(\qbgn_0,h_0\right) = \VARgo \BS \left( \qbgn_0 \mconj \VARrt \mconj \VARfn \BS \left( \left( \VARsym_{1} \BS h_0(\VARsym_{1}) \right) \aconj \left( \VARsym_{2} \BS h_0(\VARsym_{2}) \right) \right) \right)$. The only way to derive the latter sequent in $\IALbsc$ is by means of the following application of $({\BS} L)_{\bsc}$:
    \[
    \infer %[({\BS} L)_{\bsc}]
    {
        \VARlt, \VARsym_{1}^{x}, \VARgo, \FORCmp\left(\qbgn_0,h_0\right), \FOROk \aconj \VARgo \BS \VARex, \FOROk \aconj \FORCmp\left(\qbgn_1, h_{\exists}\right), \Xi \yields \VARlt \mconj \VARok
    }
    {
    \parbox{.9\textwidth}{
        $\VARlt, \VARsym_{1}^{x}, \qbgn_0 \mconj \VARrt \mconj \VARfn \BS \left( \VARsym_{1} \BS h_0(\VARsym_{1}) \aconj \VARsym_{2} \BS h_0(\VARsym_{2}) \right), \\
        \phantom{a} \hfill \FOROk \aconj \VARgo \BS \VARex, \FOROk \aconj \FORCmp\left(\qbgn_1, h_{\exists}\right), \Xi \yields \VARlt \mconj \VARok$
    }
    }
    \]
    Again, by Corollary \ref{coro:invertibility}, the premise is equiderivable with the sequent
    \begin{equation}\label{eqn:2a}
        \VARlt, \VARsym_{1}^{x}, \qbgn_0 , \VARrt , \VARfn \BS \left( \VARsym_{1} \BS h_0(\VARsym_{1}) \aconj \VARsym_{2} \BS h_0(\VARsym_{2}) \right), \Xi^\prime \yields \VARlt \mconj \VARok.
    \end{equation}
    where $\Xi^\prime = \FOROk \aconj \VARgo \BS \VARex, \FOROk \aconj \FORCmp\left(\qbgn_1, h_{\exists}\right), \Xi$. The sequent (\ref{eqn:2a}) cannot be obtained by any rule application except for that of $(A)_{\bsc}$. By Corollary \ref{coro:lower-bound-basic-derivation}, it is derivable if and only if $\VARlt\, \VARsym_{1}^{x}\, \qbgn_0 \, \VARrt \Rightarrow_{\SR}^\ast \Gamma^\prime\, \VARfn$ for $\Gamma^\prime$ such that $\Gamma^\prime, \VARsym_{1} \BS h_0(\VARsym_{1}) \aconj \VARsym_{2} \BS h_0(\VARsym_{2}), \Xi^\prime \yields \VARlt \mconj \VARok$ is derivable. Note that, if $\VARlt\, \VARsym_{1}^{x}\, \qbgn_0 \, \VARrt \Rightarrow_{\SR}^\ast \Gamma^\prime\, \VARfn$, then $\VARlt\, \VARsym_{1}^{x}\, \qbgn_0 \, \VARrt \Rightarrow_{\SR_0}^\ast \Gamma^\prime\, \VARfn$ because the initial string contains a state from $Q_0$ but it contains neither states from $Q_1$ nor $\VARex$. This property is preserved by any rule application of $\SR$, so, in the derivation $\VARlt\, \VARsym_{1}^{x}\, \qbgn_0 \, \VARrt \Rightarrow_{\SR}^\ast \Gamma^\prime\, \VARfn$, only rules from $\SR_0$ can be used.
    
    Now, it is time to recall some properties of $\SR_0 = \SR_{\VARlt,\VARrt,\VARfn}\left(\TM_0 \right)$. By Proposition \ref{prop:Church-Turing}, $\VARlt\, \VARsym_{1}^{x}\, \qbgn_0 \, \VARrt \Rightarrow_{\SR_0}^\ast \Gamma^\prime\, \VARfn$ holds if and only if $\Gamma^\prime = \VARlt \, \fun_{0}(\VARsym_{1}^{x})$. Therefore, (\ref{eqn:2a}) is derivable if and only if so is
    \begin{equation}\label{eqn:2a'}
        \VARlt , \fun_{0}(\VARsym_{1}^{x}), \VARsym_{1} \BS h_0(\VARsym_{1}) \aconj \VARsym_{2} \BS h_0(\VARsym_{2}), \Xi^\prime \yields \VARlt \mconj \VARok.
    \end{equation}
    Note that $\fun_{0}\left(\VARsym_{1}^{x}\right)$ ends with either $\VARsym_{1}$ or $\VARsym_{2}$; let us write $\fun_{0}\left(\VARsym_{1}^{x}\right) = \Delta\, \VARsym_i$. The sequent (\ref{eqn:2a'}) does not contain symbols from $Q_0 \cup Q_1 \cup \{\VARex\}$, so it cannot be obtained by $(A)_{\bsc}$; hence it can only be obtained by $({\aconj} L_i)_{\bsc}$ as follows:
    \[
        \infer[({\aconj} L_i)_{\bsc}]
        {
            \VARlt , \Delta, \VARsym_i, \VARsym_{1} \BS h_0(\VARsym_{1}) \aconj \VARsym_{2} \BS h_0(\VARsym_{2}), \Xi^\prime \yields \VARlt \mconj \VARok
        }
        {
            \VARlt , \Delta, h_0(\VARsym_i), \Xi^\prime \yields \VARlt \mconj \VARok
        }
    \]
    Let us consider two subcases.
    
    \textit{Subcase i.} $\fun_{0}\left(\VARsym_{1}^{x}\right) = \VARsym_{1}$. This happens, according to Definition \ref{Df:check-functions}, if and only if $\alpha = 0$ (recall that $x = \langle \varepsilon, \sharp \alpha, e \rangle$) and $x \in \mathbf{T}$. Since $h_0(\VARsym_1)=\VARok$, the sequent $\VARlt , \Delta, h_0(\VARsym_{1}), \Xi^\prime \yields \VARlt \mconj \VARok$ equals $\VARlt , \VARok, \FOROk \aconj \VARgo \BS \VARex, \FOROk \aconj \FORCmp\left(\qbgn_1, h_{\exists}\right), \Xi \yields \VARlt \mconj \VARok$ and is derivable by Lemma \ref{lemm:ok}.
		
    \textit{Subcase ii.} $\fun_{0}\left(\VARsym_{1}^{x}\right) = \VARsym_{1}^{x} \VARsym_{2}$. This happens if and only if $\alpha > 0$. The sequent $\VARlt , \Delta, h_0(\VARsym_{2}), \Xi^\prime \yields \VARlt \mconj \VARok$ equals 
    \begin{equation}\label{eqn:2aii}
        \VARlt, \VARsym_{1}^{x}, \VARgo, \FOROk \aconj \VARgo \BS \VARex, \FOROk \aconj \FORCmp\left(\qbgn_1, h_{\exists}\right), \Xi \yields \VARlt \mconj \VARok.
    \end{equation}
    	
    Concluding the arguments above, if $\alpha = 0$, then the sequent (\ref{eqn:seq-main-bsc}) we started with is derivable in $\IALbsc$ if and only if $x \in \mathbf{T}$. If $\alpha > 0$, then derivability of (\ref{eqn:seq-main-bsc}) is equivalent to that of (\ref{eqn:2aii}). The latter one's derivation in $\IALbsc$ must end as follows: 
    \[
        \infer[({\aconj} L_2)_{\bsc}]
        {
            \VARlt, \VARsym_{1}^{x}, \VARgo, \FOROk \aconj \VARgo \BS \VARex, \FOROk \aconj \FORCmp\left(\qbgn_1, h_{\exists}\right), \Xi \yields \VARlt \mconj \VARok
        }
        {
            \VARlt, \VARsym_{1}^{x}, \VARex, \FOROk \aconj \FORCmp\left(\qbgn_1, h_{\exists}\right), \Xi \yields \VARlt \mconj \VARok
        }
    \]
    The premise can be obtained only by means of the rule $(A)_{\bsc}$, which can indeed be applied because there is an occurrence of $\VARex$ in the left-hand side. Recall that
    \[
        \FORCmp\left(\qbgn_1,h_{\exists}\right) = \VARgo \BS \left( \qbgn_1 \mconj \VARrt \mconj \VARfn \BS \left( \left( \VARsym_{1} \BS h_{\exists}(\VARsym_{1}) \right) \aconj \left( \VARsym_{2} \BS h_{\exists}(\VARsym_{2}) \right) \right) \right)
    \]
    By Corollary \ref{coro:lower-bound-basic-derivation}, there exists $\Gamma^{\prime\prime}$ such that $\VARlt \, \VARsym_{1}^{x} \, \VARex \Rightarrow_{\SR}^\ast \Gamma^{\prime\prime} \, \VARgo$ and the sequent
    \begin{equation}\label{eqn:2a''}
        \Gamma^{\prime\prime}, \qbgn_1 \mconj \VARrt \mconj \VARfn \BS \left( \left( \VARsym_{1} \BS h_{\exists}(\VARsym_{1}) \right) \aconj \left( \VARsym_{2} \BS h_{\exists}(\VARsym_{2}) \right) \right), \Xi \yields \VARlt \mconj \VARok
    \end{equation}
    is derivable. The string rewriting system $\SR$ contains rules from $\SR_0$ and $\SR_1$ and also the rules $\VARex \to \VARsym_{2}\, \VARex, \VARex \to \VARgo$; note that each rule from $\SR_0 \cup \SR_1$ contains an element of $Q_0 \cup Q_1$ in left-hand sides. Since $\VARlt \, \VARsym_{1}^{x} \, \VARex$ does not contain symbols from $Q_0 \cup Q_1$, none of these rules can be applied in a derivation $\VARlt \, \VARsym_{1}^{x} \, \VARex \Rightarrow_{\SR}^\ast \Gamma^{\prime\prime} \, \VARgo$. Thus, $\Gamma^{\prime\prime} \, \VARgo$ is obtained from $\VARlt \, \VARsym_{1}^{x} \, \VARex$ by means of the rules $\VARex \to \VARsym_{2} \, \VARex$, $\VARex \to \VARgo$. Therefore, there is $y \in \omega$ such that $\Gamma^{\prime\prime} = \VARlt \, \VARsym_{1}^{x} \, \VARsym_{2}^{y}$. 

    By Corollary \ref{coro:invertibility}, the sequent (\ref{eqn:2a''}) is equiderivable with the following
    \begin{equation}\label{eqn:2a'''}
        \VARlt, \VARsym_{1}^{x}, \VARsym_{2}^{y}, \qbgn_1 , \VARrt , \VARfn \BS \left( \left( \VARsym_{1} \BS h_{\exists}(\VARsym_{1}) \right) \aconj \left( \VARsym_{2} \BS h_{\exists}(\VARsym_{2}) \right) \right), \Xi \yields \VARlt \mconj \VARok.
    \end{equation}
    Derivability of this sequent is analysed in the same way as that of (\ref{eqn:2a}). Namely, (\ref{eqn:2a'''}) is derivable if and only if so is 
    \begin{equation}\label{eqn:2a''''}
        \VARlt , \Delta^\prime, h_{\exists}(\VARsym_i), \Xi \yields \VARlt \mconj \VARok
    \end{equation}
    for $\Delta^\prime \, \VARsym_i = \fun_{1}\left(\VARsym_{1}^{x} \VARsym_{2}^{y}\right)$. Again, two subcases arise.
    
    \textit{Subcase A.} $\fun_{1}\left(\VARsym_{1}^{x} \VARsym_{2}^{y}\right) = \VARsym_{1}^{k} \VARsym_{2}$. This happens if and only if $y = \mathtt{c}\left( k, t \right)$ such that $k = \langle \overline{\varepsilon}, \sharp \beta, e^\prime \rangle$ for some $\beta < \alpha$ and $e^\prime \in \omega$ and such that $\UCF\left( e, k \right)$ converges in at most $t$ steps. In this case, $\Delta^\prime = \VARsym_{1}^{k}$ and the sequent (\ref{eqn:2a''''}) has the form $\VARlt , \VARsym_{1}^{k}, \VARsym_\forall \mconj \VARen, \FOREn(k_{M-1}), \ldots, \FOREn(k_1), \FORBrk \yields \VARlt \mconj \VARok$ (recall that $h_\exists(\VARsym_2) = \VARsym_\forall \mconj \VARen$). Its derivability is equivalent to that of
    \begin{equation}\label{eqn:2aA}
        \VARlt , \VARsym_{1}^{k}, \VARsym_\forall , \VARen, \FOREn(k_{M-1}), \ldots, \FOREn(k_1), \FORBrk \yields \VARlt \mconj \VARok
    \end{equation}
		
    \textit{Subcase B.} $\fun_{1}\left(\VARsym_{1}^{x} \VARsym_{2}^{y}\right) = \VARsym_{1}$. In this case, the sequent (\ref{eqn:2a''''}) has the form 
    \[
    \VARlt , \VARfl, \FOREn(k_{M-1}), \ldots, \FOREn(k_1), \FORBrk \yields \VARlt \mconj \VARok
    \]
    because $h_\exists(\VARsym_1) = \VARfl$. It is not derivable in $\IALbsc$, as one cannot obtain this sequent by means of any rule of $\IALbsc$. (Note that $\VARfl$ is a special variable that does not occur elsewhere; its purpose is to indicate that the computation fails because of the invalid input $(x,y)$.)

    Let us summarize all the reasonings for Case 2a.
    \begin{itemize}
        \item If (\ref{eqn:seq-main-bsc}) is derivable and $\gamma = \omega^{k_1} + \ldots + \omega^{k_M} > \alpha$, then the sequent (\ref{eqn:2aA}) is derivable for some $k = \langle \overline{\varepsilon}, \sharp \beta, e^\prime \rangle$ such that $\beta < \alpha$ and $\UCF\left( e, k \right)$ converges. The parameter $\gamma^\prime$ of the sequent (\ref{eqn:2aA}) is $\omega^{k_1}+\ldots+\omega^{k_{M-1}}$, and it is greater than $\beta$. The induction hypothesis implies that $k \in \mathbf{T}$. The latter implies that $x \in \mathbf{T}$ because $x$ is an index of a disjunction of formulae such that one of the disjuncts has index $k$.
        \item If $x$ belongs to $\mathbf{T}$, then there exists $k = \langle \overline{\varepsilon}, \sharp \beta, e^\prime \rangle \in \mathbf{T}$ such that $\beta < \alpha$ and $\UCF\left( e, k \right)$ converges. Let $t$ be the number of steps after which $\UCF\left( e, k \right)$ converges. One can apply the induction hypothesis and conclude that (\ref{eqn:2aA}) is derivable. Then, the sequent (\ref{eqn:2a'''}) is derivable for $y = \mathtt{c}\left( e, t \right)$, which implies the derivability of (\ref{eqn:seq-main-bsc}).
    \end{itemize}
    This concludes the proof of Case 2a.

    \textit{Case 2b:} $X = \Pi$. This case is dealt with in the same manner as Case 2a because $\FOREn_{\exists}$ and $\FOREn_{\forall}$ have similar structure. Let us briefly outline the main stages of the argument. Recall that $\FOREn_{\forall} = \VARgo \mconj 
        \FORCmp\left(\qbgn_0,h_0\right) \mconj 
		\left(\FOROk \aconj \VARgo \BS \left( \VARsym_{2}^\ast \mconj \VARgo \right) \right) \mconj
		\left( \FOROk \aconj \FORCmp\left(\qbgn_1, h_{\forall} \right) \right)$. First, one shows that the sequent $\VARlt, \VARsym_{1}^{x}, \FOREn_{\forall}, \Xi \yields \VARlt \mconj \VARok$ is derivable if and only if one of the following holds:
    \begin{enumerate}[i.]
        \item $\alpha=0$ and $x \in \mathbf{T}$;
        \item $\alpha>0$ and the following sequent is derivable in $\IALbsc$:
        \begin{equation}\label{eqn:2b}
            \VARlt, \VARsym_{1}^{x}, \VARgo, \FOROk \aconj \VARgo \BS \left( \VARsym_{2}^\ast \mconj \VARgo \right) , \FOROk \aconj \FORCmp\left(\qbgn_1, h_{\forall} \right) , \Xi \yields \VARlt \mconj \VARok.
        \end{equation}
        (Compare it with the sequent (\ref{eqn:2aii}).)
    \end{enumerate}
    Any derivation of the sequent (\ref{eqn:2b}) must end as follows:
    \[
        \infer[({\aconj} L_2)_{\bsc}]
        {
            \VARlt, \VARsym_{1}^{x}, \VARgo, \FOROk \aconj \VARgo \BS \left( \VARsym_{2}^\ast \mconj \VARgo \right) , \FOROk \aconj \FORCmp\left(\qbgn_1, h_{\forall} \right) , \Xi \yields \VARlt \mconj \VARok
        }
        {
            \infer[(\mconj L)]
            {
                \VARlt, \VARsym_{1}^{x}, \VARsym_{2}^\ast \mconj \VARgo , \FOROk \aconj \FORCmp\left(\qbgn_1, h_{\forall} \right) , \Xi \yields \VARlt \mconj \VARok
            }
            {
                    \VARlt, \VARsym_{1}^{x}, \VARsym_{2}^\ast , \VARgo , \FOROk \aconj \FORCmp\left(\qbgn_1, h_{\forall} \right) , \Xi \yields \VARlt \mconj \VARok
            }
        }
    \]
    It follows from Corollary \ref{coro:invertibility} that the sequent 
    \begin{equation}\label{eqn:2b*}
        \VARlt, \VARsym_{1}^{x}, \VARsym_{2}^\ast , \VARgo , \FOROk \aconj \FORCmp\left(\qbgn_1, h_{\forall} \right) , \Xi \yields \VARlt \mconj \VARok
    \end{equation}
    is derivable if and only if so is 
    \begin{equation}\label{eqn:2b'}
        \VARlt, \VARsym_{1}^{x}, \VARsym_{2}^y , \VARgo , \FOROk \aconj \FORCmp\left(\qbgn_1, h_{\forall} \right) , \Xi \yields \VARlt \mconj \VARok
    \end{equation}
    for each $y \in \omega$. The sequent (\ref{eqn:2b'}) is equiderivable with
    \begin{equation}\label{eqn:2b''}
        \VARlt, \VARsym_{1}^{x}, \VARsym_{2}^{y}, \qbgn_1 , \VARrt , \VARfn \BS \left( \left( \VARsym_{1} \BS h_{\forall}(\VARsym_{1}) \right) \aconj \left( \VARsym_{2} \BS h_{\forall}(\VARsym_{2}) \right) \right), \Xi \yields \VARlt \mconj \VARok.
    \end{equation}
    This one is similar to (\ref{eqn:2a'''}), so again two subcases arise.
 
 If $\fun_{1}(\VARsym_{1}^x \VARsym_{2}^y) = \VARsym_{1}^{k} \VARsym_{2}$, then $y = \mathtt{c}\left( k, t \right)$, where $k = \langle \overline{\varepsilon}, \sharp \beta, e^\prime \rangle$ for $\beta < \alpha$ and $\UCF\left( e, k \right)$ converges. Let $t$ be an upper bound on the number of steps in which $\UCF\left( e, k \right)$ converges. In this case, the sequent (\ref{eqn:2b''}) is derivable if and only if so is
        \begin{equation}\label{eqn:2bA}
            \VARlt , \VARsym_{1}^{k}, \VARsym_\exists , \VARen, \FOREn(k_{M-1}), \ldots, \FOREn(k_1), \FORBrk \yields \VARlt \mconj \VARok
        \end{equation}
     
      Otherwise, $\fun_{1}\left(\VARsym_{1}^{x} \VARsym_{2}^{y}\right) = \VARsym_{1}$. In this case, the sequent (\ref{eqn:2b''}) is equiderivable with the sequent 
        \[
        \VARlt , \VARok, \FOREn(k_{M-1}), \ldots, \FOREn(k_1), \FORBrk \yields \VARlt \mconj \VARok
        \] 
        (recall that $h_{\forall}(\VARsym_{1}) = \VARok$). It is derivable in $\IALbsc$ by Lemma \ref{lemm:ok}.

    Combining these arguments we obtain the following.

\begin{itemize}
\item
    If (\ref{eqn:seq-main-bsc}) is derivable and $\omega^{k_1}+\ldots+\omega^{k_M}>\alpha$, then the sequent (\ref{eqn:2bA}) is derivable for each $k = \langle \overline{\varepsilon}, \sharp \beta, e^\prime \rangle$ such that $\beta < \alpha$, $e^\prime \in \omega$ and $\UCF\left( e, k \right)$ converges. By the induction hypothesis, this means that all such $k$'s are members of $\mathbf{T}$. The latter implies that $x \in \mathbf{T}$ because $x$ is an index of the conjunction of the formulae such that their indices $k$ satisfy the above properties.
        
     \item  If $x$ belongs to $\mathbf{T}$, then, for each $k = \langle \overline{\varepsilon}, \sharp \beta, e^\prime \rangle$ such that $\beta < \alpha$ and such that $\UCF\left( e, k \right)$ converges in at most $t$ steps, it holds that $k \in \mathbf{T}$. The induction hypothesis implies that (\ref{eqn:2b''}) is derivable for $y = \mathtt{c}\left( e, t \right)$. If $y$ is not of this form, then (\ref{eqn:2b''}) is derivable too, hence (\ref{eqn:2b*}) is derivable. The derivability of (\ref{eqn:seq-main-bsc}) follows. %\qedhere
     \end{itemize}
\end{proof}

Theorem \ref{Th:lower-bound} now directly follows from Lemma \ref{lemm:lower-bound-main}.

\begin{proof}[Proof of Theorem \ref{Th:lower-bound}]
    The set $\mathbf{T}$ is many-one reducible to the set of sequents derivable from $\Hc$ in $\IAL$ by means of the function
    \[
    x\ \mapsto\
    \left(\VARlt, \VARsym_{1}^{x}, \VARsym_{\mathsf{Q}}, \VARen, \FOREn(k_M(x)), \ldots, \FOREn(k_1(x)), \FORBrk \yields \VARlt \mconj \VARok
    \right)
    \]
    where $\omega^{k_1(x)} + \ldots + \omega^{k_M(x)} = \alpha+1$ if $x = \langle \varepsilon, \sharp \alpha, e \rangle$. If $x$ is not of this form, then the reduction maps it to some underivable sequent, say, to $p \yields q$. Lemma \ref{lemm:lower-bound-main} implies that $x \in \mathbf{T}$ if and only if the sequent $${!}\Upsilon_{\Hc}, \VARlt, \VARsym_{1}^{x}, \VARsym_{\mathsf{Q}}, \VARen, \FOREn(k_M(x)), \ldots, \FOREn(k_1(x)), \FORBrk \yields \VARlt \mconj \VARok$$ is derivable in $\EIAL$. By Theorem \ref{Th:deduction} this is equivalent to the derivability of $\VARlt, \VARsym_{1}^{x}, \VARsym_{\mathsf{Q}}, \VARen, \FOREn(k_M(x)), \ldots, \FOREn(k_1(x)), \FORBrk \yields \VARlt \mconj \VARok$ from $\Hc$ in $\IAL$, which proves correctness of the reduction.
\end{proof}

Observe that the main construction uses only the connectives $\BS, \mconj, \aconj, {}^\ast$. Therefore, reasoning from monoidal inequations in the fragment of $\IAL$ where only $\BS, \mconj, \aconj, {}^\ast$ can be used is $\Sigma^0_{\omega^{\omega}}$-hard as well.

%%%

%\newpage
\section{Upper bound arguments}\label{S:Stanislav}

Now we are going to show that the closure ordinal of $\mathscr{D}^{-}$ is at most $\omega^{\omega}$, and the corresponding fixed point is reducible to $H \left( \omega^{\omega} \right)$. These complexity bounds turn out to be precise, as can be shown with the help of Theorem~\ref{Th:lower-bound}.

For our present purposes, it is useful to replace the finitary part of $\EIALminus$ by a special rule ($\mathrm{Fin}$) which operates as follows: $s$ is derivable from $s_1$, \ldots, $s_m$ by ($\mathrm{Fin}$) if{f} $s$ is derivable from $s_1$, \ldots, $s_m$ in $\EIALminus$ without using (${}^{\ast} L_{\omega}$). Define $\GEIALminus$ to be the resulting calculus, whose only rules are ($\mathrm{Fin}$) and the $\omega$-rule. Evidently, we have:

\begin{prop} \label{prop-gen}
Any sequent derivable in $\GEIALminus$ is derivable in $\EIALminus$, and vice versa.
\end{prop}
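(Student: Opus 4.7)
The plan is to prove both directions by transfinite induction on the structure of derivations, exploiting the fact that the relation ``derivable in $\EIALminus$ without using $({}^* L_\omega)$'' is exactly what ($\mathrm{Fin}$) packages up as a single step.

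For the direction from $\GEIALminus$ to $\EIALminus$, I would proceed by transfinite induction on a $\GEIALminus$-derivation $\mathcal{D}$ of the target sequent $s$. If the last rule of $\mathcal{D}$ is $({}^* L_\omega)$ with premises $s_n$ for $n \in \omega$, then by the induction hypothesis each premise $s_n$ has an $\EIALminus$-derivation, and applying $({}^* L_\omega)$ on top yields an $\EIALminus$-derivation of $s$. If the last rule is $(\mathrm{Fin})$ with premises $s_1, \ldots, s_m$, then by the induction hypothesis each $s_i$ has an $\EIALminus$-derivation $\mathcal{D}_i$, and by definition of $(\mathrm{Fin})$ there is an $({}^* L_\omega)$-free $\EIALminus$-derivation $\mathcal{E}$ of $s$ from the hypotheses $s_1, \ldots, s_m$; grafting each $\mathcal{D}_i$ onto the corresponding leaf of $\mathcal{E}$ gives the required $\EIALminus$-derivation of $s$.

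For the converse direction I would again use transfinite induction on an $\EIALminus$-derivation $\mathcal{D}$ of $s$. The idea is to look at the maximal $({}^* L_\omega)$-free initial segment of $\mathcal{D}$ from the root: collect all sequents $s_1, \ldots, s_m$ that appear as either axioms or conclusions of $({}^* L_\omega)$-applications closest to the root, so that the sub-derivation from $s_1, \ldots, s_m$ down to $s$ does not use $({}^* L_\omega)$. This whole initial segment can be replaced by a single application of $(\mathrm{Fin})$ yielding $s$ from $s_1, \ldots, s_m$. For each leaf $s_i$ that was originally obtained by an axiom, the one-step $(\mathrm{Fin})$ application with no premises works directly. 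For each leaf $s_i$ that was the conclusion of an $({}^* L_\omega)$-application with premises $s_i^{(n)}$ ($n \in \omega$), the induction hypothesis yields a $\GEIALminus$-derivation of each $s_i^{(n)}$, and we reattach them by a single $({}^* L_\omega)$-step producing $s_i$ in $\GEIALminus$.

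There is no real obstacle here since this is a purely structural reorganisation and both calculi agree on axioms and on $({}^* L_\omega)$; the only subtlety worth mentioning is well-foundedness. Because $\EIALminus$-derivations are well-founded trees, the ``initial $({}^* L_\omega)$-free segment'' used in the converse direction is a finite tree (each branch must hit either an axiom or an $({}^* L_\omega)$-application at some finite depth, since between two $({}^* L_\omega)$-applications on a branch only finitary rules may appear; more precisely, one stops at the first $({}^* L_\omega)$-application along each branch), so the induction on the heights of sub-derivations rooted at the $s_i$ is well-defined. This justifies the transfinite induction and completes the equivalence.
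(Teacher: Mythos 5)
Your proof is correct. The paper offers no argument for this proposition at all---it is introduced with ``Evidently, we have:''---and your two-way structural reorganisation (grafting the witnessing $({}^* L_\omega)$-free derivations when unfolding $(\mathrm{Fin})$, and conversely absorbing the maximal $({}^* L_\omega)$-free initial segment of a well-founded $\EIALminus$-derivation into a single $(\mathrm{Fin})$ step, with the K\"onig-style remark guaranteeing that this segment is finite and hence supplies only finitely many premises $s_1, \ldots, s_m$) is exactly the routine verification the authors leave implicit.
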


Naturally, $\underline{\mathscr{D}}^-$ will denote the immediate derivability operator of $\GEIALminus$. Moreover, if $s$ is a sequent derivable in $\GEIALminus$, we shall write $\underline{\mathrm{rank}}^{-} \left( s \right)$ for the rank of $s$ with respect to $\underline{\mathscr{D}}^-$.

\begin{theo} \label{theo-ubo}
The closure ordinals of $\underline{\mathscr{D}}^-$ and $\mathscr{D}^-$ are less than or equal to $\omega^{\omega}$.
\end{theo}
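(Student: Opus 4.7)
By Theorem~\ref{Th:EACTomega-cutelim} every derivable sequent admits a cut-free derivation in $\EIALminus$, and equivalently in $\GEIALminus$ (Proposition~\ref{prop-gen}). Cut-free derivations enjoy the extended subformula property: all formulas occurring above~$s$ are either subformulas of formulas of~$s$ or expressions $A^n$ arising from some $A^*$ via $({}^{\ast} L_\omega)$. The plan is to prove the bound for $\underline{\mathrm{rank}}^-$ first; the same ordinal-analytic argument (with each finitary rule counting as a single step rather than being packaged into $(\mathrm{Fin})$) will yield the analogous bound for $\mathrm{rank}^-$, since the target ordinals $\omega^{k+1}$ are closed under the relevant finite additions.

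For each formula $F$ define the \emph{star-depth} $\ell(F)$ by $\ell(F) = 0$ whenever $F$ is a variable, a constant, or a $*$-free formula (in particular, every ${!}$-formula used in~$\EIALminus$); $\ell(A \circ B) = \max(\ell(A), \ell(B))$ for every binary connective~$\circ$; and $\ell(A^*) = \ell(A) + 1$. Extend $\ell$ to sequents by the maximum over occurring formulas, and let $n_k(s)$ count the top-level formulas of~$s$ with star-depth exactly~$k$. The crucial structural observation is that, because ${!}$-formulas in $\EIALminus$ are $*$-free, the contraction, weakening and permutation rules for~${!}$ never multiply any formula of positive star-depth. Hence in a cut-free derivation above~$s$ no new formula of star-depth $> \ell(s)$ appears and, for each $k>0$, the count $n_k$ remains uniformly bounded in terms of~$s$. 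Moreover, when $({}^{\ast} L_\omega)$ is applied to $A^*$ with $\ell(A^*) = k$, the premise $s_n$ satisfies $n_k(s_n) = n_k(s) - 1$, while the new occurrences of~$A$ have star-depth $k - 1$ only.

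The central technical claim I would prove is
\[
\underline{\mathrm{rank}}^-(s)\ <\ \omega^{\ell(s) + 1}
\]
for every cut-free derivable~$s$, by a nested induction: the outer induction is on the lex-ordered pair $(\ell(s), n_{\ell(s)}(s))$, and within each lex stratum there is a secondary induction on $\underline{\mathrm{rank}}^-(s)$. In the $(\mathrm{Fin})$-case the premises satisfy $\ell(s_i) \leq \ell(s)$ and the counts $n_{\ell(s)}(s_i)$ are bounded by a finite function of~$s$, so the limit-ordinal property of $\omega^{\ell(s)+1}$ yields the desired bound. For an $({}^{\ast} L_\omega)$-application to some~$A^*$ I split into two subcases: either $\ell(A^*) < \ell(s)$, and the premises $s_n$ stay in the same outer lex stratum but reduce a lower count, delivering (by the outer hypothesis) a sharper bound $\underline{\mathrm{rank}}^-(s_n) < \omega^{\ell(s)}$ that keeps $\sup_n \underline{\mathrm{rank}}^-(s_n) + 1 < \omega^{\ell(s)+1}$; or $\ell(A^*) = \ell(s)$, and the premises lie in a strictly smaller lex stratum, where the outer hypothesis yields a bound of the form $\omega^{\ell(s)} \cdot c$ with $c < \omega$, again keeping $\sup_n + 1 < \omega^{\ell(s)+1}$.

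Since $\sup_{k < \omega} \omega^{k+1} = \omega^{\omega}$, every derivable $s$ has $\underline{\mathrm{rank}}^-(s) < \omega^{\omega}$, so the closure ordinal of $\underline{\mathscr{D}}^-$ is at most $\omega^{\omega}$; reading the same induction step-by-step rather than per $(\mathrm{Fin})$-block establishes $\mathrm{rank}^-(s) < \omega^{\ell(s)+1}$ and thus the corresponding bound for $\mathscr{D}^-$. The main obstacle I anticipate is the secondary lex induction on $(\ell(s), n_{\ell(s)}(s))$: a na\"ive induction on $\underline{\mathrm{rank}}^-$ alone is insufficient, because the supremum over the $\omega$-many premises of an $({}^{\ast} L_\omega)$-application can reach the limit $\omega^{\ell(s)+1}$ exactly. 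The refinement is powered by the observation highlighted in the second paragraph, namely that the $\EIALminus$-restriction forcing ${!}$ to act only on $*$-free formulas prevents any finitary rule from multiplying formulas of positive star-depth, so the counts $n_{\ell(s)}$ are uniformly controlled along cut-free derivations.
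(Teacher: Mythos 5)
Your target inequality $\underline{\mathrm{rank}}^{-}(s) < \omega^{\ell(s)+1}$ is true, but the induction you set up to prove it does not go through, and the failure is exactly at the point you try to wave away. Your measure $(\ell(s), n_{\ell(s)}(s))$ is not monotone under the finitary rules: inside a $(\mathrm{Fin})$ block (or, in the step-by-step reading, at a single $(\mconj L)$ or $({\BS}L)$ step) a top-level formula such as $p^{\ast} \mconj q^{\ast}$ of star-depth $\ell(s)$ is decomposed into \emph{two} top-level formulas of the same depth, so a premise $s_i$ can have $n_{\ell(s)}(s_i) > n_{\ell(s)}(s)$, i.e.\ it lies in a strictly \emph{larger} lex stratum. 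There neither your outer hypothesis (smaller lex pair) nor your inner hypothesis (same stratum, smaller rank) applies, and the remark that the counts are ``bounded by a finite function of $s$'' plus ``the limit-ordinal property of $\omega^{\ell(s)+1}$'' does not produce a rank bound for $s_i$ out of nothing. There is a second mismatch in the $({}^{\ast}L_\omega)$ case: the statement you are proving is only ``$\underline{\mathrm{rank}}^{-} < \omega^{\ell+1}$'', yet to keep the supremum over the $\omega$ premises below $\omega^{\ell(s)+1}$ you invoke sharper bounds ($< \omega^{\ell(s)}$, or $\omega^{\ell(s)}\cdot c$ with $c<\omega$) that do not follow from that statement; any refined bound depending on $n_{\ell(s)}$ would again be destroyed by the growth of $n_{\ell(s)}$ under the finitary rules. (Also, your blanket claim that every $n_k$, $k>0$, stays uniformly bounded along the derivation is false for $k$ below the top depth: $({}^{\ast}L_\omega)$ applied to a star of depth $k+1$ produces premises containing arbitrarily many depth-$k$ formulas.)

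The repair is to replace the max-plus-top-level-count measure by a quantity that is non-increasing under all finitary inferences and drops at $({}^{\ast}L_\omega)$, which forces you to count \emph{all} occurrences of starred subformulas, weighted by their nesting depth, and to combine the counts additively. This is precisely what the paper does: it defines $\mu$ into $\Ns \cong \omega^{\omega}$ with $\mu(A\circ B)=\mu(A)\ncplus\mu(B)$ and $\mu(A^{\ast})=(\mu(A){\uparrow})\ncplus(3,0,0,\ldots)$, extends it additively to sequents, and proves $\underline{\mathrm{rank}}^{-}(s)\leqslant\dot{\mu}(s)$ by a single induction on $\dot{\mu}(s)$ (treating a maximal $(\mathrm{Fin})$ block whose premises are axioms or conclusions of the $\omega$-rule, the constant $3$ absorbing the extra steps), and then transfers the bound to $\mathscr{D}^{-}$ via $\mathrm{rank}^{-}(s)\leqslant\omega\cdot\underline{\mathrm{rank}}^{-}(s)$ rather than by redoing the induction step-by-step. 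Your star-depth idea is essentially the leading coordinate of $\mu$; without the full additive vector the induction has no valid step at the finitary rules.
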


\begin{proof}
It is easy to verify that for every sequent $s$ derivable in $\EIALminus$,
\[
{\mathrm{rank}^{-} \left( s \right)}\ \leqslant\ {\omega \cdot \underline{\mathrm{rank}}^{-} \left( s \right)} .
\]
Hence it suffices to prove that each $\underline{\mathrm{rank}}^{-} \left( s \right)$, with $s$ as above, is less than $\omega^{\omega}$. To this end, we define the following operations on $\Ns$:
\begin{align*}
{\left( m_0, m_1, \ldots \right) \ncplus \left( n_0, n_1, \ldots \right)}\
&:=\ {\left( m_0 + n_0, m_1 + n_1, \ldots \right)} ;\\
{\left( m_0, m_1, \ldots \right) {\uparrow}}\
&:=\ {\left( 0, m_0, m_1, \ldots \right)} .
\end{align*}
Next, let $\mu$ be the function from the set of all formulae (not containing $^\ast$ in the scope of $!$) to $\Ns$ given by:
\begin{align*}
{\mu \left( A \right)}\
&:=\ {\left( 0, 0, 0, \ldots \right)} \quad \text{if}\ A\ \text{does~not contain}\ {}^\ast ;\\
{\mu \left( A \circ B \right)}\
&:=\ {\mu \left( A \right) \ncplus \mu \left( B \right)} \quad \text{for each} \enskip {\circ \in \left\{ \BS, \SL, \mconj, \aconj, \adisj \right\}} ;\\ %\text{if $A \circ B$ contains $\ast$}
{\mu \left( A^{\ast} \right)}\
&:= {\left( \mu \left( A \right) {\uparrow} \right) \ncplus \left( 2, 0, 0, \ldots \right)} .
\end{align*}
So in particular, if $A$ begins with $!$, then $\mu \left( A \right) = 0$. We extend $\mu$ to sequents by taking
\[
{\mu \left( A_1, \ldots, A_m \yields B \right)}\
:=\ {\mu \left( A_1 \right) \ncplus \ldots \ncplus \mu \left( A_n \right) \ncplus \mu \left( B \right)} .
\]
Denote $\mu \circ \nu$ by $\dot{\mu}$ (see Subsection~\ref{subsec-ha} for the definition of $\nu$). We claim that for every sequent $s$ derivable in $\GEIALminus$,
\begin{equation*}
{\underline{\mathrm{rank}}^{-} \left( s \right)}\ \leqslant\ {\dot{\mu} \left( s \right)} ,
\tag{$\dag$}
\end{equation*}
by induction on $\dot{\mu} \left( s \right)$. Evidently, if $\dot{\mu} \left( s \right) = 0$, then ${}^\ast$ does~not occur in $s$, and therefore $s$ can be derived by one application of ($\mathrm{Fin}$), which implies $\underline{\mathrm{rank}}^{-} \left( s \right) = 0$.\footnote{Obviously,
if $s$ is an {axiom}, we also have $\underline{\mathrm{rank}}^{-} \left( s \right) = 0$.}
Assume that $\dot{\mu} \left( s \right) > 0$, and consider an arbitrary derivation $\mathfrak{T}$ of $s$ in $\GEIALminus$.
\begin{enumerate}

\item[a.] Suppose that $\mathfrak{T}$ ends with an application of the $\omega$-rule, whose premises are $s_0$, $s_1$, \ldots; so $s$ is obtained from $s_0$, $s_1$, \ldots\ at the last step of  $\mathfrak{T}$. Then
\begin{align*}
{\underline{\mathrm{rank}}^{-} \left( s \right)}\
&\leqslant\
{\sup \left\{ \underline{\mathrm{rank}}^{-} \left( s_i \right) \mid i \in \omega \right\} + 1} \\ &\leqslant\
{\sup \left\{ \dot{\mu} \left( s_i \right) \mid i \in \omega \right\} + 1}\ <\
{\dot{\mu} \left( s \right)} ,
\end{align*}
where the second inequality is guaranteed by the inductive hypothesis.

\item[b.] Suppose that $\mathfrak{T}$ ends with an application of ($\mathrm{Fin}$). {Then} we can view $\mathfrak{T}$ as ending with a possibly broader application of ($\mathrm{Fin}$) with premises $s_1$, \ldots, $s_m$, each of which is either an axiom or derived by the $\omega$-rule in $\mathfrak{T}$. Observe that
\[
{\max \left\{ {\dot{\mu} \left( s_1 \right)}, \ldots, {\dot{\mu} \left( s_m \right)} \right\}}\ \leqslant\ {\dot{\mu} \left( s \right)}.
\]
Further, using the argument for (a), one can obtain $\underline{\mathrm{rank}}^{-} \left( s_i \right) < {\dot{\mu}} \left( s_i \right)$ for each $i = 1, \ldots, m$. Thus,
since ($\mathrm{Fin}$) contributes one more inference step, we have 
\[
{\underline{\mathrm{rank}}^{-} \left( s \right)}\ \leqslant\
{\max \left\{ {\underline{\mathrm{rank}}^{-} \left( s_1 \right)} , \ldots , {\underline{\mathrm{rank}}^{-} \left( s_m \right)} \right\} + 1}\ \leqslant\
{\dot{\mu} \left( s \right)} ,
\]
as desired.
\end{enumerate}
\end{proof}

To proceed further, we need a suitable technical lemma that will allow us to provide upper bounds for the iterations of any given {positive} elementary operator up to $\omega^{\omega}$ in a uniform way. Recall that each ordinal $\alpha$ can be uniquely represented as $\beta + n$ where $\beta \in \LOrd$ and $n \in \omega$. In particular, there are two functions $\mathrm{base}: \omega^{\omega} \rightarrow \LOrd$ and $\mathrm{step}: \omega^{\omega} \rightarrow \omega$ such that for every $\alpha \in \Ord$,
\[
\alpha\ =\
{\mathrm{base} \left( \alpha \right)}\ +\ {\mathrm{step} \left( \alpha \right)} .
\]
Moreover, if we identify the ordinals less than $\omega^{\omega}$ with their notations, then $\mathrm{base}$ and $\mathrm{step}$ --- as functions from $\omega$ to $\omega$ --- become computable, since
\begin{align*}
{\mathrm{base} \left( \rho \left( \left( m_0, m_1, m_2, \ldots \right) \right) \right)}\ &=\
{\rho \left( \left( 0, m_1, m_2, \ldots \right) \right)} ,\\
{\mathrm{step} \left( \rho \left( \left( m_0, m_1, m_2, \ldots \right) \right) \right)}\ &=\
{\rho \left( \left( m_0, 0, 0, \ldots \right) \right)} .
\end{align*}
Next, given a non-zero $n \in \omega$, define $\mathrm{lift}_n: \omega^{\omega} \rightarrow \omega^{\omega}$ by
\[
{\mathrm{lift}_n \left( \alpha \right)}\ :=\
{\mathrm{base} \left( \alpha \right) + \mathrm{step} \left( \alpha \right) \cdot n + 1},
\]
or, in our ordinal notation,
\[
{\mathrm{lift}_n \left( \rho \left( \left( m_0, m_1, m_2, \ldots \right) \right) \right)}\ =\ 
{\rho \left( \left( m_0 \cdot n + 1, m_1, m_2, \ldots \right) \right)} .
\]
Finally, to deal with many-one reductions, we shall use the numbering induced by the universal function $\mathtt{U}$. For any $S, T \subseteq \omega$, let
\[
{\mathrm{Index} \left( S; T \right)}\ :=\
{\left\{
k \in \omega \mid
\mathtt{U}_k ~ \text{many-one reduces} ~ S ~ \text{to} ~ T
\right\}} .
\]
So $k \in \mathrm{Index} \left( S; T \right)$ if{f} $\mathtt{U}_k$ is total and ${\left( \mathtt{U}_k \right)}^{-1} \left[ T \right] = S$. We call elements of $\mathrm{Index} \left( S; T \right)$ \emph{indices of $S$ with respect to $T$}. Each of these encodes a program that computes a function reducing $S$ to $T$, and therefore provides a way of showing that the complexity of $S$ is bounded by that of $T$. It is straightforward to establish the following; cf.\ \cite[Theorem~6.2]{Kuznetsov&Speranski-2023-SL}.\footnote{In
effect, the argument provided in \cite{Kuznetsov&Speranski-2023-SL} is more complicated, since it deals with Kleene's $\mathcal{O}$.}

\begin{lemm} \label{lemm-ubc}
Let $n \in \omega \setminus \left\{ 0 \right\}$ and $\Phi \left( x, X \right)$ be a positive $\Sigma^0_n$-for\-mu\-la. Then there is a computable $h: \omega \rightarrow \omega$ such that for every $\alpha < \omega^{\omega}$:
\begin{align*}
{h \left( \sharp \alpha \right)}\
&\in\
{\mathrm{Index} \left(
{{[ \Phi ]}^{\alpha} \left( \varnothing \right)},
{H \left( \mathrm{lift}_n \left( \alpha \right) \right)}
\right)}\\
&=\
{\mathrm{Index} \,(
{{[ \Phi ]}^{\alpha} \left( \varnothing \right)},
{\mathtt{J}^{\mathrm{step} \left( \alpha \right) \cdot n + 1} \left( H \left( \mathrm{base} \left( \alpha \right) \right) \right)}
)} .
\end{align*}
In particular, if $\alpha \in \LOrd$, then $h \left( \sharp \alpha \right)$ is an index of ${[ \Phi ]}^{\alpha} \left( \varnothing \right)$ with respect to $\mathtt{J} \left( H \left( \alpha \right) \right)$.
\end{lemm}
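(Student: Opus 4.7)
The plan is to construct $h$ by transfinite induction on $\alpha < \omega^\omega$, distinguishing the zero, successor, and limit cases using the decomposition of $\alpha$ via $\nu^{-1}$, and invoking Kleene's recursion theorem to turn this recursive specification into a single total computable function. Since $\rho^{-1}\circ \nu$ is a univalent computable notation system, the case distinction and the relevant structural data (predecessor ordinal, values of $\sharp\mathrm{lift}_n(\beta)$, enumeration of ordinals below a limit, etc.) are all computable from $\sharp\alpha$. The case $\alpha = 0$ is trivial: ${[\Phi]}^0(\varnothing) = \varnothing$ and $H(\mathrm{lift}_n(0)) = \mathtt{J}(\varnothing) \ne \omega$, so any total computable constant with value outside $\mathtt{J}(\varnothing)$ supplies a reduction.

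For the successor step $\alpha = \beta + 1$, the key identity is $\mathrm{lift}_n(\alpha) = \mathrm{lift}_n(\beta) + n$, whence $H(\mathrm{lift}_n(\alpha)) = \mathtt{J}^n(H(\mathrm{lift}_n(\beta)))$. The inductive hypothesis gives $\mathtt{U}_{h(\sharp\beta)}$ reducing ${[\Phi]}^\beta(\varnothing)$ to $H(\mathrm{lift}_n(\beta))$; substituting the predicate ``$\mathtt{U}_{h(\sharp\beta)}(y) \in H(\mathrm{lift}_n(\beta))$'' for each occurrence of $X(y)$ in $\Phi(x, X)$ yields a $\Sigma^0_n$-definition of ${[\Phi]}^\alpha(\varnothing)$ relative to the oracle $H(\mathrm{lift}_n(\beta))$, which by the relativized form of Folklore~\ref{folk-arithm} is uniformly many-one reducible to $\mathtt{J}^n(H(\mathrm{lift}_n(\beta))) = H(\mathrm{lift}_n(\alpha))$. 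For the limit step, note that $\mathrm{lift}_n(\alpha) = \alpha + 1$ and, crucially, $\mathrm{lift}_n(\beta) < \alpha$ for every $\beta < \alpha$: indeed $\mathrm{lift}_n(\beta) = \beta + (\mathrm{step}(\beta)\cdot(n-1)+1)$ exceeds $\beta$ only by a finite amount, and any limit ordinal is closed under right addition of natural numbers. Combined with the definition of $H$ at limits, this gives
\[
k \in {[\Phi]}^\alpha(\varnothing)
\ \iff\
\exists \beta < \alpha :\ \mathtt{c}\bigl(\mathtt{U}_{h(\sharp\beta)}(k),\ \sharp\mathrm{lift}_n(\beta)\bigr) \in H(\alpha),
\]
which is $\Sigma^0_1$ in $H(\alpha)$ uniformly in $k$, hence many-one reducible to $\mathtt{J}(H(\alpha)) = H(\mathrm{lift}_n(\alpha))$.

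The main technical obstacle is packaging all of the above into one total computable $h$, since both the successor and the limit clauses refer back to values $h(\sharp\beta)$ for smaller $\beta$. This is handled by the standard application of Kleene's recursion theorem: one writes the case analysis as an effective operator $e \mapsto e'$ that, given an index $e$ playing the role of $h$, produces a new index $e'$ implementing one step of the construction (treating recursive calls as queries to $\mathtt{U}_e$), and then takes a fixed point $e^*$ to obtain $h := \mathtt{U}_{e^*}$. Correctness on each $\sharp\alpha$ is verified by induction on $\alpha < \omega^\omega$. The equality between $H(\mathrm{lift}_n(\alpha))$ and $\mathtt{J}^{\mathrm{step}(\alpha) \cdot n + 1}(H(\mathrm{base}(\alpha)))$ stated in the lemma is purely notational, obtained by unfolding $\mathrm{lift}_n(\alpha) = \mathrm{base}(\alpha) + \mathrm{step}(\alpha) \cdot n + 1$ via the successor clause of $H$.
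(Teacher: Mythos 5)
Your proposal is correct and is essentially the argument the paper has in mind: the paper leaves this lemma as ``straightforward'' with a pointer to the analogous (but more involved, Kleene's-$\mathcal{O}$-based) Theorem~6.2 of \cite{KuznetsovSperanski2022}, and your effective transfinite recursion along $\rho^{-1}\circ\nu$ --- using $\mathrm{lift}_n(\beta+1)=\mathrm{lift}_n(\beta)+n$ with the relativized form of Folklore~\ref{folk-arithm} at successors, and the limit clause of $H$ together with $\mathrm{lift}_n(\beta)<\alpha$ at limits, packaged via the recursion theorem --- is exactly that standard route, simplified as intended by the univalent notation system for $\omega^{\omega}$.
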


Combining Lemma~\ref{lemm-ubc} with Theorem~\ref{theo-ubo} we obtain:
  
\begin{theo}
The collection of all sequents derivable in $\EIALminus$ belongs to $\Sigma^0_{\omega^{\omega}}$.
\end{theo}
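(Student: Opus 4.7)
\medskip
\noindent\textit{Proof proposal.} The plan is to combine Theorem~\ref{theo-ubo} with Lemma~\ref{lemm-ubc}; the key observation is that the proof of the former actually supplies a \emph{computable} upper bound on the rank of each derivable sequent, which lets us replace existential quantification over stages by a direct many-one reduction. First I would note that the set of sequents derivable in $\EIALminus$ is precisely $\mathtt{LFP}(\mathscr{D}^{-})$; since $\mathscr{D}^{-}$ was observed to be elementary, we can fix a positive $\mathcal{L}_2$-formula $\Phi(x, X)$ with no set quantifiers such that $\mathscr{D}^{-} = [\Phi]$, and pick some $n \geqslant 1$ with $\Phi \in \Sigma^0_n$.

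Next I would invoke the bound $\mathrm{rank}^{-}(s) \leqslant \omega \cdot \underline{\mathrm{rank}}^{-}(s) \leqslant \omega \cdot \dot{\mu}(s)$ established inside the proof of Theorem~\ref{theo-ubo}, noting that $\dot{\mu}$ effectively assigns to each sequent an ordinal below $\omega^\omega$ (both $\mu$ and the notation system $\sharp$ for ordinals below $\omega^\omega$ are computable). Setting $\tau(s) := \omega \cdot \dot{\mu}(s) + 1$, I obtain a computable map $s \mapsto \sharp \tau(s)$ together with the equivalence $s \in \mathtt{LFP}(\mathscr{D}^{-})$ iff $s \in (\mathscr{D}^{-})^{\tau(s)}(\varnothing)$, which dispenses with any existential quantifier over stages: if $s$ lies in $\mathtt{LFP}(\mathscr{D}^{-})$ then its rank is at most $\omega \cdot \dot{\mu}(s) < \tau(s)$, and otherwise $s$ is outside $(\mathscr{D}^{-})^{\alpha}(\varnothing)$ for every $\alpha$.

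Then I would feed $\Phi$ into Lemma~\ref{lemm-ubc}, obtaining a computable $h$ such that $\mathtt{U}_{h(\sharp\alpha)}$ many-one reduces $(\mathscr{D}^{-})^\alpha(\varnothing)$ to $H(\mathrm{lift}_n(\alpha))$ for each $\alpha < \omega^\omega$. Since $\omega^\omega \in \LOrd \setminus \{0\}$ and $\mathrm{lift}_n(\tau(s)) < \omega^\omega$, the defining clause of $H$ at non-zero limit stages yields the auxiliary reduction $k \in H(\mathrm{lift}_n(\tau(s)))$ iff $\mathtt{c}(k, \sharp(\mathrm{lift}_n(\tau(s)))) \in H(\omega^\omega)$. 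Composing everything produces the computable function
\[
f(s)\ :=\ \mathtt{c}\bigl(\mathtt{U}_{h(\sharp\tau(s))}(s),\ \sharp(\mathrm{lift}_n(\tau(s)))\bigr) ,
\]
which many-one reduces $\mathtt{LFP}(\mathscr{D}^{-})$ to $H(\omega^\omega)$, establishing membership in $\Sigma^0_{\omega^\omega}$.

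The main pitfall I foresee is the naive route via the equivalence $s \in \mathtt{LFP}(\mathscr{D}^{-})$ iff $\exists \alpha < \omega^\omega \colon s \in (\mathscr{D}^{-})^\alpha(\varnothing)$, which only places the set in $\Sigma^0_1$ \emph{relative} to $H(\omega^\omega)$, rather than inside $\Sigma^0_{\omega^\omega}$ itself. The computable rank bound $\tau$ extracted from the proof of Theorem~\ref{theo-ubo} is precisely what kills this existential; once that observation is made, the remainder of the argument is a routine composition of computable maps together with a single invocation of the fact that at a non-zero limit $\lambda$, the set $H(\lambda)$ uniformly dominates every $H(\beta)$ with $\beta < \lambda$.
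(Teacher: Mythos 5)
Your overall route is the paper's: kill the existential quantifier over stages by means of the computable rank bound extracted from the proof of Theorem~\ref{theo-ubo}, then apply Lemma~\ref{lemm-ubc} and the clause of $H$ at the limit stage $\omega^{\omega}$ to assemble a single many-one reduction. The only divergence is that you run the argument for $\mathscr{D}^{-}$ itself, with stage $\tau(s) = \omega\cdot\dot{\mu}(s)+1$, whereas the paper passes to $\underline{\mathscr{D}}^{-}$ (which has the same least fixed point by Proposition~\ref{prop-gen}) and uses the stage $\dot{\mu}(s)+1$, for which ($\dag$) yields the needed equivalence directly.

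That choice introduces a genuine, though easily repaired, flaw. The inequality $\mathrm{rank}^{-}(s)\leqslant\omega\cdot\underline{\mathrm{rank}}^{-}(s)$ that you rely on (and which the paper does state in the proof of Theorem~\ref{theo-ubo}) fails when $\underline{\mathrm{rank}}^{-}(s)=0$, i.e.\ when $s$ is derivable without the $\omega$-rule: the right-hand side is then $0$, while $\mathrm{rank}^{-}(s)$ can be any finite number. Concretely, for the star-free sequent $s = (p, q \yields p\cdot q)$ you get $\dot{\mu}(s)=0$ and $\tau(s)=1$, but $(\mathscr{D}^{-})^{1}(\varnothing)$ consists of axioms only, so your central equivalence $s\in\mathtt{LFP}(\mathscr{D}^{-})\Longleftrightarrow s\in(\mathscr{D}^{-})^{\tau(s)}(\varnothing)$ is false and the reduction $f$ misclassifies $s$. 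The correct uniform bound is $\mathrm{rank}^{-}(s)<\omega\cdot\bigl(\underline{\mathrm{rank}}^{-}(s)+1\bigr)$, so it suffices to take $\tau(s):=\omega\cdot\bigl(\dot{\mu}(s)+1\bigr)$, which is still computable and below $\omega^{\omega}$, and Lemma~\ref{lemm-ubc} applies to limit stages just as well; alternatively, do as the paper does and argue for $\underline{\mathscr{D}}^{-}$ at stage $\dot{\mu}(s)+1$, invoking Proposition~\ref{prop-gen} at the end. With either repair, the remaining composition of computable maps and the use of the limit clause of $H(\omega^{\omega})$ are correct and coincide with the paper's proof.
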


\begin{proof}
For convenience, take $S$ to be the collection in question, i.e.\ the least fixed point of $\mathscr{D}^{-}$. By Proposition~\ref{prop-gen}, $S$ coincides with the least fixed point of $\underline{\mathscr{D}}^{-}$. Clearly, we can find a positive $\Sigma^0_2$-for\-mu\-la $\Phi \left( x, X \right)$ such that $[ \Phi ] = \underline{\mathscr{D}}^{-}$. Let $\dot{\mu}$ be as in the proof of Theorem~\ref{theo-ubo}, and $h$ be as in the statement of Lemma~\ref{lemm-ubc}. Observe that for every sequent $s$,
\begin{align*}
s\ \in\ S \quad
&\overset{\ref{theo-ubo}}{\Longleftrightarrow} \quad
s\ \in\ {{\left( \underline{\mathscr{D}}^{-} \right)}^{\omega^{\omega}} \left( \varnothing \right)} \quad\\
&\Longleftrightarrow \quad
s\ \in\ {{\left( \underline{\mathscr{D}}^{-} \right)}^{\dot{\mu} \left( s \right) + 1} \left( \varnothing \right)}\\
&\overset{\ref{lemm-ubc}}{\Longleftrightarrow} \quad
{\mathtt{U}_{h \left( \sharp \left( \dot{\mu} \left( s \right) + 1 \right) \right)} \left( s \right)}\ \in\ {H \left( \mathrm{lift}_2 \left( \dot{\mu} \left( s \right) + 1 \right) \right)}\\[0.6em]
&\Longleftrightarrow \quad
{\mathtt{c} \left( {\mathtt{U}_{h \left( \sharp \left( \dot{\mu} \left( s \right) + 1 
 \right) \right)} \left( s \right)}, {\sharp \left( \mathrm{lift}_2 \left( \dot{\mu} \left( s \right) + 1 \right) \right)} \right)}\ \in\ {H \left( \omega^{\omega} \right)} ,
\end{align*}
where the second equivalence is justified by ($\dag$). Consequently, ${\left( \mathscr{D}^{-} \right)}^{\omega^{\omega}} \left( \varnothing \right) \leqslant H \left( \omega^{\omega} \right)$.
\end{proof}

This together with Theorem~\ref{Th:lower-bound} immediately gives:

\begin{coro}
The collection of all sequents derivable in $\EIALminus$ is $\Sigma^0_{\omega^{\omega}}$-comp\-le\-te, i.e.\ the least fixed point of $\mathscr{D}^{-}$ is many-one equivalent to $H \left( \omega^{\omega} \right)$.
\end{coro}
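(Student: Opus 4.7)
The plan is to combine the preceding theorem with Theorem~\ref{Th:lower-bound}. The preceding theorem already yields one direction of the many-one equivalence: $\mathtt{LFP} \left( \mathscr{D}^- \right) \leqslant H \left( \omega^{\omega} \right)$. So it remains to show the reverse reducibility, $H \left( \omega^{\omega} \right) \leqslant \mathtt{LFP} \left( \mathscr{D}^- \right)$.

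For this lower bound, I would invoke Theorem~\ref{Th:lower-bound}, which supplies a finite set $\Hc$ of $\ast$-free sequents (in fact, monoidal inequations) together with a computable reduction $x \mapsto \left( \Pi_x \yields C_x \right)$ witnessing $H \left( \omega^{\omega} \right) \leqslant {\left\{ s \mid s ~ \text{is derivable in} ~ \IAL ~ \text{from} ~ \Hc \right\}}$. To move from $\IAL$-derivability from $\Hc$ to pure derivability in $\EIALminus$, I would apply the deduction theorem (Theorem~\ref{Th:deduction}): $\Pi_x \yields C_x$ is derivable in $\IAL$ from $\Hc$ if and only if ${!} \Upsilon_{\Hc}, \Pi_x \yields C_x$ is derivable in $\EIAL$. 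The key observation is that, since every sequent in $\Hc$ is $\ast$-free, every formula in $\Upsilon_{\Hc}$ is $\ast$-free, so each ${!}$-formula in ${!} \Upsilon_{\Hc}$ satisfies the restriction defining $\EIALminus$. Therefore the resulting derivation in $\EIAL$ lies inside the fragment $\EIALminus$.

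Composing these two reductions, the map
\[
x\ \longmapsto\ \bigl( {!} \Upsilon_{\Hc}, \Pi_x \yields C_x \bigr)
\]
is a computable many-one reduction of $H \left( \omega^{\omega} \right)$ to $\mathtt{LFP} \left( \mathscr{D}^- \right)$, completing the proof. No essential obstacle arises here, as the argument is a direct concatenation of the preceding upper bound with the hardness result of Section~\ref{S:Tikhon}; the only point worth double-checking is the $\ast$-freeness of the formulae in $\Upsilon_{\Hc}$, which is immediate from the construction in Definition~\ref{Df:construction-hypotheses}.
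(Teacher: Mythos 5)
Your proposal is correct and follows essentially the same route as the paper: the preceding theorem gives $\mathtt{LFP}(\mathscr{D}^-) \leqslant H(\omega^{\omega})$, and the hardness direction is exactly Theorem~\ref{Th:lower-bound} transported into $\EIALminus$ via Theorem~\ref{Th:deduction}, with the observation that the formulae of $\Upsilon_{\Hc}$ are $\ast$-free — which is also how the paper's own proof of Theorem~\ref{Th:lower-bound} already phrases the reduction. The only difference is that the paper leaves this composition implicit ("immediately gives"), whereas you spell it out.
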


Turning to closure ordinals, here is one useful observation:

\begin{lemm} \label{lemm-lbo}
Let $\Phi \left( x, X \right)$ be a positive $\mathcal{L}_2$-for\-mu\-la with no set quantifiers. Suppose that $H \left( \omega^{\omega} \right)$ is many-one~re\-du\-cible to the least fixed point of $[ \Phi ]$. Then the closure ordinal of $[ \Phi ]$ is greater than or equal to $\omega^{\omega}$.
\end{lemm}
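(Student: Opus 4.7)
The plan is to argue by contradiction. Suppose that the closure ordinal of $[\Phi]$ is some $\alpha < \omega^{\omega}$, so that $\mathtt{LFP}([\Phi]) = [\Phi]^{\alpha}(\varnothing)$. Since $\Phi(x,X)$ contains no set quantifiers, it is (equivalent to) a positive $\Sigma^0_n$-formula for some $n \in \omega \setminus \{0\}$, which is exactly the setting in which Lemma~\ref{lemm-ubc} applies. Invoking that lemma yields
\[
{[\Phi]^{\alpha}(\varnothing)}\ \leqslant\ {H(\mathrm{lift}_n(\alpha))} .
\]
A quick inspection of $\mathrm{lift}_n$ shows that $\mathrm{lift}_n(\alpha) < \omega^{\omega}$: writing $\alpha$ in Cantor normal form, we have $\mathrm{base}(\alpha) < \omega^{k}$ for some $k \in \omega$, and hence
\[
{\mathrm{lift}_n(\alpha)}\ =\ {\mathrm{base}(\alpha) + \mathrm{step}(\alpha) \cdot n + 1}\ <\ {\omega^{k+1}}\ <\ {\omega^{\omega}} .
\]

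Set $\beta := \mathrm{lift}_n(\alpha)$, so that $\beta < \omega^{\omega}$. Combining the upper bound above with the hypothesis $H(\omega^{\omega}) \leqslant \mathtt{LFP}([\Phi])$, we obtain $H(\omega^{\omega}) \leqslant H(\beta)$. On the other hand, the definition of $H$ at the limit $\omega^{\omega}$ supplies a trivial many-one reduction $H(\beta+1) \leqslant H(\omega^{\omega})$ via the computable map $m \mapsto \mathtt{c}(m, \sharp(\beta+1))$, using that $\beta+1 < \omega^{\omega}$. Chaining the two reductions produces $H(\beta+1) \leqslant H(\beta)$, i.e.\ $\Sigma^0_{\beta+1} \subseteq \Sigma^0_{\beta}$. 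But this contradicts the first clause of Folklore~\ref{folk-hierarchy}, which asserts $\Sigma^0_{\beta} \subsetneq \Sigma^0_{\beta+1}$ for every $\beta < \omega^{\omega}$.

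No substantive obstacle is anticipated: the argument amounts to invoking Lemma~\ref{lemm-ubc} to upper bound the least fixed point, and then exploiting the built-in strictness of the hyperarithmetical hierarchy below $\omega^{\omega}$ provided by Folklore~\ref{folk-hierarchy}. The only slightly technical point is verifying that $\mathrm{lift}_n$ sends ordinals strictly below $\omega^{\omega}$ to ordinals strictly below $\omega^{\omega}$, but this is immediate from the Cantor normal form, as indicated above.
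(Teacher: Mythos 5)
Your proposal is correct and follows essentially the same route as the paper's proof: assume the closure ordinal $\alpha < \omega^{\omega}$, apply Lemma~\ref{lemm-ubc} to get $[\Phi]^{\alpha}(\varnothing) \leqslant H(\mathrm{lift}_n(\alpha))$, and derive a contradiction with Folklore~\ref{folk-hierarchy}. The only difference is that you spell out explicitly (via the reduction $m \mapsto \mathtt{c}(m, \sharp(\beta+1))$ into the limit stage $H(\omega^{\omega})$ and the resulting collapse $\Sigma^0_{\beta+1} \subseteq \Sigma^0_{\beta}$) the hierarchy contradiction that the paper leaves implicit, which is a correct filling-in of that step.
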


\begin{proof}
Take $\alpha$ to be the closure ordinal of $[ \Phi ]$; so ${[ \Phi ]}^{\alpha} \left( \varnothing \right)$ coincides with the least fixed point of $[ \Phi ]$. Suppose, for the sake of contradiction, that $\alpha < \omega^{\omega}$. Obviously, there exists a non-zero $n \in \omega$ such that $\Phi \left( x, X \right)$ is logically equivalent to some positive $\Sigma^0_n$-for\-mu\-la. Then
\[
{{[ \Phi ]}^{\alpha} \left( \varnothing \right)} \leqslant {H \left( \mathrm{lift}_n \left( \alpha \right) \right)}
\]
by Lemma~\ref{lemm-ubc}. Hence $H \left( \omega^{\omega} \right) \leqslant H \left( \mathrm{lift}_n \left( \alpha \right) \right)$, which contradicts Folklore~\ref{folk-hierarchy}.
\end{proof}

It follows that we cannot reach the closure of $\GEIALminus$ in less than $\omega^{\omega}$ steps:

\begin{coro} \label{coro-closure}
The closure ordinals of $\mathscr{D}^{-}$ and $\underline{\mathscr{D}}^{-}$ are both equal to $\omega^{\omega}$.
\end{coro}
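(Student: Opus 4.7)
The plan is to combine the upper bound from Theorem~\ref{theo-ubo} with the matching lower bound supplied by Lemma~\ref{lemm-lbo}. The upper bound side is already in hand: Theorem~\ref{theo-ubo} yields that both closure ordinals are $\leqslant \omega^{\omega}$, so the task reduces to showing that neither of them can be strictly smaller.

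First I would dispatch $\mathscr{D}^{-}$. The preceding corollary tells us that the least fixed point of $\mathscr{D}^{-}$ is many-one equivalent to $H \left( \omega^{\omega} \right)$; in particular $H \left( \omega^{\omega} \right)$ is many-one reducible to it. Since $\mathscr{D}^{-}$ is elementary (as noted at the end of Section~\ref{S:inductive}), there exists a positive $\mathcal{L}_2$-formula $\Phi \left( x, X \right)$ with no set quantifiers such that $\left[ \Phi \right] = \mathscr{D}^{-}$. Now Lemma~\ref{lemm-lbo} applies directly and gives that the closure ordinal of $\mathscr{D}^{-}$ is at least $\omega^{\omega}$. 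Together with Theorem~\ref{theo-ubo} this forces equality.

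For $\underline{\mathscr{D}}^{-}$ the argument is essentially the same, but one needs to check two easy facts. By Proposition~\ref{prop-gen}, the collections of sequents derivable in $\EIALminus$ and in $\GEIALminus$ coincide, so $\mathtt{LFP} \left( \underline{\mathscr{D}}^{-} \right) = \mathtt{LFP} \left( \mathscr{D}^{-} \right)$, which is again many-one equivalent to $H \left( \omega^{\omega} \right)$. Also, $\underline{\mathscr{D}}^{-}$ is elementary: given a finite set of sequents $S$, the property that $s$ is either in $S$, or an axiom, or derivable from members of $S$ by the $\omega$-rule, or derivable from finitely many members of $S$ in $\EIALminus$ without $({}^{\ast} L_{\omega})$, is arithmetical (in fact, computable relative to the membership predicate for $S$, once we code the finitary derivability relation). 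So Lemma~\ref{lemm-lbo} applies to $\underline{\mathscr{D}}^{-}$ as well, yielding the lower bound $\omega^{\omega}$ and hence equality.

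The only place that requires any care is confirming that $\underline{\mathscr{D}}^{-}$ is representable by a positive $\mathcal{L}_2$-formula with no set quantifiers; but this is routine, since the ``$\mathrm{Fin}$'' rule just encapsulates finitely many cut-free $\EIALminus$-derivation steps without the $\omega$-rule, and this is a computable relation on G\"odel numbers of sequents. No serious obstacle arises, because all the technical work---the $\Sigma^0_{\omega^\omega}$-hardness (Theorem~\ref{Th:lower-bound}), the ordinal bound $(\dag)$, and the key Lemma~\ref{lemm-lbo}---has already been established.
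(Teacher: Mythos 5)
Your argument is correct and follows essentially the same route as the paper: the upper bound is Theorem~\ref{theo-ubo}, the lower bound is Lemma~\ref{lemm-lbo} applied to the reduction $H\left(\omega^{\omega}\right) \leqslant \mathtt{LFP}\left(\mathscr{D}^{-}\right)$ coming from Theorem~\ref{Th:lower-bound}, and Proposition~\ref{prop-gen} identifies the least fixed points of $\mathscr{D}^{-}$ and $\underline{\mathscr{D}}^{-}$. One small caveat: your aside that the $(\mathrm{Fin})$ relation is \emph{computable} is neither needed nor justified for $\EIALminus$ (contraction on ${!}$-formulae is exactly what blocks the decidability argument of Section~\ref{S:Stepan} outside the non-expanding fragment); Lemma~\ref{lemm-lbo} only requires that $\underline{\mathscr{D}}^{-}$ be defined by a positive set-quantifier-free formula, which the obvious existential description of $(\mathrm{Fin})$ already gives (the paper uses a positive $\Sigma^0_2$-formula).
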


\begin{proof}
Take $S$ to be the least fixed point of $\mathscr{D}^{-}$, or equivalently, that of $\underline{\mathscr{D}}^{-}$, by Pro\-po\-si\-ti\-on \ref{prop-gen}. Since $H \left( \omega^{\omega} \right) \leqslant S$ by Theorem~\ref{Th:lower-bound}, the closure ordinals of $\mathscr{D}^{-}$ and $\underline{\mathscr{D}}^{-}$ must be at least $\omega^{\omega}$ by Lemma~\ref{lemm-lbo}. On the other hand, Theorem~\ref{theo-ubo} ensures that they are~not greater than $\omega^{\omega}$.
\end{proof}

Of course, the same applies to the versions of $\mathscr{D}^{-}$ and $\underline{\mathscr{D}}^{-}$ that include the cut rule.

Let us briefly compare the results for $\EIALminus$ (and $\GEIALminus$) with those that are known for the initial system $\IAL$, without ${!}$ and hypotheses. The closure ordinal of the immediate derivability operator of the cut-free version of $\IAL$ is also exactly $\omega^\omega$; see \cite{Pshenitsyn2024MZ}. But it is not~known whether this is precise if we allow the cut rule.\footnote{In
particular, the sequents employed in the lower bound argument of \cite{Pshenitsyn2024MZ} can be reached much faster by using cut.}
Clearly, analogues of the robust argument of Corollary~\ref{coro-closure} (based on~Lemma~\ref{lemm-lbo}) do~not apply to $\IAL$, because the corresponding derivability problem --- unlike that for $\EIALminus$ --- is only $\Pi^0_1$-comp\-le\-te.

%%%

\section{Co-enumerability for a restricted fragment}\label{S:Stepan}

Recall that we consider the following fragments of $\EIALminus$: $\EIALm$, where ${!}$-for\-mu\-lae may be only of a very specific form, namely, ${!}((b_1 \cdot \ldots \cdot b_n) \BS (c_1 \cdot \ldots \cdot c_m))$, and an even more restrictive fragment $\EIALne$, where each such formula should satisfy the inequality $m \leqslant n$. In this section we shall show that this inequality makes a great difference. Namely, while $\EIALm$ is $\Sigma^0_{\omega^\omega}$-hard (Theorem~\ref{Th:lower-bound} above), for $\EIALne$ we shall establish a $\Pi^0_1$ upper complexity bound.

An informal sketch of the argument is as follows. In the previous section, all finite rules of $\EIALminus$ (i.e., all rules except the $\omega$-rule) were merged into one big rule called $(\mathrm{Fin})$, resulting in an auxiliary calculus $\GEIALminus$. Informally, applications of $(\mathrm{Fin})$ and $({}^* L_\omega)$ in a derivation correspond to $\exists$ and $\forall$ quantifiers, and they interleave up to the closure ordinal, which is $\omega^\omega$. This gives the $\Sigma^0_{\omega^\omega}$ upper bound. For the restricted fragment $\EIALne$, we shall show that its dyadic version $\dEIALne$, introduced in Section~\ref{S:dyadic}, has a {\em decidable} $(\mathrm{Fin})$ rule. Namely, for any sequent $s$ there will be a finite number of possible sets $\{ s_1, \ldots, s_m \}$ such that $s$ is derivable from $s_1, \ldots, s_m$ without using $({}^* L_\omega)_{\mathrm{d}}$. Thus, the only rule which increases the complexity will be $({}^* L_\omega)_{\mathrm{d}}$, and we shall show that $\omega^\omega$ iterations of that still result in $\Pi^0_1$.

A similar argument was used in~\cite{KuznetsovSperanski2022} to establish a $\Pi^0_1$ upper bound for $\IAL$ with a family of subexponentials which could allow weakening and permutation, but not contraction. Moreover, we shall use the core statement~\cite[Proposition 5.14]{KuznetsovSperanski2022} ``out-of-the-box.'' In the case of $\EIALne$, however, this argument cannot be applied to the calculus in its original formulation. Namely, the rule $(\mathrm{Fin})$ obtained from $\EIALne$ does not satisfy the decidability property formulated above. Indeed, if the antecedent of $s$ contains a ${!}$-for\-mu\-la, then one may apply $({!}C)$ any number of times, yielding an infinite number of ways to obtain $s$ by application of $(\mathrm{Fin})$. In order to resolve this issue, we use the dyadic system $\dEIALne$.

By $(\mathrm{Fin})_{\mathrm{d}}$ let us denote the version of the $(\mathrm{Fin})$ rule for the dyadic system $\dEIALne$: $s$ is derivable from $s_1, \ldots, s_m$ by $(\mathrm{Fin})_{\mathrm{d}}$ if{f} $s$ is derivable from $s_1, \ldots, s_m$ in $\dEIALne$ without using $({}^*L_\omega)_{\mathrm{d}}$. Notice that such a rule could also be formulated for $\dEIALm$, but it will not have the necessary decidability properties. The calculus with two rules, $(\mathrm{Fin})_{\mathrm{d}}$ and $({}^*L_\omega)_{\mathrm{d}}$, is denoted by $\GdEIALne$, and it is obviously equivalent to $\dEIALne$, and therefore to $\EIALne$.

The function $\mu$ (see proof of Theorem~\ref{theo-ubo}) is extended to d-sequents in the following way:
\[
{\mu({!}\Xi; A_1, \ldots, A_n \yields B)}\ :=\
{\mu(A_1) \ncplus \ldots \ncplus \mu(A_n) \ncplus \mu(B)} .
\]
(It ignores the $!$-zone, since the ${!}$-zone does not include ${}^*$.)
By $\underline{\mathrm{rank}}_{\mathrm{d}}^{\mathrm{ne}}(s)$ we denote the rank of the sequent $s$ w.r.t.\ the immediate derivability operator of $\GdEIALne$ (in its cut-free version). Naturally, the function $\mu$ and rank are connected by the following inequation:
\begin{equation*}
{\underline{\mathrm{rank}}_{\mathrm{d}}^{\mathrm{ne}}(s)}\ \leqslant\ \dot{\mu}(s).
\tag{$\dag_{\mathrm{d}}$}
\end{equation*}
The argument is the same as that for Theorem~\ref{theo-ubo}, since the rules operating ${}^*$ are basically the same. In particular, the closure ordinal for $\GdEIALne$ is also bounded by $\omega^\omega$.

Now, following~\cite{KuznetsovSperanski2022}, we fix effective G\"{o}del  numberings for:
\begin{itemize}
\item d-sequents, denoted by $\#(s)$;
\item finite sequences of d-sequents, denoted by $\#_*(s_1, \ldots, s_m)$;
\item infinite sequences of d-sequents of the form
\[
s_0 = {!}\Xi; \Gamma, \Delta \yields C, \quad
s_1 = {!}\Xi; \Gamma, A, \Delta \yields C,
\quad s_2 = {!}\Xi; \Gamma, A^2, \Delta \yields C, \quad \ldots,
\]
denoted by $\#_{\bullet}(s_0, s_1, s_2, \ldots)$. Such infinite sequences are sequences of premises for $({}^*L)_\mathrm{d}$.
\end{itemize}
Next, let us define two mappings:
\begin{align*}
{R_*(\# s)}\ &:=\
{\left\{ {\#_*(s_1, \ldots, s_m)} \mid \mbox{$s$ is obtained from $s_1, \ldots, s_m$ by $(\mathrm{Fin})_{\mathrm{d}}$} \right\}} , \\
{R_{\bullet}(\#s)}\ &:=\
{\left\{ {\#_{\bullet} (s_0, s_1, s_2, \ldots)} \mid \mbox{$s$ is obtained from $s_0, s_1, s_2, \ldots$ by  $({}^* L_\omega)_{\mathrm{d}}$} \right\}} . 
\end{align*}

\begin{lemm}
For each d-sequent $s$, the sets $R_*(\#s)$ 
and $R_{\bullet}(\#s)$ 
are finite and computable uniformly in $\#s$.
\end{lemm}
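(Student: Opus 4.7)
I will handle $R_{\bullet}(\#s)$ and $R_*(\#s)$ via distinct arguments.

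For $R_{\bullet}(\#s)$: an application of $({}^\ast L_\omega)_{\mathrm{d}}$ with conclusion $s$ is parameterized entirely by the choice of a principal occurrence of some starred subformula $A^\ast$ in the antecedent of $s$. Once the decomposition $s = {!}\Xi; \Gamma, A^\ast, \Delta \yields C$ is fixed, the premise sequence is rigidly $({!}\Xi; \Gamma, A^n, \Delta \yields C)_{n \in \omega}$, and its $\#_\bullet$-code is uniformly computable from the chosen occurrence and $\#s$. Since the antecedent of $s$ contains at most $|s|$ occurrences of starred formulas, $R_{\bullet}(\#s)$ is finite and $\#s$-computable.

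For $R_*(\#s)$: by Proposition~\ref{dyadic-cut-elim} I may restrict to cut-free derivations. The first step is to bound backward proof search to a finite $\#s$-computable universe $\mathcal{U}(s)$ of d-sequents. Define $N(t) := |\Pi| + |D|$ for $t = {!}\Xi; \Pi \yields D$, counting symbol occurrences. Every rule of $\dEIALne$ other than $({}^\ast L_\omega)_{\mathrm{d}}$ and $(A)_{\mathrm{d}}$ strictly decreases $N$ when read bottom-up: each such rule either decomposes a compound formula into strictly smaller subformulas, or (in the case of $({!}L)_{\mathrm{d}}$) moves ${!}A$ out of the antecedent into the ${!}$-zone. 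The rule $(A)_{\mathrm{d}}$ replaces $n$ variable occurrences in the antecedent by $m \leqslant n$ variable occurrences, where the inequality $m \leqslant n$ is precisely the ne restriction and ensures $N$-non-expansion. Moreover, all variables, connectives, and ${!}$-formulas appearing in any backward-reachable d-sequent come from the finite pool of (sub)formulas of $s$, and every reachable ${!}$-zone is a subset of the ${!}$-subformulas of $s$. Together with $N(t) \leqslant N(s)$, this makes $\mathcal{U}(s)$ finite and $\#s$-computable.

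With $\mathcal{U}(s)$ in hand, every non-axiom leaf of every $\omega$-rule-free proof tree of $s$ lies in $\mathcal{U}(s)$. Moreover, any such tree can be pruned --- by excising the segment between two occurrences of the same d-sequent on any root-to-leaf path --- to a loop-free tree of depth at most $|\mathcal{U}(s)|$ and branching factor bounded by some $B$ computable from $\#s$, yielding at most $B^{|\mathcal{U}(s)|}$ many leaves. Hence the set of candidate tuples $(s_1, \ldots, s_m)$ with entries from $\mathcal{U}(s)$ and length at most $B^{|\mathcal{U}(s)|}$ is finite and $\#s$-computable, and membership in $R_*(\#s)$ reduces to the decidable predicate \emph{``$s$ is derivable from $\{s_1, \ldots, s_m\}$ in $\dEIALne$ without $({}^\ast L_\omega)_{\mathrm{d}}$''} (decidable because proof search is confined to the finite $\mathcal{U}(s)$). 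Filtering yields $R_*(\#s)$.

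The main technical obstacle is the case analysis for $(A)_{\mathrm{d}}$: without the ne restriction the antecedent could grow unboundedly (for example, an application keyed to ${!}(x \BS xy)$ would replace a single $x$ by $xy$, and further hypotheses could extend the antecedent arbitrarily), so $\mathcal{U}(s)$ would cease to be finite. A secondary subtlety is the correctness of the pruning argument: each pruning step turns a valid derivation of $s$ into another valid derivation of $s$ with a sub-tuple of the original leaf-tuple, so one must verify that the tuple-length bound $B^{|\mathcal{U}(s)|}$ holds for every relevant derivation (not just for loop-free ones) --- the justification being that any tuple in $R_*(\#s)$ has entries in the finite set $\mathcal{U}(s)$ and that the contribution of any loop-created branches is itself bounded by the leaves of a loop-free sub-derivation, yielding the same combinatorial bound.
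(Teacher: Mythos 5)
Your proposal is correct and follows essentially the same route as the paper: a complexity parameter on d-sequents that ignores the ${!}$-zone, shown to be non-increasing under backward application of every rule except the $\omega$-rule (with $(A)_{\mathrm{d}}$ as the crucial case, where the non-expanding condition $m \leqslant n$ is exactly what is needed), combined with the subformula property for ${!}$-zones to confine proof search to a finite, computable universe, and loop-elimination on branches to make the search terminate. The paper dismisses $R_{\bullet}(\#s)$ as evident and states the non-increase of the parameter rather than strict decrease, but these are only presentational differences.
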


\begin{proof}
For $R_{\bullet}(\#s)$, the statement is evident. Let us prove the statement for $R_{*}(\#s)$.

Recall that the complexity of a formula~$A$, denoted by $|A|$, is defined as the total number of variable, constant, and connective occurrences in $A$.
For each d-sequent we define a parameter $c = c({!}\Xi; \Gamma \yields C)$ as $|\Gamma| + |C|$, where $|A|$ is the complexity of $A$ and $|\Gamma|$ is the sum of complexities of formulae in $\Gamma$. Notice that $!$-for\-mu\-lae in ${!}\Xi$ do not count.
For each rule of $\dEIALne$, except $({}^* L_\omega)_{\mathrm{d}}$, the $c$ parameter for each of its premises is not greater than the $c$ parameter of the conclusion. The crucial case here is the $(A)_{\mathrm{d}}$ rule. Recall its formulation:
\[
\infer[(A)_{\mathrm{d}}]
{\{{!}((b_1 \cdot \ldots \cdot b_n) \BS (c_1 \cdot \ldots \cdot c_m)) \} \cup {!}\Xi; \Gamma, b_1, \ldots, b_n, \Delta \yields C}
{\{{!}((b_1 \cdot \ldots \cdot b_n) \BS (c_1 \cdot \ldots \cdot c_m)) \} \cup  {!}\Xi; \Gamma, c_1, \ldots, c_m, \Delta \yields C} 
\]
Here for ${!}((b_1 \cdot \ldots \cdot b_n) \BS (c_1 \cdot \ldots \cdot c_m))$ we have $m \leqslant n$, which yields exactly the necessary inequation 
$|c_1, \ldots, c_m| \leqslant |b_1, \ldots, b_n|$. For the system $\dEIALm$, without the $m \leqslant n$ restriction, this argument would have failed.

Next, we use subformula properties of a cut-free derivation in $\dEIALne$. First, in such a derivation, all formulae occurring in ${!}$-zones are subformulae (actually, ${!}$-subformulae) of the goal d-sequent. Second, each variable inside such a derivation should also occur in the goal sequent. The part outside the $!$-zone is also bounded by the $c$ parameter, thus, there is only a finite  choice of d-sequents which may occur in the derivation. Moreover, this choice is computably bounded w.r.t.\ the goal sequent $s$.

Thus, the set $R_*(\#s)$ is finite. The algorithm for computing it from $\#s$ is exhaustive proof search. Here we use the fact that the height of proof tree is bounded by the size of $R_*(\#s)$ (which is computable): if a d-sequent appears in a derivation branch twice, we may abandon this line of proof search.
\end{proof}

\begin{prop}\label{prop-pi01}
There exists a computable function $g$ from $\mathscr{N}$ to $\omega$ such that, given $\alpha \in \mathscr{N}$, it returns the G\"{o}del number of a $\Pi^0_1$-for\-mu\-la defining the set
\[
\{ \#s \mid \dot{\mu}(s) = \alpha \mbox{ and $s$ is derivable in $\GdEIALne$ } \}
\]
in the standard model $\mathfrak{N}$ of arithmetic.
\end{prop}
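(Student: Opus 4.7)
The plan is to mimic \cite[Proposition~5.14]{KuznetsovSperanski2022}, constructing $g$ by effective transfinite recursion along the computable well-order $\prec$ on $\Ns$. First I would isolate the three structural ingredients that the construction needs: (i) that $R_*(\#s)$ and $R_{\bullet}(\#s)$ are finite and computable uniformly in $\#s$, which is precisely the preceding lemma; (ii) that every $({}^*L_\omega)_{\mathrm{d}}$-application has each premise strictly $\dot\mu$-below its conclusion, which is immediate from $\mu(A^n) \prec \mu(A^*)$ in $\Ns$; and (iii) that every $(\mathrm{Fin})_{\mathrm{d}}$-application has premises of $\dot\mu$ at most that of the conclusion, where the non-expansion condition $m \leqslant n$ on $(A)_{\mathrm{d}}$ is what keeps the $c$-parameter bounded and hence keeps the relevant fragments of $R_*$ finite.

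Given (i)--(iii), any cut-free derivation of $s$ with $\dot\mu(s) = \alpha$ decomposes canonically into a maximal $(\mathrm{Fin})_{\mathrm{d}}$-subtree rooted at $s$, whose leaves are axioms or immediate conclusions of $({}^*L_\omega)_{\mathrm{d}}$-applications. This yields the local recurrence: $\#s$ belongs to the target set iff $\dot\mu(s) = \alpha$ and some $(s_1,\ldots,s_m) \in R_*(\#s)$ satisfies that each $s_i$ is either an axiom or admits some $r \in R_{\bullet}(\#s_i)$ whose every component $r_n$ is derivable, with $\dot\mu(r_n) < \alpha$ guaranteed by (ii)--(iii). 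I would then formalise this as a template $\Psi_\alpha(x)$, routing each occurrence of ``$r_n$ is derivable'' through the universal, uniformly $\Pi^0_1$ satisfaction predicate $\mathrm{Sat}_{\Pi^0_1}(e,y)$ at $e := g(\dot\mu(r_n))$ and $y := \#r_n$. Since the only unbounded quantifier is the $\forall n$ inherited from the $\omega$-rule, and bounded existentials over the finite sets $R_*$ and $R_{\bullet}$ distribute through $\Pi^0_1$ bodies, $\Psi_\alpha$ is itself $\Pi^0_1$; its G\"odel number is assembled from $\sharp\alpha$ and from a name for $g$ by a primitive-recursive procedure.

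The main obstacle is the apparent circularity: the template for level $\alpha$ must refer to the formulas for the (possibly infinitely many) lower levels $\beta \prec \alpha$, and at a limit $\alpha$ one cannot unfold all of them in advance. I would resolve this exactly in the style of \cite{KuznetsovSperanski2022}, by never mentioning the lower-level formulas directly: the template carries a symbol for $g$ and defers every lower-level question to $\mathrm{Sat}_{\Pi^0_1}(g(\cdot),\cdot)$. Kleene's second recursion theorem then delivers a single index $e^*$ for which $\mathtt{U}_{e^*}$ computes the map $\alpha \mapsto \#\Psi_\alpha$ while $\Psi_\alpha$ is built from $\mathtt{U}_{e^*}$ itself; this $\mathtt{U}_{e^*}$ is the desired $g$. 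A short transfinite induction on $\alpha$, with (ii) driving the recursion, confirms that $\Psi_\alpha$ indeed defines $\{\#s \mid \dot\mu(s) = \alpha \text{ and } s \text{ is derivable in } \GdEIALne\}$ in $\mathfrak{N}$.
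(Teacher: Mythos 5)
Your proposal is correct and takes essentially the same route as the paper, whose proof of Proposition~\ref{prop-pi01} simply invokes \cite[Proposition~5.14]{KuznetsovSperanski2022}: you have reconstructed precisely that argument (finiteness and uniform computability of $R_*$ and $R_{\bullet}$, strict $\dot{\mu}$-descent at $({}^*L_\omega)_{\mathrm{d}}$, non-increase at $(\mathrm{Fin})_{\mathrm{d}}$, a universal $\Pi^0_1$ satisfaction predicate, and effective transfinite recursion via the recursion theorem). One presentational nitpick: the finiteness of $R_*$ rests on the $m \leqslant n$ restriction via the preceding lemma (your ingredient (i)), not on the $\dot{\mu}$-monotonicity you group it with in (iii), but this does not affect the argument.
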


\begin{proof}
Exactly as that of~\cite[Proposition~5.14]{KuznetsovSperanski2022}.
\end{proof}

\begin{theo}\label{theo-pi01}
The collection of all sequents derivable in $\EIALne$ is $\Pi^0_1$-comp\-le\-te.
\end{theo}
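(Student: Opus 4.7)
The $\Pi^0_1$-hardness is inherited directly: the equational theory of $\ast$-continuous action lattices is already $\Pi^0_1$-complete (by Buszkowski~\cite{Buszkowski2007} and Palka~\cite{Palka2007}), and since equational derivations involve no hypotheses at all, they embed into $\EIALne$ trivially. Alternatively, using Theorem~\ref{Th:deduction}, any commutativity condition $x \mconj y = y \mconj x$ can be encoded as two non-expanding ${!}$-formulae, so the $\Pi^0_1$-hardness of reasoning from commutativity conditions over $\ast$-continuous Kleene algebras transfers to $\EIALne$ as well.

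For the upper bound, my plan is to assemble the dyadic machinery of Section~\ref{S:dyadic} with the iterated $\Pi^0_1$-characterisation of Proposition~\ref{prop-pi01}. First, by the $\mathrm{ne}$-analogue of Proposition~\ref{prop:dyadic-equivalence} (whose proof transfers verbatim, since the restriction $m \leqslant n$ is preserved by all the cut-elimination and rule-simulation transformations), derivability in $\EIALne$ coincides with derivability in $\dEIALne$, and hence in $\GdEIALne$. Next, the inequality $(\dag_{\mathrm{d}})$ bounds the $\GdEIALne$-rank of any derivable d-sequent $s$ by the ordinal $\dot{\mu}(s) < \omega^{\omega}$, which is computable from $\#s$.

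The reduction then proceeds uniformly. Given a d-sequent $s$, compute $\alpha := \dot{\mu}(s) \in \Ns$, and then compute $g(\alpha)$ using Proposition~\ref{prop-pi01} to obtain the G\"odel number of a $\Pi^0_1$-formula $\phi_{\alpha}(x)$ defining the set $\{\#s' \mid \dot{\mu}(s') = \alpha \text{ and } s' \text{ is derivable in } \GdEIALne\}$. By $(\dag_{\mathrm{d}})$ together with monotonicity of the immediate derivability operator, $s$ is derivable iff $\mathfrak{N} \models \phi_{\alpha}(\#s)$. Composing the computable procedure $s \mapsto g(\dot{\mu}(s))$ with a universal $\Pi^0_1$-formula, we obtain a single $\Pi^0_1$-definition of the derivability problem for $\GdEIALne$, and hence for $\EIALne$.

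The intellectual obstacle has in fact already been surmounted by the preceding lemma and Proposition~\ref{prop-pi01}: namely, ensuring that the complexity of the formulae $\phi_{\alpha}$ remains at the $\Pi^0_1$-level as $\alpha$ ranges over $\Ns$, rather than climbing the arithmetical hierarchy. This is precisely where the non-expansion restriction $m \leqslant n$ on ${!}$-formulae becomes essential: it guarantees that the rule $(A)_{\mathrm{d}}$ cannot increase the complexity parameter $c(\cdot)$, which in turn makes the set $R_{\ast}(\#s)$ of premise-sets for $(\mathrm{Fin})_{\mathrm{d}}$ finite and computable from $\#s$; without this, the successor step in the construction of $\phi_{\alpha}$ would destroy the $\Pi^0_1$-bound. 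Given the preparatory work, the remainder of the proof is a routine synthesis.
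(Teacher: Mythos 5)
Your proposal is correct and follows essentially the same route as the paper: the lower bound is inherited from the $\Pi^0_1$-hardness of pure $\IAL$ (Buszkowski), and the upper bound passes through $\GdEIALne$, computes $\alpha=\dot{\mu}(s)$, and evaluates the $\Pi^0_1$-formula with G\"odel number $g(\dot{\mu}(s))$ at $\#s$ via a universal $\Pi^0_1$ satisfaction predicate --- exactly the paper's $\Pi^0_1\mbox{-SAT}(x, g(\dot{\mu}(x)))$ construction. The appeal to $(\dag_{\mathrm{d}})$ at the final step is superfluous (it is already absorbed into Proposition~\ref{prop-pi01}), but this does not affect correctness.
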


\begin{proof}
Since $\EIALne$ is equivalent to $\GdEIALne$, we shall prove the upper bound for the latter. The argument comes from~\cite[Corollary 5.15]{KuznetsovSperanski2022}. Let $g$ be the function given by Proposition~\ref{prop-pi01}. We can define the set of G\"odel numbers of all d-sequents derivable in $\GdEIALne$ by $\Pi^0_1\mbox{-SAT}(x, g(\dot{\mu}(x)))$, where $\Pi^0_1\mbox{-SAT}$ is the standard $\Pi^0_1$-formula expressing satisfiability for $\Pi^0_1$-formulae with exactly one free variable. 

The complexity lower bound follows from that for $\IAL$, i.e.\ infinitary action logic without exponentials, proved by Buszkowski~\cite{Buszkowski2007}.
\end{proof}

\begin{coro}
The following problem is $\Pi^0_1$-comp\-le\-te: given a finite set $S$ of commutativity conditions and an equation $A = B$ in the language of action lattices, decide whether $S$ logically implies $A = B$ in all $*$-con\-ti\-nu\-ous action lattices.
\end{coro}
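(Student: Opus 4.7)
My plan is to derive the corollary as a direct consequence of Theorems~\ref{theo-strong-completeness}, \ref{Th:deduction}, and \ref{theo-pi01}, after observing that commutativity conditions translate precisely into non-expanding hypotheses.

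First I would apply Theorem~\ref{theo-strong-completeness} to rewrite semantic entailment as derivability in $\IAL$ from a set of hypotheses. A commutativity condition $x \mconj y = y \mconj x$ splits into the two monoidal inequations $x, y \yields y \mconj x$ and $y, x \yields x \mconj y$. Call the resulting set $S'$. The crucial feature is that both sequents are non-expanding: each side has length two, so $m \leqslant n$ holds with $m = n = 2$. Next I would invoke Theorem~\ref{Th:deduction} to reduce derivability from $S'$ in $\IAL$ to derivability in $\EIAL$ of the sequents ${!}\Upsilon_{S'}, A \yields B$ and ${!}\Upsilon_{S'}, B \yields A$, where every member of $\Upsilon_{S'}$ has the form $(x \mconj y) \BS (y \mconj x)$. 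Such ${!}$-formulae fall inside the restricted class defining $\EIALne$. Finally, applying Theorem~\ref{theo-pi01} gives a $\Pi^0_1$ upper bound for each of the two derivability queries, and the whole decision problem --- being the conjunction of them --- stays within $\Pi^0_1$.

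For the matching lower bound I would take $S = \varnothing$ and inherit the $\Pi^0_1$-hardness of the equational theory of $\ast$-continuous action lattices established by Buszkowski~\cite{Buszkowski2007} and Palka~\cite{Palka2007}. Alternatively, one can cite the $\Pi^0_1$-hardness of reasoning from commutativity conditions for $\ast$-continuous Kleene algebras recalled in the introduction, which transfers to the residuated setting since action lattices extend Kleene algebras.

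The only subtle point, and essentially the whole content of the argument, is noting that a commutativity condition, once split into two directed inequations, is non-expanding --- if one forgot this and worked with general monoidal inequations, Theorem~\ref{Th:lower-bound} would push the complexity up to $\Sigma^0_{\omega^\omega}$. Beyond this observation, the argument is pure bookkeeping over the already-established deduction theorem and the upper bound for $\EIALne$.
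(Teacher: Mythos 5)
Your proposal is correct and follows essentially the same route as the paper: the upper bound is obtained by splitting each commutativity condition into two non-expanding monoidal inequations, passing through the deduction theorem (Theorem~\ref{Th:deduction}) into $\EIALne$, and invoking Theorem~\ref{theo-pi01}, while the lower bound is inherited from Kozen's $\Pi^0_1$-hardness for commutativity conditions over $\ast$-continuous Kleene algebras (your alternative via the equational theory of $\ast$-continuous action lattices with $S=\varnothing$ is also fine and matches a remark in the introduction). No gaps to report.
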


\begin{proof}
The upper bound follows from Theorem~\ref{theo-pi01} via Theorem~\ref{Th:deduction}. The lower bound follows from that for reasoning from commutativity conditions on $*$-con\-ti\-nu\-ous Kleene algebras; see~\cite{Kozen2002}.
\end{proof}

Thus we have shown that reasoning from commutativity conditions in $*$-con\-ti\-nu\-ous action lattices is~not harder than the corresponding problem for $*$-con\-ti\-nu\-ous Kleene algebras.

%[ Stepan Kuznetsov ]

%%%

\subsection*{Funding} %Acknowledgements}

This work was supported by the Russian Science Foundation under grant no.~23-11-00104, \href{https://rscf.ru/en/project/23-11-00104/}{https://rscf.ru/en/project/23-11-00104/}.

%%%

%\newpage
\bibliographystyle{asl}
\bibliography{act}

\addcontentsline{toc}{section}{References} %!!!

%%%

%%%

%\vspace{2em}
%\noindent
%{\small
%{\scshape Stepan L.\ Kuznetsov, \enskip Tikhon \ Pshenitsyn, %\enskip Stanislav O.\ Speranski}\\[0.5em]
%Steklov Mathematical Institute of Russian Academy of Sciences\\
%8 Gubkina St., 119991 Moscow, Russia\\[0.5em]
%{\ttfamily sk@mi-ras.ru}, \enskip {\ttfamily tpshenitsyn@mi-%ras.ru}, \enskip {\ttfamily katze.tail@gmail.com}
%}

%%%

%\newpage
%\appendix
\subsection*{Appendix: Proof of Proposition 4.3}
%\label{appendix}

%

%\addcontentsline{toc}{section}{Appendix} %Proof of Proposition 4.3 %!!!

%

Let us prove Proposition \ref{prop:hardness-T}. Given an index $i = \langle \varepsilon, \sharp \alpha, e \rangle$, let $\ips(i)$ denote the infinitary propositional sentence corresponding to $i$. Then, Definition \ref{Df:infinitary_propositional_sentences} can be represented as follows for each non-zero $\alpha \in \omega^{\omega}$:
\begin{align*}
{\ips(\langle 0, \sharp \alpha, e \rangle)}\
&=\
\bigdoublevee\limits_{\substack{\beta < \alpha \\ \langle 1, \sharp \beta, e^\prime \rangle \in \mathtt{W}_e}}
\ips(\langle 1, \sharp \beta, e^\prime \rangle) ;\\
{\ips(\langle 1, \sharp \alpha, e \rangle)}\
&=\
\bigdoublewedge\limits_{\substack{\beta < \alpha \\ \langle 1, \sharp \beta, e^\prime \rangle \in \mathtt{W}_{e}}}
\ips(\langle 0, \sharp \beta, e^\prime \rangle).
\end{align*}
Here $\mathtt{W}_e$ denotes $\dom \UCF_{e}$. Besides, $\ips(\langle \varepsilon, \sharp 0, 0 \rangle) = \bot$ and $\ips(\langle \varepsilon, \sharp 0, e \rangle) = \top$ where $\varepsilon \in \{0,1\}$ and $e > 0$.
We write $\omega_+$ for $\omega \setminus \{ 0 \}$. Note that $\omega_+ = \left\{ \sharp \alpha \mid \alpha \in \omega^\omega\right\}$. 

A computable function $f: \{\sharp \beta \mid \beta \le \alpha\} \to \omega$ is called {\em $\alpha$-app\-ro\-pria\-te} if, for each $\beta \le \alpha$:
    \begin{enumerate}
        \item $\UCF_{f\left(\sharp \beta\right)}$ is a total function;
        \item\label{item-appropriate-2} for each $m \in \omega$, $\UCF_{f\left(\sharp \beta\right)}(m)$ is a $\Sigma_{\alpha}$-index and belongs to $\mathbf{T}$ if and only if $\mathtt{c} \left( m, {\sharp \alpha} \right)$ belongs to $H \left( \omega^\omega \right)$ (equivalently, $m \in H \left( \alpha \right)$).
    \end{enumerate}
We shall define a computable function $f:\omega_+ \to \omega$ such that, for each $\alpha < \omega^\omega$, the restriction of $f$ to $\{\sharp \beta \mid \beta \le \alpha\}$ is $\alpha$-app\-ro\-pria\-te. Then, clearly, using it, one can reduce $H \left( \omega^{\omega} \right)$ to $\mathbf{T}$. Such a function $f$ will be constructed using effective transfinite recursion \cite[Section I.4.1]{Montalban2022}. According to this method, for any computable function $g$ of two arguments, there exists $e \in \omega$ such that $\UCF_{e}\left( \sharp \alpha \right) = g(\sharp \alpha ,e{\restriction_{<\alpha}})$. Here $e{\restriction_{<\alpha}}$ is the index of the following partial function:
    \[
    {\UCF (e{\restriction_{<\alpha}},n)}\
    =\
    \begin{cases}
        \UCF(e,\sharp \beta) & {n = {\sharp \beta}} \text{ and } {\beta<\alpha}\\
        \text{undefined} &\text{otherwise} .
    \end{cases}
    \]
    (Note that there are many indices corresponding to the same function. Here we assume that the index $e{\restriction_{<\alpha}}$ is obtained from $e$ and $\sharp \alpha$ in a fixed way by straightforwardly altering the Turing machine defined by $e$.)
    
    Let us define $g$ in such a way that the function $f := \UCF_{e}$ obtained from it by effective transfinite recursion is a desirable one. First, let $g\left(\sharp 0, l \right)$ be an index of the following constant function: $\UCF\left(g\left(\sharp 0, l \right),m \right) = \langle 0, \sharp 0, 0 \rangle$. This is a $\Sigma_0$-index corresponding to the sentence $\bot$. Since $H \left( 0 \right) = \varnothing$, this agrees with the property \ref{item-appropriate-2} of $0$-app\-ro\-pria\-te functions.

    Secondly, let $\alpha = \beta+1$. Recall that
    \begin{align*}
    {\mathtt{c}\left(m,\sharp \alpha \right)}\ \in\ {H \left( \omega^{\omega} \right)}
    \enskip &\Longleftrightarrow \enskip
    m\ \in\ {H \left( \alpha \right) = \mathtt{J} \left( H \left( \beta \right) \right)}
    \\ &\Longleftrightarrow \enskip
    {\UCF^{H \left( \beta \right)} (m,m)} \enskip \text{converges} .
    \end{align*}
    If $\UCF^{H \left( \beta \right)}(m,m)$ converges, then there exists $s \in \omega$ such that the computation of $\UCF^{H \left( \beta \right)}(m,m)$ halts after at most $s$ steps and such that, during the computation, the oracle says whether $i \in H \left( \beta \right)$ only for $i < s$. Let us say that, if $\sigma = (\sigma(0),\ldots, \sigma(s-1)) \in 2^{<\omega}$ is a finite sequence representing the initial part of an oracle, then $\UCF^{\sigma}(e,n)$ converges if and only if, for $S = \left\{ i<s \mid \sigma(i) = 1\right\}$, the computation of $\UCF^{S}(e,n)$ halts after at most $s$ steps and the oracle is asked whether $i \in H \left( \beta \right)$ only if $i < s$. This fact is denoted by $\UCF^{\sigma}(e,n) \downarrow$.

    To define $g\left( \sharp \alpha, l \right)$ imagine that $l$ is an index of a $\beta$-app\-ro\-pria\-te function $\UCF_l$. Then, for each $m \in \omega$, the sentence $\ips(\UCF\left( g\left( \sharp \alpha , l \right), m \right))$ must be logically equivalent to the following one:
    \begin{equation*}\label{eqn:appropriate-function-successor}
    \bigdoublevee\limits_{\substack{\sigma \in 2^{<\omega} \\ \UCF^{\sigma}(m,m)\downarrow}}
    \left(
    \bigdoublewedge\limits_{\substack{i < \vert \sigma \vert \\ \sigma(i) = 1}}
    \ips(\UCF\left( \UCF\left(l,\sharp \beta\right), i \right))
    \wedge
    \bigdoublewedge\limits_{\substack{i < \vert \sigma \vert \\ \sigma(i) = 0}}
    \neg\ips(\UCF\left( \UCF\left(l,\sharp \beta\right), i \right))
    \right) . \tag{$\ddag$}
    \end{equation*}
    According to the definition of a $\beta$-app\-ro\-pria\-te function, $\UCF\left( \UCF\left(l,\sharp \beta\right), i \right)$ is a $\Sigma_\beta$-index which belongs to $\mathbf{T}$ if and only if $i \in H \left( \beta \right)$. Therefore, the sentence (\ref{eqn:appropriate-function-successor}) is true if and only if there exists a finite sequence $\sigma$ such that $\UCF^{\sigma}(m,m)\downarrow$ and such that $\sigma(i)=1$ if and only if $i \in H \left( \beta \right)$. This is equivalent to the fact that $\UCF^{H \left( \beta \right)}(m,m)$ converges and thus to the fact that $\mathtt{c}\left(m,\sharp \alpha \right) \in H \left( \omega^{\omega} \right)$.

    It remains to explain how to define $g\left( \sharp \alpha , l \right)$ in a computable way so that $\UCF\left( g\left( \sharp \alpha , l \right), m \right)$ is a $\Sigma_\alpha$-index. The argument exploits the following basic facts about computable infinitary sentences (most of them are proved by effective transfinite recursion).
    \begin{enumerate}
        \item\label{item-cif-basic-1} Given a $\Sigma_\beta$-index of an infinitary propositional sentence, one can effectively find a $\Pi_\beta$-index of a sentence logically equivalent to its negation.
        \item\label{item-cif-basic-2} Given a finite set of $\Sigma_\beta$-indices/$\Pi_\beta$-indices of sentences, one can effectively find a $\Sigma_\beta$-index/$\Pi_\beta$-index of a sentence logically equivalent to the conjunction of these sentences.
        \item\label{item-cif-basic-3} Given a $\Sigma_\beta$-index and a $\Pi_\beta$-index, one can effectively find the $\Sigma_{\beta+1}$-index of a sentence logically equivalent to the conjunction of the sentences corresponding to these two indices.
        \item\label{item-cif-basic-4} One can effectively find the $\Sigma_\alpha$-index of a sentence logically equivalent to computably enumerable disjunction of sentences with $\Sigma_\alpha$-indices.
        \item\label{item-cif-basic-5} Given a $\Sigma_\beta$-index, one can effectively transform it into a $\Sigma_\alpha$-index for each $\alpha > \beta$.
    \end{enumerate}
    
     Therefore, according to the properties \ref{item-cif-basic-1}, \ref{item-cif-basic-2} and \ref{item-cif-basic-3}, given $l$ and $\sharp \beta$, one can compute the $\Sigma_\alpha$-index of a sentence logically equivalent to
    \[
        \bigdoublewedge\limits_{\substack{i < \vert \sigma \vert \\ \sigma(i) = 1}}
    \ips(\UCF\left( \UCF\left(l,\sharp \beta\right), i \right))
    \wedge
    \bigdoublewedge\limits_{\substack{i < \vert \sigma \vert \\ \sigma(i) = 0}}
    \neg\ips(\UCF\left( \UCF\left(l,\sharp \beta\right), i \right)),
    \]
    and then, using property \ref{item-cif-basic-4}, effectively find a $\Sigma_\alpha$-index of a sentence logically equivalent to (\ref{eqn:appropriate-function-successor}).

    Finally, let $\alpha$ be limit and let $l$ be an index of a function which is $\beta$-app\-ro\-pria\-te for each $\beta < \alpha$. For each $m = \mathtt{c}\left(k,\sharp \alpha^\prime\right)$ where $k \in \omega$ and $\alpha^\prime < \alpha$, let $\UCF\left( g\left( \sharp \alpha , l \right), m \right)$ be a $\Sigma_\alpha$-index of a sentence logically equivalent to the sentence $\ips\left(\UCF\left(\UCF\left( l, \sharp \alpha^\prime \right), k \right)\right)$ (here the property \ref{item-cif-basic-5} is used). Otherwise, let $\UCF\left( g\left( \sharp \alpha , l \right), m \right)$ be the $\Sigma_\alpha$-index of the sentence $\bot$. Then, indeed, $\ips \left(\UCF\left( g\left( \sharp \alpha , l \right), m \right)\right)$ is true if and only if $m$ is of the form $m = \mathtt{c}\left(k,\sharp \alpha^\prime\right)$ where $k \in \omega$, $\alpha^\prime < \alpha$, and $k \in H \left( \alpha^\prime \right)$; the latter is equivalent to $m \in H\left(\alpha\right)$, which is, in turn, equivalent to the fact that $\mathtt{c}(m,\sharp \alpha) \in H \left(\omega^\omega \right)$.

%%%

\end{document}